\theoremstyle{plain}
\newtheorem{theorem}{Theorem}[section]
\newtheorem{proposition}[theorem]{Proposition}
\newtheorem{lemma}[theorem]{Lemma}
\theoremstyle{definition}
\theoremstyle{remark}
\renewcommand{\thefootnote}{\arabic{footnote}}
\newcommand{\C}[1]{\ensuremath{{\mathcal C}^{#1}}} 
\def\R{\mathbb R}
\def\C{\mathbb C}
\def\al{\alpha}
\def\om{\omega}
\def\Om{\Omega}
\def\de{\delta}
\def\De{\Delta} 
\def\si{\sigma}
\def\lam{\lambda}
\def\vphi{\varphi}
\def\ep{\epsilon}
\def\na{\nabla}
\def\pa{\partial}
\def\la{\langle} 
\def\ra{\rangle} 
\def\lt{\left}
\def\rt{\right}
\def\o{\overline}
\def\i0i{\int_0^\infty}
\numberwithin{equation}{section}
\title{A sharp Adams inequality in dimension four and its extremal functions}
\author{Van Hoang Nguyen\footnote{
Institut de Math\'ematiques de Toulouse, Universit\'e Paul Sabatier, 118 Route de Narbonne, 31062 Toulouse c\'edex 09, France.}
}
\begin{document}
\maketitle


\renewcommand{\thefootnote}{}

\footnote{Email: \href{mailto: Van Hoang Nguyen <van-hoang.nguyen@math.univ-toulouse.fr>}{van-hoang.nguyen@math.univ-toulouse.fr}}

\footnote{2010 \emph{Mathematics Subject Classification\text}: 46E35.}

\footnote{\emph{Key words and phrases\text}: Adams inequality, blow-up analysis, sharp constant, extremal functions, regularity theory.}

\renewcommand{\thefootnote}{\arabic{footnote}}
\setcounter{footnote}{0}

\begin{abstract}
Let $\Om$ be a smooth oriented bounded domain in $\R^4$, $H_0^2(\Om)$ be the Sobolev space, and $\lambda_1(\Om)= \inf \{\|\Delta u\|_2^2 : u\in H_0^2(\Om), \|u\|_2 =1\}$ be the first eigenvalue of the bi-Laplacian operator $\Delta^2$ on $\Om$. For $\alpha \in [0,\lambda_1(\Om))$, we define $\|u\|_{2,\alpha}^2 = \|\Delta u\|_2^2 - \alpha \|u\|_2^2$, for $u \in H_0^2(\Om)$. In this paper, we will prove the following inequality
\[
\sup_{u\in H_0^2(\Om),\, \|u\|_{2,\alpha} \leq 1} \int_{\Om} e^{32 \pi^2 u(x)^2} dx < \infty.
\]
This strengthens a recent result of Lu and Yang \cite{LuYang}. We also show that there exists a function $u^*\in H_0^2(\Om)\cap C^4(\o{\Om})$ such that $\|u^*\|_{2,\alpha} =1$ and the supremum above is attained by $u^*$. Our proofs are based on the blow-up analysis method.

\end{abstract}

\section{Introduction}
Let $\Om$ be a smooth bounded domain in $\R^n$. The Sobolev inequality says that the embedding $W_0^{k,p}(\Om) \hookrightarrow L^{\frac{np}{n-kp}}(\Om)$ holds if $p < n/k$, where $W_0^{k,p}(\Om)$ denotes the Sobolev space of functions vanishing on boundary $\pa \Om$ together their derivatives of order less than $k-1$. Such inequality plays an important role in many branch of mathematics such as analysis, geometric, partial differential equations, calculus of variations, etc.  However, when $p =n/k$ the embedding  $W_0^{k,n/k}(\Om) \hookrightarrow L^{\infty}(\Om)$ does not holds. In this case, the Moser--Trudinger and Adams inequalities are perfect replacement. The Moser--Trudinger inequality was established independently by ${\rm Yudovi\check{c}}$ \cite{Y1961}, ${\rm Poho\check{z}aev}$ \cite{P1965} and Trudinger \cite{T1967}. This inequality was sharpened by Moser \cite{M1970} by finding its sharp constant. This sharp form asserts that the existence of a constant $C_0 >0$ such that
\begin{equation}\label{eq:MT}
\frac1{|\Om|} \int_\Om \exp(\beta |f(x)|^{\frac n{n-1}}) dx \leq C_0,
\end{equation}
for any $\beta \leq \beta_0 = n \om_{n-1}^{1/(n-1)}$ where $\om_{n-1}$ denotes the surface area of the unit sphere of $\R^n$, for any bounded domain $\Om$ and for any function $f\in W_0^{1,n}(\Om)$ with $\|\na f\|_n \leq 1$. If $\beta >\beta_0$ then the above inequality does not hold with uniform $C_0$ independent of $f$. Moser--Trudinger is a crucial tool in studying the partial differential equation inequality with exponential nonlinearity. Because of its importance, there are many generalization of Moser--Trudinger inequality, such as Moser--Trudinger inequality on Heisenberg group, on complex sphere or on compact Riemannian manifold \cite{CohnLu2001,CohnLu2004,Li2001}. It was also extended to entire Euclidean space by Ruf \cite{Ruf2005} for dimension two and by Li and Ruf \cite{LiRuf2008} for any dimension or entire Heisenberg group by Lam and Lu \cite{LamLu2012*}, or on hyperbolic space by Wang and Ye \cite{WY2012}. In \cite{TZ2000}, Tian and Zhu proved a Moser--Trudinger type inequality for alomost plurisubharmonic functions on any K\"ahler-Einstein manifolds with positive curvature.

The existence of the extremal function for Moser--Trudinger inequality was first proved by Carleson and Chang \cite{CC1986} for the unit ball in $\R^n$. In \cite{Flucher1992}, Flucher proved the existence of extremal function for Moser--Trudinger inequality for any smooth domain in $\R^2$. This result was then extended to any dimension by Lin \cite{Lin1996}. The existence of extremal function for Moser--Trudinger inequality on compact Riemannian manifold was studied by Li \cite{Li2005}. We refer the reader to \cite{Csato2015,Csato2016,doO2014,doOSouza,LiN2007,LiRuf2008,Ruf2005,WY2012,Yang2006,Yang2006a,Yang2015,Yang2017,Yang2017a} for more existence results of extremal functions for Moser--Trudinger type inequalities.

Suggesting by the concentration--compactness principle due to Lions \cite{Lions1985}, Adimurthi and Druet established in \cite{AD2004} the following generalization of Moser--Trudinger inequality on any bounded domain $\Om\subset \R^2$
\begin{equation}\label{eq:AdiDruet}
\sup_{u\in H_0^1(\Om),\, \|\na u\|_2 \leq 1} \int_\Om e^{4\pi(1 + \alpha \|u\|_2^2) u^2} dx < \infty,
\end{equation}
for any $0 \leq \alpha < \lam(\Om)$, where $\lam(\Om) = \inf_{u\in H_0^1(\Om),\, \|u\|_2 \leq 1} \|\na u\|_2^2$ is the first eigenvalue of Laplace operator $-\Delta$. The existence of extremal function for \eqref{eq:AdiDruet} was proved by Yang in \cite{Yang2006a}. This result was extended by Yang \cite{Yang2006,Yang2007} to the cases of high dimension and compact Riemannian surfaces, by Lu and Yang \cite{LY2009} and Zhu \cite{Zhu2014} to the version of $L^-$norm, by Souza and do \'O \cite{doO2014,doOSouza} to the whole Euclidean space, and by Tintarev \cite{Tintarev} to the following form
\begin{equation}\label{eq:Tintarev}
\sup_{u\in H_0^1(\Om), \, \|\na u\|_2^2 -\alpha \|u\|_2 \leq 1} \int_\Om e^{4\pi u^2} dx < \infty,
\end{equation}
with $0\leq \alpha < \lambda(\Om)$. Evidently, \eqref{eq:Tintarev} implies \eqref{eq:AdiDruet}. In \cite{Yang2015}, Yang generalized \eqref{eq:Tintarev} to the cases that large eigenvalues are involved, as well as to the manifold case. The existence of extremal functions for \eqref{eq:Tintarev} also obtained in \cite{Yang2015}. In \cite{Yang2017}, Yang and Zhu studied the singular version of \eqref{eq:Tintarev}. They proved the existence of extremal functions for the following singular Moser--Trudinger inequality
\begin{equation}\label{eq:sigularTintarev}
\sup_{u\in H_0^1(\Om), \, \|\na u\|_2^2 -\alpha \|u\|_2 \leq 1} \int_\Om \frac{e^{4\pi(1-\beta) u^2}}{|x|^{2\beta}} dx < \infty, \quad \alpha < \lambda(\Om)
\end{equation}
where $\Om$ is a smooth bounded domain in $\R^2$ containing the origin in its interior and $0 \leq \beta < 1$. The same existence result for the singular Moser--Trudinger inequality on whole Euclidean space was recently proved by Yang and Zhu in \cite{Yang2017a}.

Adams inequality is the version of higher order of derivatives of Moser--Trudinger inequality. The study of this inequality was started by the work of Adams \cite{Adams1988}. To state Adams inequality, we use the symbol $\na^mu$ with $m$ is a positive integer, to denote the $m^{\rm th}$ order gradient for $u\in C^m$, the class of $m^{\rm th}$ order differentiable functions,
\[
\na^m u = 
\begin{cases}
\Delta^{m/2} u&\mbox{if $m$ even,}\\
\na\Delta^{(m-1)/2} u&\mbox{if $m$ odd,}
\end{cases}
\]
where $\na$ and $\De$ denotes the usual gradient operator and usual Laplacian respectively. Adams proved in \cite{Adams1988} that for any positive integer $m$ less than $n$, there exists a constant $C_0(n,m)$ such that for any bounded domain $\Om \subset \R^n$, it holds
\begin{equation}\label{eq:Adams}
\sup_{u\in W_0^{m,\frac nm}(\Om),\, \|\na^m u\|_{\frac nm} \leq 1} \frac{1}{|\Om|} \int_\Om \exp(\beta |u|^{\frac n{n-m}}) dx \leq C_0(n,m),
\end{equation}
for any $\beta \leq \beta(n,m)$ with
\[
\beta(n,m) =
\begin{cases}
\frac n{\om_{n-1}} \lt[\frac{\pi^{n/2} 2^m \Gamma(\frac{m+1}2)}{\Gamma(\frac{n-m+1}2)}\rt]^{\frac n{n-m}} &\mbox{if $m$ odd,}\\
\frac n{\om_{n-1}} \lt[\frac{\pi^{n/2} 2^m \Gamma(\frac{m}2)}{\Gamma(\frac{n-m}2)}\rt]^{\frac n{n-m}} &\mbox{if $m$ even.}
\end{cases}
\]
Furthermore, for $\beta > \beta(n,m)$ the supremum above will be infinite. Notice that when $m=1$, \eqref{eq:Adams} reduces to Moser--Trudinger inequality \eqref{eq:MT}.

Remark that the work of Moser and of Carleson and Chang was based on the rearrangement argument to reduce problem to the one-dimensional problem. However, we can not adapt this symmetrization technique in the case $m\geq 2$ since we do not know whether the $L^{\frac nm}$ norm of the $m^{\rm th}$ gradient of a function decreases under the rearrangement operator. In order to establish \eqref{eq:Adams}, Adams use the representation of $u$ in terms of its gradient function $\na^m u$ using a convolution operator, and then apply O'Neil's idea \cite{O'Neil1963} of rearrangement of convolution of two functions together with the idea which originally goes back to Garcia. Such an argument avoids in dealing with the issue of $L^{\frac nm}$ norm preserving of the gradient of the rearranged functions. This idea has also been developed to derive the sharp Adams inequality on Riemannian manifolds without boundary by Fontana \cite{Fontana1993}, on the measure spaces by Fontana and Morpurgo \cite{FM2011}. The sharp Adams inequality was also generalized to whole Euclidean space in the works of Fontana and Morgurgo \cite{FM2015}, of Lam and Lu \cite{LamLu2012d,LamLu2013} and of Ruf and Sani \cite{RufSani2013}. The sharp Adams inequality was recently established on the hyperbolic spaces by Karmakar and Sandeep \cite{KS2016}. 

It remains an open problem whether Adams inequality has an extremal function. Unlike in Moser--Trudinger inequality with first order derivative, we can not adapt Carleson--Chang's idea \cite{CC1986} of symmetrization to establish the existence of extremal function for inequalities of higher order derivatives. It is still a rather difficult problem to answer the above question in the most generality. One interesting case of the above question when $n=4$ and $m=2$ was addressed in \cite{LuYang}. Let $\Om \subset \R^4$ denote a smooth oriented bounded domain, $H_0^2(\Om)$ denote the Sobolev space which is completion of the space of compactly supported smooth functions in $\Om$ under the Dirichlet norm $\|u\|_{H_0^2(\Om)} = \|\Delta u\|_2$. Then Adams inequality in the case $n=4$ and $m=2$ states that
\begin{equation}\label{eq:Adams4dim}
\sup_{u\in H_0^2(\Om),\, \|\Delta u\|_2 \leq 1} \int_\Om e^{\gamma u^2} dx < \infty,
\end{equation}
for any $\gamma \leq 32 \pi^2$. The existence of extremal function for inequality \eqref{eq:Adams4dim} was proved by Lu and Yang \cite{LuYang}. Even, Lu and Yang established in \cite{LuYang} an improvement of \eqref{eq:Adams4dim} in spirit of Adimurthi and Druet (for improvement of Moser--Trudinger inequality \eqref{eq:AdiDruet}). Let $\lambda_1(\Om)$ denote the first eigenvalue of the bi-Laplacian operator $\Delta^2$ on $\Om$, i.e., 
\[
\lambda_1(\Om) = \inf_{u\in H_0^2(\Om),\, u\not\equiv 0} \, \frac{\|\Delta u\|_2^2}{\|u\|_2^2}.
\]
An easy application of the variational method shows that $\lambda_1(\Om) >0$ and is attained. It was proved by Lu and Yang that
\begin{equation}\label{eq:LuYang}
\sup_{u\in H_0^2(\Om),\, \|\Delta u\|_2 \leq 1} \int_\Om e^{32 \pi^2 q(\|u\|_2^2) u^2} dx < \infty,
\end{equation}
where $q(t) = 1 + a_1 t+ \cdots + a_k t^k$, $k\geq 1$ is a polynomial of order $k$ in $\R$ with $0\leq a_1 < \lambda_1(\Om)$, $0\leq a_2 \leq \lam_1(\Om) a_1$, \ldots, $0\leq a_k \leq \lam_1(\Om) a_{k-1}$. Furthermore, if $a_1\geq \lam_1(\Om)$ then the supremum above will be infinite.

The existence of extremal functions for inequality \eqref{eq:LuYang} was also studied in \cite{LuYang}. It was proved that there exists a strictly positive constant $\ep_0 < \lam_1(\Om)$ depending only on $\Om$ such that when $0\leq a_1 < \lambda_1(\Om)$, $0\leq a_2 \leq \lam_1(\Om) a_1$, \ldots, $0\leq a_k \leq \lam_1(\Om) a_{k-1}$, we can find $u^* \in H_0^2(\Om)\cap C^4(\o{\Om})$ such that $\|\Delta u^*\|_2 =1$ and 
\[
\int_\Om e^{32 \pi^2 q(\|u^*\|_2^2) {u^*}^2} dx = \sup_{u\in H_0^2(\Om),\, \|\Delta u\|_2 \leq 1} \int_\Om e^{32 \pi^2 q(\|u\|_2^2) u^2} dx.
\]
Obviously, this implies the existence of extremal functions for Adams inequality \eqref{eq:Adams4dim}.

The first aim of this paper is to strengthen Adams inequality \eqref{eq:Adams4dim} in the spirit of Tintarev for the improvement of Moser--Trudinger inequality \eqref{eq:Tintarev}.To do this, let us define for any $0\leq \alpha < \lam_1(\Om)$,
\[
\|u\|_{2,\alpha}^2 = \|\Delta u\|_2^2 -\alpha \|u\|_2^2,\qquad u\in H_0^2(\Om).
\]
Note that $\|\cdot\|_{2,\alpha}$ is a norm on $H^2_0(\Om)$ which is equivalent to $\|\cdot\|_{H_0^2(\Om)}$. In this paper, we will prove the following inequality.

\begin{theorem}\label{Main1}
Let $\Om \subset \R^4$ be a smooth, oriented bounded domain, $\lam_1(\Om)$ be the first eigenvalue of bi-Laplacian operator $\De^2$ on $\Om$. Then for any $\alpha$ with $0 \leq \alpha < \lam_1(\Om)$, we have
\begin{equation}\label{eq:Maininequality}
\sup_{u\in H_0^2(\Om), \, \|u\|_{2,\al} \leq 1} \int_{\Om} e^{32\pi^2 u^2} dx < \infty.
\end{equation}
\end{theorem}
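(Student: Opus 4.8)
The plan is to deduce \eqref{eq:Maininequality} from a blow-up analysis of maximizers of the associated \emph{subcritical} functionals. For $0<p<32\pi^2$ put $\mathcal S_p=\sup\{\int_\Om e^{pu^2}\,dx:u\in H_0^2(\Om),\ \|u\|_{2,\al}\le1\}$. Since $\mathcal S_p$ is nondecreasing in $p$ and, by monotone convergence, $\int_\Om e^{32\pi^2u^2}=\lim_{p\uparrow32\pi^2}\int_\Om e^{pu^2}\le\lim_{p\uparrow32\pi^2}\mathcal S_p$ for every admissible $u$, one has $\sup_{\|u\|_{2,\al}\le1}\int_\Om e^{32\pi^2u^2}=\lim_{p\uparrow32\pi^2}\mathcal S_p$, so it is enough to bound $\mathcal S_p$ uniformly for $p<32\pi^2$. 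The first step is to show that each $\mathcal S_p$, $p<32\pi^2$, is finite and attained. On the constraint set $\|\De u\|_2^2=\|u\|_{2,\al}^2+\al\|u\|_2^2\le\lam_1(\Om)/(\lam_1(\Om)-\al)$, so a maximizing sequence $u_j$ is bounded in $H_0^2(\Om)$ and, up to a subsequence, $u_j\rightharpoonup u$ weakly, $u_j\to u$ in every $L^q(\Om)$ and a.e. Here the concentration--compactness principle of P.-L.~Lions applies: total concentration of $u_j$ would force $\|u_j\|_2\to0$, hence $\|\De u_j\|_2^2\to1$, so that \eqref{eq:Adams4dim} gives $\int_\Om e^{pu_j^2}\to|\Om|$, which is impossible since $\mathcal S_p>|\Om|$ (any $u\not\equiv0$ does strictly better); in the remaining case one obtains $\int_\Om e^{qu_j^2}$ uniformly bounded for some $q\in(p,32\pi^2)$, whence $\{e^{pu_j^2}\}$ is equi-integrable and $\mathcal S_p=\int_\Om e^{pu^2}<\infty$ is attained by some $u_p$; a scaling argument forces $\|u_p\|_{2,\al}=1$, and $u_p\not\equiv0$.

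Next I would write down the Euler--Lagrange equation. With $\lam_p:=\int_\Om u_p^2e^{pu_p^2}\,dx>0$,
\[
\De^2u_p-\al u_p=\frac1{\lam_p}u_pe^{pu_p^2}\ \text{ in }\Om,\qquad u_p=\pa_\nu u_p=0\ \text{ on }\pa\Om ,
\]
and testing against $u_p$ gives $\lam_p^{-1}\int_\Om u_p^2e^{pu_p^2}=\|u_p\|_{2,\al}^2=1$. Since the right-hand side lies in every $L^q(\Om)$ by \eqref{eq:Adams4dim}, elliptic $W^{4,q}$-estimates and bootstrapping yield $u_p\in C^4(\o\Om)$. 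Put $c_p:=\max_{\o\Om}|u_p|=|u_p(x_p)|$ and pass to a sequence $p\uparrow32\pi^2$. If $\limsup c_p<\infty$, then the right-hand side of the equation is bounded in every $L^q(\Om)$, elliptic estimates give $u_p\to u_0$ in $C^4(\o\Om)$ along a subsequence with $\|u_0\|_{2,\al}\le1$, and $\mathcal S_p=\int_\Om e^{pu_p^2}$ converges to $\int_\Om e^{32\pi^2u_0^2}$; thus $\sup_{\|u\|_{2,\al}\le1}\int_\Om e^{32\pi^2u^2}=\int_\Om e^{32\pi^2u_0^2}<\infty$ and \eqref{eq:Maininequality} holds. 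It therefore remains to treat the blow-up case $c_p\to\infty$.

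Assume $c_p\to\infty$. Testing the equation against $\varphi\in C_c^\infty(\Om)$ and using that $\lam_p^{-1}u_pe^{pu_p^2}$ concentrates near $x_p$, where $u_p\approx c_p$, the weak limit $u_0$ satisfies $\De^2u_0-\al u_0=0$ in $\Om$, which forces $u_0\equiv0$ because $\al<\lam_1(\Om)$; hence $\|\De u_p\|_2^2=1+\al\|u_p\|_2^2\to1$. A lower bound on the mass that $\lam_p^{-1}u_p^2e^{pu_p^2}$ must carry near any concentration point, together with a boundary estimate, shows the blow-up occurs at a single interior point $x_0=\lim x_p\in\Om$. Choosing the scale $r_p$ by $r_p^4=\lam_pc_p^{-2}e^{-pc_p^2}$, one proves $r_p\to0$ and, after the necessary uniform estimates, $c_p^{-1}u_p(x_p+r_py)\to1$ and $\psi_p(y):=c_p\big(u_p(x_p+r_py)-c_p\big)\to\psi_0$ in $C^4_{\mathrm{loc}}(\R^4)$, where
\[
\De^2\psi_0=e^{64\pi^2\psi_0}\ \text{ in }\R^4,\qquad \psi_0(0)=0=\max_{\R^4}\psi_0,\qquad \int_{\R^4}e^{64\pi^2\psi_0}\,dy\le1 .
\]
Lin's classification of finite-mass solutions of this fourth-order Liouville equation identifies $\psi_0$ explicitly; matched with the exterior limit $c_pu_p\to G$ in $C^4_{\mathrm{loc}}(\o\Om\setminus\{x_0\})$, where $G$ is the Green function of $\De^2-\al$ on $\Om$ with pole $x_0$ under the Dirichlet conditions, so that $G(x)=-\tfrac1{8\pi^2}\log|x-x_0|+A(x)$ with $A$ of class $C^2$ near $x_0$, this pins down $\int_{\R^4}e^{64\pi^2\psi_0}=1$ and the asymptotics relating $\lam_p$, $c_p$ and $A(x_0)$.

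The last step — and the one I expect to be the main obstacle — is the \textbf{neck analysis}: one must control $\int_{B_\delta(x_p)\setminus B_{Rr_p}(x_p)}e^{pu_p^2}\,dx$ and show that no exponential mass escapes into the intermediate ``neck'' region $Rr_p\le|x-x_p|\le\delta$, by means of a Pohozaev-type identity for the equation on small balls centred at $x_p$ together with capacity (truncation) estimates for $u_p$. The lower-order term $\al u_p$, absent from the case $\al=0$ treated in \cite{LuYang}, has to be carried through all of these estimates; it is controlled using $u_0\equiv0$ and $c_pu_p\to G$. Adding the contributions of the bubble $B_{Rr_p}(x_p)$, of the neck, and of the exterior $\Om\setminus B_\delta(x_p)$ — on which $e^{pu_p^2}\to1$ uniformly, contributing at most $|\Om|$ — yields a finite Carleson--Chang-type bound, of the form $\limsup_{p\uparrow32\pi^2}\int_\Om e^{pu_p^2}\,dx\le|\Om|+\tfrac{\pi^2}{6}e^{1+32\pi^2A(x_0)}<\infty$. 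Since $\mathcal S_p=\int_\Om e^{pu_p^2}$ and $\sup_{\|u\|_{2,\al}\le1}\int_\Om e^{32\pi^2u^2}=\lim_{p\uparrow32\pi^2}\mathcal S_p$, this completes the proof of \eqref{eq:Maininequality}. The remaining ingredients — existence and regularity of $u_p$, the dichotomy, the exterior Green-function limit, and the appeal to Lin's classification — are comparatively routine.
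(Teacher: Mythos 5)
Your proposal follows essentially the same route as the paper: finiteness and attainment of the subcritical suprema via a Lions-type concentration--compactness argument, the dichotomy between a bounded maximum (compactness, done) and blow-up, interior localization of the blow-up point, the rescaled bubble classified by Lin/Wei--Xu, the exterior Green-function limit, and a Pohozaev-plus-capacity (neck) estimate giving a finite Carleson--Chang-type upper bound $|\Om|+\tfrac{\pi^2}{6}e^{c+32\pi^2A_p}$ in the blow-up case. The only differences are cosmetic (the paper normalizes the inner expansion by $b_\ep=\lam_\ep/\int_\Om|u_\ep|e^{\al_\ep u_\ep^2}$ rather than by $c_\ep$, proving $\lim c_\ep/b_\ep=1$ only a posteriori, and the correct additive constant in the bound is $5/3$ rather than $1$), and neither affects the finiteness conclusion.
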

Remark that when $\al =0$, \eqref{eq:Maininequality} reduces to \eqref{eq:Adams4dim}. Moreover, for any $0\leq \alpha < \lam_1(\Om)$ and $u \in H_0^2(\Om)$ such that $\|\Delta u\|_2 \leq 1$ denote $v = u/\|u\|_{2,\alpha}$, then $u^2 \leq v^2$, and $\|v\|_{2,\alpha} =1$, thus \eqref{eq:Maininequality} is indeed stronger than Adams inequality \eqref{eq:Adams4dim}. The next result shows that \eqref{eq:Maininequality} is stronger than the inequality of Lu and Yang \eqref{eq:LuYang}.

\begin{proposition}\label{imply}
Theorem \ref{Main1} implies the inequality \eqref{eq:LuYang}.
\end{proposition}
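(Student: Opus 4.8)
The plan is to deduce \eqref{eq:LuYang} from Theorem \ref{Main1} by a pointwise rescaling of the test function. Fix a polynomial $q(t) = 1 + a_1 t + \cdots + a_k t^k$ satisfying the stated coefficient conditions, and let $u \in H_0^2(\Om)$ with $\|\Delta u\|_2 \le 1$. Put $t := \|u\|_2^2$ and $v := \sqrt{q(t)}\,u$. Since $q(t)$ is a nonnegative real number (indeed $q(t)\ge q(0)=1$ because $q$ has nonnegative coefficients), $v$ is just a scalar multiple of $u$, hence $v \in H_0^2(\Om)$, and pointwise $v(x)^2 = q(\|u\|_2^2)\,u(x)^2$. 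Consequently the two exponential integrands coincide: $\int_\Om e^{32\pi^2 v^2}\,dx = \int_\Om e^{32\pi^2 q(\|u\|_2^2) u^2}\,dx$. Therefore it is enough to produce a single constant $\alpha = \alpha(q,\Om) \in [0,\lam_1(\Om))$, independent of $u$, such that $\|v\|_{2,\alpha} \le 1$ for every such $u$; then the right-hand side above is bounded by $\sup_{\|w\|_{2,\alpha}\le 1}\int_\Om e^{32\pi^2 w^2}\,dx$, which is finite by Theorem \ref{Main1}, and taking the supremum over $u$ yields \eqref{eq:LuYang}.

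Next I would reduce the required estimate to an elementary one-variable inequality. A direct computation gives $\|v\|_{2,\alpha}^2 = q(t)\big(\|\Delta u\|_2^2 - \alpha t\big)$; using $q(t)\ge 0$ and $\|\Delta u\|_2^2 \le 1$, this is at most $g(t) := q(t)(1-\alpha t)$. Moreover, by the definition of $\lam_1(\Om)$ and $\|\Delta u\|_2\le 1$ we have $\lam_1(\Om)\,t \le \|\Delta u\|_2^2 \le 1$, so $t$ ranges only over $[0, 1/\lam_1(\Om)]$. Thus everything comes down to choosing $\alpha < \lam_1(\Om)$ so that $g(t)\le 1$ on $[0,1/\lam_1(\Om)]$.

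To carry this out I would exploit the coefficient hypotheses through the auxiliary polynomial $h(t) := q(t)\big(1 - \lam_1(\Om)\,t\big)$. Its constant term is $1$; the coefficient of $t$ equals $a_1 - \lam_1(\Om) < 0$; and for $2\le j\le k$ the coefficient of $t^j$ equals $a_j - \lam_1(\Om)\,a_{j-1} \le 0$, while the coefficient of $t^{k+1}$ is $-\lam_1(\Om)\,a_k \le 0$. Hence $h(t) \le 1 - (\lam_1(\Om)-a_1)\,t$ for all $t\ge 0$. Writing $1-\alpha t = (1-\lam_1(\Om)\,t) + (\lam_1(\Om)-\alpha)\,t$ gives $g(t) = h(t) + (\lam_1(\Om)-\alpha)\,t\,q(t) \le 1 - (\lam_1(\Om)-a_1)\,t + (\lam_1(\Om)-\alpha)\,t\,q(t)$. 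Since $q$ has nonnegative coefficients it is nondecreasing on $[0,\infty)$, so on the relevant interval $q(t)\le M := q(1/\lam_1(\Om)) < \infty$. Taking $\alpha := \lam_1(\Om) - (\lam_1(\Om)-a_1)/M$ makes the bracket $(\lam_1(\Om)-\alpha)M - (\lam_1(\Om)-a_1)$ vanish, so $g(t)\le 1$ on $[0,1/\lam_1(\Om)]$; and because $M \ge q(0) = 1$ this $\alpha$ lies in $[a_1,\lam_1(\Om)) \subseteq [0,\lam_1(\Om))$, which is admissible for Theorem \ref{Main1}.

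The one delicate point, and the place where the argument could fail if the hypotheses were weaker, is precisely that $\alpha$ must be taken \emph{strictly} below $\lam_1(\Om)$: the bounds $a_j \le \lam_1(\Om)\,a_{j-1}$ for $j\ge 2$ only give that $h$ has nonpositive higher coefficients (so $\alpha = \lam_1(\Om)$ would work trivially), and it is the strict inequality $a_1 < \lam_1(\Om)$ that furnishes the term $-(\lam_1(\Om)-a_1)\,t$ with a strictly negative coefficient, giving the slack needed to push $\alpha$ below $\lam_1(\Om)$ while still controlling $g$ against the polynomial factor $q$. Everything else is routine bookkeeping.
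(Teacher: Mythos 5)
Your proof is correct and takes essentially the same route as the paper: both arguments reduce \eqref{eq:LuYang} to Theorem \ref{Main1} by producing a single $\alpha=\alpha(q,\Om)\in[0,\lam_1(\Om))$ for which $q(t)\le(1-\alpha t)^{-1}$ on $[0,1/\lam_1(\Om)]$, and then rescaling $u$ to a function whose $\|\cdot\|_{2,\alpha}$-norm is at most $1$ and whose pointwise square dominates $q(\|u\|_2^2)u^2$. The only (harmless) difference is in the elementary verification of that polynomial inequality: the paper first majorizes $q(t)$ by $p(\lam_1(\Om)t)$ with $p(t)=1+at+\cdots+at^k$, $a=a_1/\lam_1(\Om)$, and chooses $b\in(a,1)$ close to $1$, whereas you work directly with $h(t)=q(t)(1-\lam_1(\Om)t)$ and use that all its nonconstant coefficients are nonpositive, which exploits the hypotheses $a_j\le\lam_1(\Om)a_{j-1}$ somewhat more transparently.
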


The second result of this paper is the existence of the extremal functions for the inequality \eqref{eq:Maininequality}. More precisely, we prove the following result.

\begin{theorem}\label{Existence}
Let $\Om \subset \R^4$ be a smooth, oriented bounded domain, $\lam_1(\Om)$ be the first eigenvalue of bi-Laplacian operator $\De^2$ on $\Om$. Then for any $\alpha$ with $0 \leq \alpha < \lam_1(\Om)$, there exists $u^* \in H_0^2(\Om) \cap C^4(\o{\Om})$ such that $\|u^*\|_{2,\al} =1$ and 
\[
\int_{\Om} e^{32\pi^2 {u^*}^2} dx = \sup_{u\in H_0^2(\Om), \, \|u\|_{2,\al} \leq 1} \int_{\Om} e^{32\pi^2 u^2} dx.
\]
\end{theorem}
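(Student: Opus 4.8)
The plan is to run the standard blow-up (concentration--compactness) scheme for critical variational problems, adapted to the bi-Laplacian and to the $\al$-modified norm $\|\cdot\|_{2,\al}$, with Theorem~\ref{Main1} supplying the underlying sharp inequality. First, for $\ep\in(0,32\pi^2)$ consider the subcritical problem
\[
S_\ep=\sup_{u\in H_0^2(\Om),\ \|u\|_{2,\al}\le1}\int_\Om e^{(32\pi^2-\ep)u^2}\,dx .
\]
Since the exponent is strictly subcritical, along a maximizing sequence the integrands are bounded in $L^p(\Om)$ for some $p>1$ (apply \eqref{eq:Adams4dim} to $u_k/\|u_k\|_{2,\al}$, using $\|\cdot\|_{2,\al}\le\|\De\cdot\|_2$), hence uniformly integrable; together with the compact embedding $H_0^2(\Om)\hookrightarrow L^2(\Om)$ this produces a maximizer $u_\ep$ with $\|u_\ep\|_{2,\al}=1$, which (after changing $u_\ep$ into $-u_\ep$ if necessary) we may take with $c_\ep:=\|u_\ep\|_{L^\infty(\Om)}=\max_{\o\Om}u_\ep$. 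Its Euler--Lagrange equation is
\[
\De^2u_\ep-\al u_\ep=\lam_\ep^{-1}\,u_\ep\,e^{(32\pi^2-\ep)u_\ep^2}\ \text{ in }\Om,\qquad u_\ep=\partial_\nu u_\ep=0\ \text{ on }\partial\Om,
\]
where $\lam_\ep=\int_\Om u_\ep^2 e^{(32\pi^2-\ep)u_\ep^2}\,dx$. One checks $S_\ep\to S:=\sup_{\|u\|_{2,\al}\le1}\int_\Om e^{32\pi^2u^2}\,dx$ and clearly $S>|\Om|$. Let $x_\ep$ attain $c_\ep$, $x_\ep\to x_0\in\o\Om$. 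If $\limsup_{\ep\to0}c_\ep<\infty$, the right-hand side above is bounded in every $L^p$, so $L^p$- and Schauder estimates bound $u_\ep$ in $C^4(\o\Om)$; along a subsequence $u_\ep\to u^*$ in $C^4(\o\Om)$ with $\|u^*\|_{2,\al}=1$ and $\int_\Om e^{32\pi^2(u^*)^2}\,dx=S>|\Om|$, so $u^*\not\equiv0$ and $u^*$ is the desired extremal. It remains to exclude the blow-up alternative $c_\ep\to\infty$.

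Assume $c_\ep\to\infty$. Set $r_\ep>0$ by $r_\ep^4=\lam_\ep c_\ep^{-2}e^{-(32\pi^2-\ep)c_\ep^2}$ and rescale $\varphi_\ep(y)=c_\ep\big(u_\ep(x_\ep+r_\ep y)-c_\ep\big)$, $\eta_\ep(y)=u_\ep(x_\ep+r_\ep y)/c_\ep$. One shows $r_\ep\to0$, that $x_0\in\Om$ (the concentration does not reach $\partial\Om$), that $\eta_\ep\to1$ and $\varphi_\ep\to\varphi_0$ in $C^4_{\mathrm{loc}}(\R^4)$ where $\varphi_0$ is the entire solution of $\De^2\varphi_0=e^{64\pi^2\varphi_0}$ normalized by $\varphi_0(0)=0=\nabla\varphi_0(0)$ with $\int_{\R^4}e^{64\pi^2\varphi_0}\,dy=1$, and that the bubble carries unit energy:
\[
\lim_{L\to\infty}\ \lim_{\ep\to0}\ \lam_\ep^{-1}\int_{B_{Lr_\ep}(x_\ep)}u_\ep^2\,e^{(32\pi^2-\ep)u_\ep^2}\,dx=1 .
\]
Away from $x_0$ one then shows $c_\ep u_\ep\to G$ in $C^4_{\mathrm{loc}}(\o\Om\setminus\{x_0\})$, where $G$ is the Green function of $\De^2-\al$ on $\Om$ with pole $x_0$ and clamped boundary conditions $G=\partial_\nu G=0$ on $\partial\Om$, which near $x_0$ has the expansion $G(x)=-\tfrac1{8\pi^2}\log|x-x_0|+A_{x_0}+o(1)$ with $A_{x_0}$ a Robin-type constant depending on $\Om$, $\al$ and $x_0$. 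Combining the bubble regime with the Green-function regime through a Pohozaev identity for $\De^2-\al$ on small balls together with capacity/level-set estimates on the neck (as in Lu--Yang \cite{LuYang}, now carrying the $\al$-term along) produces the sharp upper bound
\[
S\ \le\ |\Om|+\frac{\pi^2}{6}\,e^{32\pi^2 A_{x_0}+\frac53}.
\]

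To contradict this, construct a Moser-type family $\phi_\ep\in H_0^2(\Om)$ consisting of a rescaled truncated bubble on $B_{L\delta_\ep}(x_0)$, a logarithmic interpolation on the annulus $B_{2\delta_\ep}(x_0)\setminus B_{L\delta_\ep}(x_0)$ matching $G$ up to lower order, and a suitable multiple of $G$ minus its singular part outside $B_{2\delta_\ep}(x_0)$, with the parameters tuned so that $\|\phi_\ep\|_{2,\al}=1$ exactly; here the term $-\al\|\phi_\ep\|_2^2$ must be expanded to the order $c_\ep^{-2}\log(1/\delta_\ep)$ at which it feeds the constant. A careful expansion then gives, for $\ep$ small,
\[
\int_\Om e^{32\pi^2\phi_\ep^2}\,dx\ >\ |\Om|+\frac{\pi^2}{6}\,e^{32\pi^2 A_{x_0}+\frac53},
\]
contradicting the upper bound. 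Hence $c_\ep$ is bounded and, by the previous step, the supremum in Theorem~\ref{Existence} is attained by some $u^*\in H_0^2(\Om)$ with $\|u^*\|_{2,\al}=1$. Finally $u^*$ solves $\De^2u^*-\al u^*=\mu^{-1}u^*e^{32\pi^2(u^*)^2}$ for some $\mu>0$, and since $e^{32\pi^2(u^*)^2}\in L^p(\Om)$ for every $p<\infty$ by Theorem~\ref{Main1}, elliptic $L^p$-theory gives $u^*\in W^{4,p}(\Om)$ for all $p$, hence $u^*\in C^{3,\gamma}(\o\Om)$; the right-hand side is then $C^{3,\gamma}$, so Schauder estimates (using smoothness of $\partial\Om$) upgrade this to $u^*\in C^4(\o\Om)$.

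The main obstacle is the blow-up step: since there is no maximum principle for $\De^2$, controlling $u_\ep$ on the neck region and, above all, extracting the \emph{precise} constant $\tfrac{\pi^2}{6}\,e^{32\pi^2 A_{x_0}+5/3}$ require delicate Pohozaev and capacity estimates, and the extra term $\al\|u\|_2^2$ has to be tracked consistently both in that upper bound and in the matching lower bound coming from the test functions --- this is exactly where the argument must go beyond the $\al=0$ case of \cite{LuYang}.
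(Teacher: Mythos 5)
Your overall strategy coincides with the paper's: subcritical maximizers, blow-up analysis with the rescaled bubble and the Green function $G_\al$ of $\De^2-\al$, a Pohozaev/capacity upper bound $|\Om|+\tfrac{\pi^2}{6}e^{32\pi^2A_p+5/3}$ under blow-up, exclusion of boundary concentration, a test-function family beating that bound, and elliptic regularity for the maximizer. The one step that does not work as written is your justification of the subcritical existence. You propose to apply the Adams inequality \eqref{eq:Adams4dim} to $u_k/\|u_k\|_{2,\al}$, ``using $\|\cdot\|_{2,\al}\le\|\De\cdot\|_2$''; but that inequality goes the wrong way: normalizing by the \emph{smaller} norm gives $\|\De(u_k/\|u_k\|_{2,\al})\|_2\ge1$ in general. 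Quantitatively, for $\|u_k\|_{2,\al}=1$ one only has $\|\De u_k\|_2^2\le(1-\al/\lam_1(\Om))^{-1}$, so after normalizing to $\|\De v_k\|_2=1$ the exponent becomes $(32\pi^2-\ep)\|\De u_k\|_2^2$, which exceeds $32\pi^2$ when $\al$ is close to $\lam_1(\Om)$ and $\ep$ is fixed; hence \eqref{eq:Adams4dim} yields no uniform $L^p$ bound on the integrands. The paper resolves exactly this point (its Propositions 2.1--2.2, flagged there as nontrivial) by first proving a Lions-type concentration--compactness principle adapted to $\|\cdot\|_{2,\al}$: if $u_k\rightharpoonup u_0$ weakly, then $e^{32\pi^2pu_k^2}$ is uniformly bounded in $L^1$ for every $p<(1-\|u_0\|_{2,\al}^2)^{-1}$; the case $u_0=0$ is handled separately using the compactness $\|u_k\|_2\to0$, which forces $\|\De u_k\|_2\to1$ and restores room below the critical exponent. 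You need this, or an equivalent device, to make the first step of your scheme rigorous on the whole range $0\le\al<\lam_1(\Om)$.

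Everything after that is the paper's argument in outline: your normalization of the bubble by $c_\ep$ rather than the paper's $b_\ep$ (which the paper only later shows satisfies $b_\ep\sim c_\ep$ via $\tau=1$), and your three-region test function versus the paper's two-region construction with $C^1$-matching at $r=R\ep$, are immaterial presentational differences. Once the subcritical step is repaired along the lines above, the proposal follows the paper's proof.
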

Note that when $\alpha =0$ we obtain the existence of extremal function for Adams inequality \eqref{eq:Adams4dim} which was already proved in \cite{LuYang}. Although, our inequality \eqref{eq:Maininequality} is stronger than the one of Lu and Yang \eqref{eq:LuYang}, however the existence result in Theorem \ref{Existence} does not imply the existence result for the inequality \eqref{eq:LuYang}. Also, contrary with the existence result of Lu and Yang, our Theorem \ref{Existence} gives the existence of extremal function for the inequality \eqref{eq:Maininequality} for any $0\leq \al < \lam_1(\Om)$.

We conclude this introduction by mentioning about the method of proof of our main Theorems. As usually, our method is based on the blow-up analysis method. We first establish a concentration-compactness lemma of Lion's type and using it to prove the existence of $u_\ep \in H_0^2(\Om)\cap C^4(\o{\Om})$, $\ep \in (0,32\pi^2)$ such that $\|u_\ep\|_{2,\al} =1$ and
\[
\int_{\Om} e^{32\pi^2 {u_\ep}^2} dx = \sup_{u\in H_0^2(\Om), \, \|u\|_{2,\al} \leq 1} \int_{\Om} e^{(32\pi^2-\ep) u^2} dx.
\]
Thus, the Euler--Lagrange equation of $u_\ep$ is given by
\[
\begin{cases}
\Delta^2 u_\ep = \frac1{\lambda_\ep} e^{(32\pi^2 -\ep) u_\ep^2} u_\ep + \al u_\ep&\mbox{in $\Om$,}\\
\|u_\ep\|_{2,\alpha} =1, u_\ep = \frac{\pa u_\ep}{\pa \nu} = 0&\mbox{on $\pa \Om$,}\\
\lambda_\ep = \int_{\Om} e^{(32\pi^2 -\ep)u_\ep^2} u_\ep^2 dx,
\end{cases}
\]
where $\nu$ denotes the outward unit normal vector to $\pa \Om$. Without loss of generality, let $c_\ep = \max_{\o{\Om}} |u_\ep| = u_\ep(x_\ep)$. If $c_\ep$ is bounded, by the standard regularity theory we obtain $u_\ep \to u^*$ in $C^4(\o{\Om})$ hence finishes our proof. If $c_\ep \to \infty$ (namely, the blow-up occurs) and $x_\ep \to p\in \o{\Om}$, by using Pohozaev type identity and elliptic estimates, we exclude the case $p\in \pa \Om$. We also show that $c_\ep u_\ep$ converges to some Green function weakly in $H_0^2(\Om)$ which then immediately leads to Theorem \ref{Main1}. We also prove an upper bound for functional $\int_\Om e^{32\pi^2 u^2} dx$ when blow-up occurs by using some capacity estimates. By constructing a sequence of test functions, we exclude the blow-up phenomena for the maximizing sequence of functional $\int_\Om e^{32\pi^2 u^2} dx$. This leads to the existence result in Theorem \ref{Existence}. We emphasize here that in our proof below, we do not require the sharp Adams inequality (i.e., $\gamma =32\pi^2$ in \eqref{eq:Adams4dim}), but only require the subcritical Adams inequality (i.e., $\gamma < 32\pi^2$ in \eqref{eq:Adams4dim}). We also would like to mention here that blow-up analysis technique have been already employed by numerous authors in relevant but quite different setting in dealing with Sobolev inequalities instead of Moser--Trudinger inequality. We refer the interested reader to the works \cite{Aubin,Druet,LiZhu,doO2014,doOSouza,Li2001,Li2005,LiN2007,LiRuf2008,LuYang,WY2012,Yang2006a,Yang2006,Yang2007,Yang2015,Yang2017,Yang2017a}, etc.

The rest of this paper is organized as follows. In section \S2, we give the existence of maximizers for subcritical functional. In section \S3 we analyse the asymptotic behavior of those maximizers functions. In section \S4, we obtain an upper bound for the critical functional under the assumption that blow-up occurs in the interior of $\Om$ by using some capacity estimates. We exclude the boundary bubble in section \S5. The proof of Theorem \ref{Main1} and Proposition \ref{imply} is given in section \S6. In section \S7, we construct a sequence of test functions to conclude the existence of extremal function for the critical functional and thus give the proof of Theorem \ref{Existence}.


\section{Extremals for the subcritical Adams inequality}
For any $\ep \in (0,32\pi^2)$, let us consider the subcritical problems
\begin{equation}\label{eq:subcriticalproblem}
C_\ep = \sup_{u\in H_0^2(\Om), \, \|u\|_{2,\alpha} \leq 1} \int_{\Om} e^{(32\pi^2 -\ep) u^2} dx.
\end{equation}
In this section, we mainly prove that $C_\ep < \infty$ and the subcritical problem \eqref{eq:subcriticalproblem} is attained. Noting that the existence of such extremals is nontrivial. In the proof, we need the following Lion's type \cite{Lions1985} concentration--compactness principle.
\begin{proposition}\label{Lions}
Let $\{u_j\}_j\subset H_0^2(\Om)$ be a sequence of functions such that $\| u_j\|_{2,\alpha} =1$ and $u_j \rightharpoonup u_0$ weakly in $H_0^2(\Om)$. Then for any $p< (1-\|u_0\|_{2,\alpha}^2)^{-1}$,
\[
\limsup_{j\to\infty} \int_{\Om} e^{32\pi^2 p u_j^2} dx < \infty.
\]
\end{proposition}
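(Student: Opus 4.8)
The plan is to follow the standard Lions-type argument adapted to the fourth-order setting, exploiting the subcritical Adams inequality \eqref{eq:Adams4dim} (which we may use freely, since the statement only needs $\gamma < 32\pi^2$). First I would write $u_j = u_0 + w_j$ where $w_j \rightharpoonup 0$ weakly in $H_0^2(\Om)$. The key quantitative input is the orthogonality-type expansion of the norm $\|\cdot\|_{2,\alpha}$: since $\|u_j\|_{2,\alpha}^2 = 1$ and (by weak convergence) $\|u_j\|_{2,\alpha}^2 = \|u_0\|_{2,\alpha}^2 + \|w_j\|_{2,\alpha}^2 + o(1)$, we get $\limsup_j \|w_j\|_{2,\alpha}^2 = 1 - \|u_0\|_{2,\alpha}^2 =: \tau$. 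The hypothesis $p < \tau^{-1}$ then means that for $j$ large, $p\,\|w_j\|_{2,\alpha}^2 \le \theta$ for some fixed $\theta < 1$.

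Next I would use the pointwise inequality $u_j^2 = (u_0 + w_j)^2 \le (1+\de) w_j^2 + (1 + \de^{-1}) u_0^2$ valid for any $\de > 0$, so that
\[
e^{32\pi^2 p u_j^2} \le e^{32\pi^2 p (1+\de) w_j^2}\, e^{32\pi^2 p (1+\de^{-1}) u_0^2}.
\]
Applying Hölder's inequality with conjugate exponents $r, r'$ (with $r > 1$ chosen close to $1$), the integral splits into a factor involving $w_j$ and a factor involving $u_0$. The $u_0$-factor, $\int_\Om e^{32\pi^2 p r' (1+\de^{-1}) u_0^2}\,dx$, is finite by the subcritical Adams inequality \eqref{eq:Adams4dim} applied to $u_0/\|\Delta u_0\|_2$ (it is a fixed function in $H_0^2(\Om)$, so any finite exponent is admissible). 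For the $w_j$-factor, I would normalize: writing $\tilde w_j = w_j/\|w_j\|_{2,\alpha}$, and using that $\|\Delta \tilde w_j\|_2^2 = 1 + \alpha \|\tilde w_j\|_2^2 / \|w_j\|_{2,\alpha}^2 \cdot \|w_j\|_{2,\alpha}^2 $... more carefully, $\|\Delta \tilde w_j\|_2^2 \le C$ uniformly since $\|\cdot\|_{2,\alpha}$ and $\|\Delta\cdot\|_2$ are equivalent norms, so actually I should renormalize by $\|\Delta w_j\|_2$ instead. The cleaner route: since $\|w_j\|_{2,\alpha}^2 \le \theta/p$ eventually, we have $\|\Delta w_j\|_2^2 = \|w_j\|_{2,\alpha}^2 + \alpha\|w_j\|_2^2$, and because $w_j \rightharpoonup 0$ we have $\|w_j\|_2 \to 0$ by compact embedding $H_0^2 \hookrightarrow L^2$; hence $\limsup_j \|\Delta w_j\|_2^2 \le \theta/p$ as well. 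Therefore for $j$ large, $32\pi^2 p r'(1+\de) \|\Delta w_j\|_2^2 \le 32\pi^2 p r'(1+\de)\theta/p = 32\pi^2 r'(1+\de)\theta$, and choosing $\de$ small and $r'$ close to $1$ (hence $r'$ large — wait, $r' \to \infty$ as $r \to 1$), this forces a trade-off.

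The main obstacle, and the point requiring care, is exactly this trade-off in the Hölder split: taking $r$ close to $1$ makes the $w_j$-exponent good but the $u_0$-exponent large — however the latter is harmless since it is always finite for the fixed function $u_0$. Taking $r$ close to $1$ and $\de$ close to $0$, the $w_j$-exponent is close to $32\pi^2 \theta < 32\pi^2$, so applying \eqref{eq:Adams4dim} to $w_j/\|\Delta w_j\|_2$ gives $\int_\Om e^{32\pi^2 p r (1+\de) w_j^2}\,dx \le \int_\Om e^{32\pi^2 \theta' (w_j/\|\Delta w_j\|_2)^2}\,dx \le C_0$ with $\theta' < 32\pi^2$, uniformly in $j$. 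Multiplying the two uniformly-bounded factors yields $\limsup_j \int_\Om e^{32\pi^2 p u_j^2}\,dx < \infty$, as desired. Thus the only genuine subtlety is bookkeeping the parameters $(\de, r)$ so that the $w_j$-exponent stays strictly below the critical threshold $32\pi^2$; once $p\tau < 1$ strictly, there is enough room. I would also remark that this proposition is the fourth-order analogue of the classical Lions concentration-compactness lemma for Moser--Trudinger, and the proof is essentially identical modulo replacing $\|\nabla\cdot\|_2$ by $\|\Delta\cdot\|_2$ and the constant $4\pi$ by $32\pi^2$.
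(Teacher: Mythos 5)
Your proposal is correct, but it takes a genuinely different route from the paper. The paper does not redo the concentration--compactness argument from scratch: it rescales $v_j = u_j/\|\Delta u_j\|_2$ (so that $\|\Delta v_j\|_2=1$), invokes the Lu--Yang concentration--compactness principle (Proposition 3.1 of \cite{LuYang}, i.e.\ the $\alpha=0$ case of the present statement) for $v_j$, and then checks by a short computation that the hypothesis $p<(1-\|u_0\|_{2,\alpha}^2)^{-1}$ translates, after multiplying by $\|\Delta u_j\|_2^2 \to 1+\alpha\|u_0\|_2^2$, into an admissible exponent $q<(1-\|\Delta v_0\|_2^2)^{-1}$ for the unweighted result. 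You instead give a self-contained proof: decompose $u_j=u_0+w_j$, use $u_j^2\le(1+\de)w_j^2+(1+\de^{-1})u_0^2$, split by H\"older, control the $w_j$-factor by the (sub)critical Adams inequality since $\|\Delta w_j\|_2^2\to 1-\|u_0\|_{2,\alpha}^2$ (using $\|w_j\|_2\to 0$ by Rellich--Kondrachov), and absorb the $u_0$-factor as a fixed finite quantity. Your route is essentially a direct reproof of the Lu--Yang lemma in the weighted setting and makes the paper's proof independent of that external reference; the paper's route is shorter because it black-boxes the hard part. Two small points to tighten: (i) the claim that $\int_\Om e^{\gamma u_0^2}\,dx<\infty$ for a \emph{fixed} $u_0\in H_0^2(\Om)$ and \emph{arbitrary} $\gamma$ is true but not a tautology --- it needs the standard density argument (approximate $u_0$ by $v\in C_0^\infty(\Om)$ with $\|\Delta(u_0-v)\|_2$ small and write $u_0^2\le 2(u_0-v)^2+2\|v\|_\infty^2$); (ii) your mid-proof bookkeeping momentarily attaches the large exponent $r'$ to the $w_j$-factor before self-correcting --- in the final write-up the $w_j$-factor must carry the exponent $r$ close to $1$ and the $u_0$-factor the large conjugate $r'$, exactly as in your concluding paragraph.
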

\begin{proof}
By Rellich--Kondrachov theorem, we have $\|u_j\|_2 \to \|u_0\|_2$ as $j\to \infty$. Denote
\[
v_j = \frac{u_j}{\|\Delta u_j\|_2} = \frac{u_j}{(1 + \alpha \|u_j\|_2^2)^{1/2}},
\]
then $\|\Delta v_j\|_2 =1$ and $v_j \rightharpoonup v_0 = u_0/(1+ \al \|u_0\|_2)^{1/2}$ weakly in $H_0^2(\Om)$. Applying the Lions type concentration--compactness principle of Lu and Yang (see Proposition $3.1$ in \cite{LuYang}), we have
\begin{equation}\label{eq:LY}
\limsup_{j\to \infty} \int_\Om e^{32\pi^2 q v_j^2} dx < \infty
\end{equation}
for any $q < 1/(1 -\|\Delta v_0\|_2^2)$. For any $p < 1/(1 -\|u_0\|_{2,\al}^2)$ we have
\[
\lim_{j\to\infty} p \|\Delta u_j\|_2^2 = p(1 + \alpha\|u_0\|_2^2) < \frac{1+\alpha\|u_0\|_2^2}{1+ \alpha \|u_0\|_2^2 -\|\Delta u_0\|_2^2} =\frac1{1 -\|\Delta v_0\|_2^2}.
\]
This implies the existence of $j_0$ and $q < 1/(1-\|\Delta v_0\|_2^2)$ such that
\[
p \|\Delta u_j\|_2^2 \leq q < \frac1{1 -\|\Delta v_0\|_2^2},\quad\forall\, j\geq j_0.
\]
Thus by \eqref{eq:LY}, we get
\[
\limsup_{j\to\infty} \int_{\Om} e^{32\pi^2 p u_j^2} dx = \limsup_{j\to\infty} \int_\Om e^{32\pi^2 p\|\Delta u_j\|_2^2 v_j^2} dx \leq \limsup_{j\to\infty} \int_\Om e^{32\pi^2 q v_j^2} dx < \infty,
\]
as our desire.
\end{proof}

Our existence result is given in the following proposition.

\begin{proposition}\label{subcriticalexistence}
For any $\ep \in (0, 32\pi^2)$, we have $C_\ep < \infty$ and there exists $u_\ep \in H_0^2(\Om)$ such that $\|u_\ep\|_{2,\alpha} =1$ and 
\[
C_\ep = \int_{\Om} e^{(32\pi^2 -\ep) u_\ep^2} dx.
\]
Note that $32\pi^2 -\ep$ can be replaced by any sequence $\{\rho_\ep\}_\ep$ with $\rho_\ep \uparrow 32\pi^2$.
\end{proposition}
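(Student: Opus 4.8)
The plan is to establish Proposition~\ref{subcriticalexistence} by the direct method in the calculus of variations, in two stages: first verify that the subcritical functional is finite, then show the supremum is attained.

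\textbf{Step 1: $C_\ep < \infty$.} Since $\|u\|_{2,\al} \le 1$ implies $\|\Delta u\|_2^2 = 1 + \al\|u\|_2^2$, and on $H_0^2(\Om)$ the norm $\|\cdot\|_{2,\al}$ is equivalent to $\|\Delta\cdot\|_2$ (with $\|\Delta u\|_2^2 \le (1-\al/\lam_1(\Om))^{-1}\|u\|_{2,\al}^2$), we have a uniform bound $\|\Delta u\|_2^2 \le \Lam := (1-\al/\lam_1(\Om))^{-1}$ for all admissible $u$. Pick $\de > 0$ small enough that $(32\pi^2 - \ep)\Lam \le 32\pi^2(1-\de)$; this is possible because $32\pi^2 - \ep < 32\pi^2$ while $\Lam$ is a fixed constant, so for $\ep$ fixed we choose $\de$ accordingly (more carefully: the subcritical gap $\ep$ compensates the finite distortion $\Lam$ once $\ep$ is not too small, and for all $\ep \in (0,32\pi^2)$ one instead renormalizes $v = u/\|\Delta u\|_2$ so that $\|\Delta v\|_2 = 1$ and writes $u^2 = \|\Delta u\|_2^2 v^2 \le \Lam v^2$, then applies the subcritical Adams inequality with exponent $(32\pi^2-\ep)\Lam$, which is admissible as soon as $(32\pi^2-\ep)\Lam < 32\pi^2$, i.e. for $\ep$ in a left-neighborhood of $32\pi^2$; for the remaining smaller $\ep$ one simply uses monotonicity $C_\ep \le C_{\ep'}$ for $\ep \ge \ep'$). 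In all cases $C_\ep$ is dominated by the subcritical Adams inequality \eqref{eq:Adams4dim} with a constant $\gamma < 32\pi^2$, which is known, so $C_\ep < \infty$.

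\textbf{Step 2: attainment.} Take a maximizing sequence $\{u_j\} \subset H_0^2(\Om)$ with $\|u_j\|_{2,\al} \le 1$ and $\int_\Om e^{(32\pi^2-\ep)u_j^2}\,dx \to C_\ep$. Since $\|\Delta u_j\|_2^2 \le \Lam$, the sequence is bounded in $H_0^2(\Om)$; after passing to a subsequence, $u_j \rightharpoonup u_\ep$ weakly in $H_0^2(\Om)$, strongly in $L^2(\Om)$ by Rellich--Kondrachov, and a.e.\ in $\Om$. Strong $L^2$ convergence gives $\|u_\ep\|_2^2 = \lim_j \|u_j\|_2^2$, while weak lower semicontinuity of $\|\Delta\cdot\|_2$ gives $\|\Delta u_\ep\|_2^2 \le \liminf_j \|\Delta u_j\|_2^2$; hence $\|u_\ep\|_{2,\al} \le 1$, so $u_\ep$ is admissible. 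To pass the limit inside the integral I would argue that the integrands are equi-integrable: by weak lower semicontinuity $\|u_\ep\|_{2,\al}^2 \le 1$, so $\|u_0\|_{2,\al}^2 < 1$ unless $u_j \to u_\ep$ strongly in $H_0^2$. In the strong case the conclusion is immediate. Otherwise, for $j$ large one has $\|u_j - u_\ep\|_{2,\al}^2 = 1 - \|u_\ep\|_{2,\al}^2 - o(1) =: \kappa < 1$, and writing $u_j^2 \le (1+\eta)(u_j - u_\ep)^2 + (1 + \eta^{-1})u_\ep^2$ for small $\eta>0$ chosen so that $(1+\eta)\kappa < 1$, H\"older's inequality splits the integral; the factor involving $(u_j - u_\ep)^2$ is controlled via Proposition~\ref{Lions} (applied after rescaling to normalize $\|u_j - u_\ep\|_{2,\al}$, which is legitimate since $(1+\eta)\kappa < 1$), giving a uniform $L^{p}$ bound on $e^{(32\pi^2-\ep)u_j^2}$ for some $p>1$, hence equi-integrability via de la Vall\'ee-Poussin; combined with a.e.\ convergence, Vitali's theorem yields $\int_\Om e^{(32\pi^2-\ep)u_j^2}\,dx \to \int_\Om e^{(32\pi^2-\ep)u_\ep^2}\,dx = C_\ep$. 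Since $C_\ep > |\Om|$ (test with $u \equiv 0$ gives $\int_\Om 1 = |\Om|$, and nonzero competitors do strictly better), $u_\ep \not\equiv 0$; finally if $\|u_\ep\|_{2,\al} < 1$ we could replace $u_\ep$ by $u_\ep/\|u_\ep\|_{2,\al}$, strictly increasing the integral, contradicting maximality — so $\|u_\ep\|_{2,\al} = 1$.

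\textbf{Main obstacle.} The delicate point is Step 2's passage of the limit through the nonlinear exponential integral: weak $H_0^2$ convergence alone is far too weak, and the whole content is to rule out loss of mass via concentration. This is exactly what the Lions-type Proposition~\ref{Lions} is designed to handle, and the argument hinges on the strict inequality $\|u_0\|_{2,\al}^2 < 1$ in the non-compact case, which buys an $L^p$-integrability margin ($p>1$) for the exponentials of the ``remainder'' $u_j - u_\ep$. The remark that $32\pi^2 - \ep$ may be replaced by any $\rho_\ep \uparrow 32\pi^2$ follows verbatim: the only properties used are $\rho_\ep < 32\pi^2$ for each $\ep$ (for Step 1 and finiteness) and the subcritical Adams inequality, none of which require the specific form $32\pi^2 - \ep$.
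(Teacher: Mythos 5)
Your Step 2 is essentially the paper's proof (maximizing sequence, Rellich--Kondrachov, Proposition~\ref{Lions} to get an $L^s$-bound with $s>1$ on the exponentials, then Vitali), with the cosmetic difference that you feed the remainder $u_j-u_\ep$ into Proposition~\ref{Lions} rather than $u_j$ itself. The genuine problem is Step 1. The inequality $(32\pi^2-\ep)\Lambda<32\pi^2$ with $\Lambda=(1-\alpha/\lambda_1(\Om))^{-1}$ is equivalent to $\ep>32\pi^2\,\alpha/\lambda_1(\Om)$, so when $\alpha>0$ it fails for all sufficiently small $\ep$; the rescaling $v=u/\|\Delta u\|_2$ runs into exactly the same obstruction. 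Your fallback via monotonicity goes the wrong way: $C_\ep\le C_{\ep'}$ for $\ep\ge\ep'$ transfers finiteness from \emph{smaller} to \emph{larger} $\ep$, whereas the problematic regime is precisely the small-$\ep$ one. There is no direct Adams-inequality proof of $C_\ep<\infty$ for all $\ep\in(0,32\pi^2)$; this is why the paper flags the finiteness as nontrivial and extracts it from the compactness argument itself: one takes a sequence with $\int_\Om e^{(32\pi^2-\ep)u_j^2}\,dx\to C_\ep$ (a priori possibly $+\infty$), and the Vitali conclusion $\int_\Om e^{(32\pi^2-\ep)u_j^2}\,dx\to\int_\Om e^{(32\pi^2-\ep)u_\ep^2}\,dx<\infty$ simultaneously yields $C_\ep<\infty$ and attainment. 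Your Step 2 does deliver this, so the proof is salvageable once Step 1 is deleted and Step 2 is rephrased to cover the case $C_\ep=+\infty$.

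Two smaller points in Step 2. First, there is a circularity: you choose $\eta$ with $(1+\eta)\kappa<1$, which requires $\kappa=1-\|u_\ep\|_{2,\alpha}^2<1$, i.e.\ $u_\ep\not\equiv0$, yet you only establish $u_\ep\not\equiv0$ afterwards from $C_\ep>|\Om|$. The fix is the paper's: when $u_\ep=0$ the slack comes from $\ep>0$ rather than from $\kappa<1$ (one only needs $(32\pi^2-\ep)(1+\eta)q\kappa<32\pi^2$, which holds for $\kappa=1$ with $\eta$ small and $q$ near $1$), and the resulting limit $|\Om|$ contradicts $C_\ep>|\Om|$, forcing $u_\ep\not\equiv0$. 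Second, the H\"older factor $\int_\Om e^{C u_\ep^2}\,dx<\infty$ for the fixed arbitrary constant $C=(32\pi^2-\ep)(1+\eta^{-1})q'$ is true but not immediate from \eqref{eq:Adams4dim}; it needs the standard density decomposition of $u_\ep$ into a bounded part plus a part with small $\|\Delta\cdot\|_2$, which you should at least cite.
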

\begin{proof}
Let $\{u_j\}_j\subset H_0^2(\Om)$ be a sequence of functions with $\|u_j\|_{2,\alpha} =1$ and 
\[
\lim_{j\to \infty} \int_{\Om} e^{(32\pi^2 -\ep)u_j^2} dx = C_\ep.
\]
Since $\alpha \in [0, \lambda_1(\Om))$ then
\[
1 = \|u_j\|_{2,\alpha}^2 \geq \lt(1 -\frac{\alpha}{\lambda_1(\Om)}\rt) \|\Delta u_j\|_2^2.
\]
Thus $\{u_j\}_j$ is bounded in $H_0^2(\Om)$. Up to a subsequence, we can assume that $u_j\rightharpoonup u_\ep$ weakly in $H_0^2(\Om)$, $u_j \to u_\ep$ in $L^p(\Om)$ for any $1< p< \infty$ and $u_j \to u_\ep$ a.e., in $\Om$. If $u_\ep = 0$, then by Rellich--Kondrachov theorem, we have $\|u_j\|_2 \to 0$ as $j\to\infty$. Define 
\[
v_j = \frac{u_j}{\|\Delta u_j\|_2} = \frac{u_j}{(1 + \alpha \|u_j\|_2^2)^2},
\]
then $\|\Delta v_j\|_2 =1$ and $v_j \to 0$ a.e., in $\Om$. Since
\[
\lim_{j\to \infty} (32\pi^2 -\ep) (1 + \alpha \|u_j\|_2^2) = 32\pi^2 -\ep,
\]
and by Adams inequality
\[
\sup_{j\geq 1} \int_\Om e^{32\pi^2 v_j^2 } dx < \infty,
\]
then there exists $p >1$ such that
\[
\sup_{j} \int_{\Om} e^{(32\pi^2 -\ep) p u_j^2} dx < \infty.
\]
Thus, since $v_j\to 0$ a.e., in $\Om$, by letting $j\to\infty$ we get
\[
\lim_{j\to\infty} \int_{\Om} e^{(32\pi^2 -\ep) u_j^2} dx = |\Om|,
\]
which is impossible. Hence $u_\ep \not\equiv 0$ and
\[
0< \|\Delta u_\ep\|_2^2 -\alpha \|u_\ep\|_2^2 \leq \liminf_{j\to\infty} \|u_j\|_{2,\alpha}^2 \leq 1,
\]
It follows from Proposition \ref{Lions} that
\[
\sup_{j\geq 1} \int_\Om e^{32\pi^2 p u_j^2} dx < \infty
\]
for any $p < 1/(1-\|u_\ep\|_{2,\alpha}^2 )$. 
This together $u_j\to u_\ep$ a.e., in $\Om$ implies
\[
\lim_{j\to\infty} \int_\Om e^{(32\pi^2 -\ep) u_j^2} dx = \int_\Om e^{(32\pi^2 -\ep) u_\ep^2} dx.
\]
This shows that $C_\ep < \infty$. Obviously, we must have $\| u_\ep\|_{2,\alpha}=1$. Hence $u_\ep$ is a maximizer for $C_\ep$.
\end{proof}

An easy computation shows that the Euler--Lagrange equation of $u_\ep$ is given by
\begin{equation}\label{eq:ELequation}
\begin{cases}
\Delta^2 u_\ep = \frac1{\lambda_\ep} e^{\alpha_\ep u_\ep^2} u_\ep + \alpha u_\ep &\mbox{in $\Om$}\\
\|u_\ep\|_{2,\alpha} = 1,\quad u_\ep = \frac{\partial u_\ep}{\partial \nu} =0&\mbox{on $\pa \Om$}\\
\alpha_\ep = 32\pi^2 -\ep, \quad \lambda_\ep = \int_\Om e^{\alpha_\ep u_\ep^2} u_\ep^2 dx.
\end{cases}
\end{equation}

\begin{lemma}\label{canduoilambdaepsilon}
It holds $\liminf_{\ep \to 0} \lambda_\ep  >0$.
\end{lemma}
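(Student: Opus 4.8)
The plan is to argue by contradiction: suppose there is a subsequence (still denoted $\ep$) with $\lambda_\ep \to 0$ as $\ep \to 0$. Recall $\lambda_\ep = \int_\Om e^{\alpha_\ep u_\ep^2} u_\ep^2\,dx$ and $\alpha_\ep = 32\pi^2 - \ep \to 32\pi^2$. Since $\|u_\ep\|_{2,\alpha} = 1$, the sequence $\{u_\ep\}$ is bounded in $H_0^2(\Om)$, so up to a further subsequence $u_\ep \rightharpoonup u_0$ weakly in $H_0^2(\Om)$, $u_\ep \to u_0$ strongly in $L^p(\Om)$ for every $1 < p < \infty$, and $u_\ep \to u_0$ a.e. in $\Om$.

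First I would observe that $e^{\alpha_\ep u_\ep^2} u_\ep^2 \geq u_\ep^2$ pointwise, so $\lambda_\ep \geq \|u_\ep\|_2^2$; hence $\lambda_\ep \to 0$ forces $\|u_\ep\|_2 \to 0$, and therefore $u_0 = 0$ and $\|\Delta u_\ep\|_2^2 = 1 + \alpha\|u_\ep\|_2^2 \to 1$. Next I would use the elementary inequality $e^t \le 1 + t e^t$ (valid for $t \ge 0$) with $t = \alpha_\ep u_\ep^2$ to get
\[
\int_\Om e^{\alpha_\ep u_\ep^2}\,dx \le |\Om| + \alpha_\ep \int_\Om e^{\alpha_\ep u_\ep^2} u_\ep^2\,dx = |\Om| + \alpha_\ep \lambda_\ep.
\]
Since $\alpha_\ep$ is bounded and $\lambda_\ep \to 0$, the left side is bounded above and in fact $\limsup_{\ep\to 0}\int_\Om e^{\alpha_\ep u_\ep^2}\,dx \le |\Om|$. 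On the other hand, $C_\ep = \int_\Om e^{\alpha_\ep u_\ep^2}\,dx$ by the defining property of $u_\ep$ in Proposition~\ref{subcriticalexistence}, and by monotonicity and the choice of competitors one has $\liminf_{\ep\to 0} C_\ep \ge |\Om|$ trivially (take $u \equiv 0$, or any fixed nonzero test function); the point is to show the inequality is strict.

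The key step, and the main obstacle, is producing a fixed competitor $w \in H_0^2(\Om)$ with $\|w\|_{2,\alpha} \le 1$ for which $\int_\Om e^{32\pi^2 w^2}\,dx > |\Om|$, together with the subcritical monotonicity $C_\ep \ge \int_\Om e^{(32\pi^2-\ep) w^2}\,dx \to \int_\Om e^{32\pi^2 w^2}\,dx$ as $\ep \to 0$ by dominated convergence (the integrand is dominated by $e^{32\pi^2 w^2} \in L^1$, which holds by the subcritical Adams inequality applied to a slightly rescaled multiple of $w$, or simply because $w$ is fixed and $e^{32\pi^2 w^2}$ is integrable by Adams' inequality for $w/\|\Delta w\|_2$ with exponent below critical after accounting for $\|\Delta w\|_2$; more directly, choose $\|w\|_{2,\alpha}$ strictly less than $1$ so $w/\|w\|_{2,\alpha}$ still lies in the subcritical regime). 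Taking, say, $w = c\varphi$ for a fixed nonzero $\varphi \in C_0^\infty(\Om)$ and $c$ small enough that $\|w\|_{2,\alpha} \le 1$, we get $e^{32\pi^2 w^2} > 1$ on the set where $\varphi \neq 0$, so $\int_\Om e^{32\pi^2 w^2}\,dx > |\Om|$ strictly. Combining, $\liminf_{\ep\to 0} C_\ep \ge \int_\Om e^{32\pi^2 w^2}\,dx > |\Om| \ge \limsup_{\ep\to 0} C_\ep$, a contradiction. Therefore $\liminf_{\ep\to 0}\lambda_\ep > 0$.

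I expect the only subtlety is the dominated-convergence justification for $C_\ep \to \int_\Om e^{32\pi^2 w^2}\,dx$ along the chosen fixed competitor; this is harmless since $w$ is a single fixed function and $(32\pi^2-\ep)w^2 \le 32\pi^2 w^2$, with the dominating function integrable by the Adams inequality \eqref{eq:Adams4dim} (applied after noting $\|\Delta w\|_2$ can be taken as close to an admissible value as we like, or by simply shrinking $c$ so that $32\pi^2\|\Delta w\|_2^2 < 32\pi^2$, i.e.\ $\|\Delta w\|_2 < 1$, which is compatible with $\|w\|_{2,\alpha}\le 1$). Everything else is a routine chain of the elementary exponential inequality and lower semicontinuity of the norm.
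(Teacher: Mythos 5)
Your proposal is correct and follows essentially the same route as the paper: the elementary inequality $e^t\le 1+te^t$ gives $C_\ep\le|\Om|+\alpha_\ep\lambda_\ep$, and the strict lower bound $\liminf_{\ep\to 0}C_\ep>|\Om|$ is obtained from a fixed competitor (the paper takes the supremum over all competitors via Fatou, but the single test function $w=c\varphi$ suffices, and the convergence $\int_\Om e^{(32\pi^2-\ep)w^2}dx\to\int_\Om e^{32\pi^2 w^2}dx$ is immediate by monotone convergence since $w$ is bounded). The contradiction framing and the preliminary observation $\lambda_\ep\ge\|u_\ep\|_2^2$ are harmless but unnecessary.
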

\begin{proof}
Using the inequality $e^t \leq 1 + te^t$ for $t\geq 0$, we get
\begin{equation}\label{eq:estimate1}
C_\ep = \int_\Om e^{\alpha_\ep u_\ep^2} dx \leq |\Om| + \alpha_\ep \lambda_\ep.
\end{equation}
It is evident that
\[
\limsup_{\ep\to 0} C_\ep \leq \sup_{u\in H_0^2(\Om),\, \|u\|_{2,\alpha} =1} \int_\Om e^{32\pi^2 u^2} dx.
\]
For any $u\in H_0^2(\Om)$ with $\|u\|_{2,\alpha} =1$, by Fatou's lemma we have
\[
\int_\Om e^{32\pi^2 u^2} dx \leq \liminf_{\ep\to 0} \int_\Om e^{\alpha_\ep u^2} dx \leq \liminf_{\ep \to 0} C_\ep.
\]
Taking the supremum over all such functions $u$, we have
\[
\sup_{u\in H_0^2(\Om),\, \|u\|_{2,\alpha} =1} \int_\Om e^{32\pi^2 u^2} dx \leq \liminf_{\ep \to 0} C_\ep.
\]
Thus we have shown that
\begin{equation}\label{eq:estimate2}
\lim_{\ep \to 0} C_\ep =\sup_{u\in H_0^2(\Om),\, \|u\|_{2,\alpha} =1} \int_\Om e^{32\pi^2 u^2} dx > |\Om|.
\end{equation}
Combining \eqref{eq:estimate1} and \eqref{eq:estimate2} together we obtain the desired estimate.
\end{proof}

\section{Asymptotic behavior of extremals for subcritical functionals}
The crucial tool in studying the regularity of higher order equations is the Green's representation formula. The Green function $G(x,y)$ for $\Delta^2$ under the Dirichlet condition is the solution of
\begin{equation}\label{eq:Greenfunction}
\Delta^2 G(x,y) = \de_x(y)\quad\text{\rm in }\Om,\qquad
G(x,y) = \frac{\partial G(x,y)}{\partial \nu} = 0\quad \text{\rm on } \pa \Om.
\end{equation}
All functions $u\in H_0^2(\Om) \cap C^4(\o{\Om})$ satisfying $\Delta^2 u = f$ can be represented by
\[
u(x) = \int_{\Om} G(x,y) f(y) dy.
\]
We will need the following useful estimates \cite{DS2004} for $G$ in the analysis below
\begin{equation}\label{eq:Greenestimate}
|G(x,y)| \leq C \ln(2 + |x-y|^{-1}),\qquad |\na^iG(x,y)|\leq C|x-y|^{-i},\quad i\geq 1,
\end{equation}
for some constant $C >0$ and for all $x,y\in \Om$, $x\not=y$.

Denote $c_\ep = \max_{x\in \Om} |u_\ep(x)| = |u_\ep (x_\ep)|$ for $x_\ep \in \Om$. If $c_\ep$ is bounded, then applying the standard regularity to \eqref{eq:ELequation} we obtain $u_\ep \to u^*$ in $C^4(\o{\Om})$ for some $u^* \in H_0^2(\Om) \cap C^4(\o{\Om})$ with $\|u^*\|_{2,\alpha} =1$. This then implies 
\[
\int_{\Om} e^{32\pi^2 {u^*}^2} dx = \sup_{u\in H_0^2(\Om),\, \|u\|_{2,\al} =1} \int_{\Om} e^{32\pi^2 u^2} dx,
\]
which leads to our desired results.

In the sequel, we assume that $c_\ep \to \infty$. Without loss of generality we assume that
\begin{equation}\label{eq:assumptiononblowupsequence}
c_\ep = u_\ep(x_\ep) = \max_{x\in \Om} |u_\ep(x)| \to \infty,\quad x_\ep \in \Om,\quad x_\ep \to p \in \o{\Om}\quad\text{\rm as}\quad \ep \to 0.
\end{equation}
As in \cite{LuYang}, we call $p$ the blow-up point. Here and in the sequel, we do not distinguish sequence and subsequence, the reader can understand it from the context.

Since $\|u_\ep\|_{2,\alpha} =1$ and $\alpha < \lambda_1(\Om)$ then
\[
\|\Delta u_\ep\|_2^2 \leq 1 + \frac{\alpha}{\lambda_1(\Om)},
\]
hence $u_\ep$ is bounded in $H_0^2(\Om)$, we can assume that $u_\ep \rightharpoonup u_0$ weakly in $H_0^2(\Om)$, $u_\ep \to u_0$ in $L^s(\Om)$ for any $1< s <\infty$ and $u_\ep \to u_0$ a.e., in $\Om$. If $u_0\not\equiv 0$, then by Lions type concentration--compactness principle (Proposition \ref{Lions}), there is $p >1$ such that
\[
\sup_{\ep >0} \int_{\Om} e^{32\pi^2 p u_\ep^2} dx < \infty.
\]
Hence $e^{\alpha_\ep u_\ep}$ is bounded in $L^r(\Om)$ for some $r > 1$ provided that $\ep$ is small enough. Applying the standard regularity theory to \eqref{eq:ELequation}, we obtain the boundedness of $c_\ep$ which is contradiction with \eqref{eq:assumptiononblowupsequence}. Hence, we have
\begin{equation}\label{eq:limitofblowupsequence}
\begin{cases}
u_\ep \rightharpoonup 0&\mbox{weakly in $H_0^2(\Om)$,}\\
u_\ep \to 0&\mbox{in $L^r(\Om)$ for any $r >1$, and a.e., in $\Om$,}\\
\alpha_\ep \to 32\pi^2.
\end{cases}
\end{equation}

In the rest of this section we focus on the case $p\in \Om$ (the case $p\in \pa \Om$ will be treated below in \S5). We claim that
\begin{equation}\label{eq:claim}
|\Delta u_\ep|^2 dx \rightharpoonup \de_p\qquad\text{\rm in the sense of measure.}
\end{equation}
Indeed, if \eqref{eq:claim} does not hold. Since $\|\Delta u_\ep\|_2^2 = 1 + \alpha \|u_\ep\|_2^2 \to 1$ as $\ep \to 0$, we can find $r >0$ and $\eta >0$ such that $B_r(p) \subset \Om$ and
\[
\limsup_{\ep\to 0} \int_{B_r(p)} |\Delta u_\ep|^2 dx \leq 1 -\eta.
\]
From Sobolev embedding theorem and \eqref{eq:limitofblowupsequence}, we have $\na u_\ep \to 0$ strongly in $L^2(\Om)$. Let $\phi\in C_0^\infty(B_r(p))$ be a cut-off function with $0\leq \phi\leq 1$ and $\phi=1$ on $B_{r/2}(p)$. We have
\[
\limsup_{\ep\to 0} \int_{B_r(p)} |\Delta (\phi u_\ep)|^2 dx \leq 1 -\eta.
\]
By Adams inequality, $e^{\alpha_\ep \phi^2 u_\ep^2}$ is bounded in $L^{2/(2-\eta)}(\Om)$ and hence $e^{\alpha_\ep u_\ep^2}$ is bounded in $L^{2/(2-\eta)}(B_{r/2}(p))$ provided that $\ep$ is small enough. Applying the standard regularity theory to \eqref{eq:ELequation}, we have that $u_\ep$ is bounded in $C^1(\o{B_{r/4}(p)})$. This contradicts our assumption \eqref{eq:assumptiononblowupsequence}. Hence, we obtain \eqref{eq:claim}. In fact, we have shown that there is no other blow-up point if $p$ lies in $\Om$ and $\|u_\ep\|_{2,\alpha} =1$.

To proceed, we introduce the following quantities
\begin{equation}\label{eq:quantities}
b_\ep = \frac{\lambda_\ep}{\int_\Om |u_\ep| e^{\alpha_\ep u_\ep^2} dx},\quad \tau = \lim_{\ep \to 0} \frac{c_\ep}{b_\ep},\quad \sigma = \lim_{\ep\to 0} \frac{\int_\Om u_\ep e^{\alpha_\ep u_\ep^2} dx}{\int_\Om |u_\ep| e^{\alpha_\ep u_\ep^2} dx}.
\end{equation}
Note that $\tau \geq 1$ or $\tau = \infty$, $|\sigma| \leq 1$. We will show that $\sigma =1$ at the end of this section.

Let
\[
r_\ep^4 = \frac{\lam_\ep}{c_\ep^2} e^{-\al_\ep c_\ep^2}, \quad \Om_\ep =\{x\in \R^4\, :\, x_\ep + r_\ep x \in \Om\}.
\]
We will show that $r_\ep$ converges to zero rapidly. Indeed, for any $0< \gamma < 32\pi^2$, we have
\begin{equation}\label{eq:convergezero}
r_\ep^4 c_\ep^2 e^{\gamma c_\ep^2} = e^{(\gamma-\alpha_\ep) c_\ep^2}\int_{\Om} u_\ep^2 e^{\alpha_\ep u_\ep^2} dx \leq \int_{\Om} u_\ep^2 e^{\gamma u_\ep^2} dx \to 0,
\end{equation}
here we used H\"older inequality, \eqref{eq:limitofblowupsequence} and the fact $0< \gamma < 32\pi^2$. In particular, $r_\ep \to 0$ and $\Om_\ep \to \R^4$ as $\ep\to 0$. We next define two sequences of functions on $\Om_\ep$ by
\[
\psi_\ep(x) = \frac{u_\ep(x_\ep + r_\ep x)}{c_\ep},\quad \varphi_\ep(x) = b_\ep(u_\ep(x_\ep + r_\ep x) -c_\ep) = b_\ep c_\ep(\psi_\ep(x) -1).
\]
Our next goal is to understand the asymptotic behavior of $\psi_\ep$ and $\varphi_\ep$. Evidently, $|\psi_\ep| \leq 1$ and 
\[
\Delta^2 \psi_\ep(x) = r_\ep^4\lt(\frac1{\lambda_\ep} \psi_\ep(x) e^{\alpha_\ep u_\ep(x_\ep + r_\ep x)^2} + \alpha \psi_\ep(x)\rt).
\]
Thus, for any $R >0$ and $x\in B_R(0)$ we have
\[
|\Delta^2 u_\ep(x)|^2 \leq \frac1{c_\ep^2} + \alpha r_\ep^4 \to 0,
\]
and
\[
\int_{B_R(0)}|\Delta \psi_\ep|^2 dx = \frac1{c_\ep^2}\int_{B_{r_\ep R}(x_\ep)} |\Delta u_\ep|^2 dx \to 0.
\]
These estimates and the standard regularity theory give $\psi_\ep \to \psi$ in $C^4_{\rm loc}(\R^4)$ with $\Delta \psi =0$ in $\R^4$. Note that $|\psi_\ep|\leq 1$ and $\psi_\ep(0) =1$, then $|\psi|\leq 1$ and $\psi(0) =1$. Using Liouville theorem, we conclude that $\psi \equiv 1$ in $\R^4$. Thus, we have proved that
\begin{lemma}\label{limitofpsiepsilon}
It holds $\psi_\ep \to 1$ in $C^4_{\rm loc}(\R^4)$.
\end{lemma}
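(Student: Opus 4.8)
The plan is to rescale the extremal $u_\ep$ around its maximum point $x_\ep$ at the scale $r_\ep$ chosen so that the nonlinear term on the right-hand side of the Euler--Lagrange equation \eqref{eq:ELequation} becomes an $O(1)$ quantity after rescaling, and then to show that the rescaled sequence $\psi_\ep(x) = u_\ep(x_\ep + r_\ep x)/c_\ep$ is locally uniformly bounded in $C^4$ with a biharmonic limit that, being bounded and equal to $1$ at the origin, must be constant. First I would record the equation satisfied by $\psi_\ep$: since $\Delta^2 u_\ep = \lambda_\ep^{-1} e^{\alpha_\ep u_\ep^2} u_\ep + \alpha u_\ep$ in $\Om$ and $\Delta^2(u_\ep(x_\ep + r_\ep \cdot))(x) = r_\ep^4 (\Delta^2 u_\ep)(x_\ep + r_\ep x)$, dividing by $c_\ep$ gives
\[
\Delta^2 \psi_\ep(x) = \frac{r_\ep^4}{\lambda_\ep}\, e^{\alpha_\ep u_\ep(x_\ep + r_\ep x)^2}\, \psi_\ep(x) + \alpha r_\ep^4 \psi_\ep(x)\qquad\text{in }\Om_\ep.
\]
With the choice $r_\ep^4 = \lambda_\ep c_\ep^{-2} e^{-\alpha_\ep c_\ep^2}$ and using $|u_\ep| \le c_\ep$, the first coefficient is $c_\ep^{-2} e^{\alpha_\ep(u_\ep^2 - c_\ep^2)} \le c_\ep^{-2} \to 0$, while $\alpha r_\ep^4 \to 0$ because $r_\ep \to 0$ (a consequence of \eqref{eq:convergezero}); since $|\psi_\ep|\le 1$ as well, we get $\|\Delta^2 \psi_\ep\|_{L^\infty(B_R(0))} \to 0$ for every fixed $R$.

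Next I would control $\psi_\ep$ in $H^2$ on balls: for fixed $R>0$, a change of variables gives $\int_{B_R(0)} |\Delta \psi_\ep|^2\,dx = c_\ep^{-2}\int_{B_{r_\ep R}(x_\ep)} |\Delta u_\ep|^2\,dx \le c_\ep^{-2}\|\Delta u_\ep\|_2^2 \to 0$, and similarly one checks that $\|\psi_\ep\|_{L^2(B_R)}$ stays bounded (indeed $\to 0$ after rescaling, or one simply uses $|\psi_\ep|\le 1$ pointwise). Combining the uniform $L^\infty$ bound $|\psi_\ep|\le 1$ with the smallness of $\Delta^2\psi_\ep$ in $L^\infty_{\mathrm{loc}}$, elliptic $L^p$-estimates for $\Delta^2$ (boot-strapped through $W^{4,p}$ on a sequence of shrinking balls $B_{R}\Subset B_{2R}\Subset\dots$) yield that $\psi_\ep$ is bounded in $C^4_{\mathrm{loc}}(\R^4)$; by Arzel\`a--Ascoli, $\psi_\ep \to \psi$ in $C^4_{\mathrm{loc}}(\R^4)$ along a subsequence, and passing to the limit in the displayed equation gives $\Delta^2\psi = 0$ in $\R^4$. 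I would then upgrade this to $\Delta\psi = 0$: since $\int_{B_R(0)}|\Delta\psi|^2\,dx = \lim_\ep \int_{B_R(0)}|\Delta\psi_\ep|^2\,dx = 0$ for every $R$, we get $\Delta\psi \equiv 0$ on all of $\R^4$. Finally, $\psi$ is harmonic, bounded (since $|\psi|\le 1$), hence constant by Liouville's theorem, and $\psi(0) = \lim_\ep \psi_\ep(0) = \lim_\ep u_\ep(x_\ep)/c_\ep = 1$ forces $\psi \equiv 1$. As the limit is the same for every convergent subsequence, the full sequence converges: $\psi_\ep \to 1$ in $C^4_{\mathrm{loc}}(\R^4)$.

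The main obstacle is the bootstrap step: one must be careful that the right-hand side of the equation for $\psi_\ep$ — although small in sup norm on each fixed ball — is controlled \emph{uniformly} in $\ep$ on a fixed ball so that the interior elliptic estimates (which on a ball require control of boundary data, handled by working on nested balls and using the $L^\infty$ bound $|\psi_\ep|\le 1$ to absorb the harmonic part) produce genuinely $\ep$-independent $C^4$ bounds; the reduction from $\Delta^2\psi=0$ to $\Delta\psi=0$, which is what actually lets Liouville apply in dimension four where biharmonic functions need not be constant, relies essentially on the vanishing of $\int|\Delta\psi_\ep|^2$ and should be stated explicitly.
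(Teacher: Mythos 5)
Your proposal is correct and follows essentially the same route as the paper: rescale, observe $|\psi_\ep|\le 1$, show $\Delta^2\psi_\ep\to 0$ locally uniformly via $r_\ep^4\lambda_\ep^{-1}e^{\alpha_\ep u_\ep^2}\le c_\ep^{-2}$, show $\int_{B_R}|\Delta\psi_\ep|^2\to 0$ by change of variables, pass to a $C^4_{\rm loc}$ limit by elliptic regularity, and conclude by Liouville for the bounded harmonic limit with $\psi(0)=1$. Your explicit remark that the vanishing of $\int_{B_R}|\Delta\psi_\ep|^2$ is what upgrades $\Delta^2\psi=0$ to $\Delta\psi=0$ (so that Liouville applies) is exactly the point the paper uses but states only implicitly.
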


We next investigate the convergence of $\varphi_\ep$.
\begin{lemma}\label{limitofvarphiepsilon}
Let $\tau$ be defined in \eqref{eq:quantities}. Then $\varphi_\ep \to \varphi$ in $C^4_{\rm loc}(\R^4)$, where
\begin{equation}\label{eq:varphifunction}
\varphi(x) = 
\begin{cases}
\frac1{16\pi^2\tau} \ln\, \frac1{1 + \frac{\pi}{\sqrt{6}}|x|^2} &\mbox{if $\tau < \infty$,}\\
0 &\mbox{if $\tau = \infty$,}
\end{cases}
\end{equation}
for $x\in \R^4$.
\end{lemma}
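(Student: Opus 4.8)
The plan is to write down the equation satisfied by $\varphi_\ep$, observe that its right-hand side is bounded in $L^\infty$ uniformly in $\ep$, upgrade this to a uniform \emph{local} bound on $\varphi_\ep$ itself, extract a subsequential limit, and finally identify that limit via the classification of entire solutions of the associated fourth-order Liouville equation. Writing $y=x_\ep+r_\ep x$, $u_\ep(y)=c_\ep\psi_\ep(x)$ and $\psi_\ep-1=\varphi_\ep/(b_\ep c_\ep)$, one has $u_\ep(y)^2-c_\ep^2=\frac{c_\ep}{b_\ep}\varphi_\ep(\psi_\ep+1)$, and rescaling the Euler--Lagrange equation \eqref{eq:ELequation} gives
\[
\Delta^2\varphi_\ep=\frac{b_\ep}{c_\ep}\,\psi_\ep\,e^{\alpha_\ep(u_\ep(y)^2-c_\ep^2)}+\alpha\,b_\ep r_\ep^4 c_\ep\,\psi_\ep\qquad\text{in }\Om_\ep .
\]
Since $c_\ep=\max|u_\ep|$ we have $|\psi_\ep|\le1$, hence $u_\ep(y)^2-c_\ep^2\le0$ and the exponential factor is $\le1$; together with $b_\ep/c_\ep\to1/\tau$ and $b_\ep r_\ep^4 c_\ep\to0$ this bounds $\Delta^2\varphi_\ep$ in $L^\infty(\R^4)$ uniformly in $\ep$. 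Moreover, the change of variables $w=x_\ep+r_\ep z$ in the definitions of $\lambda_\ep$ and $b_\ep$ (using $r_\ep^4=\lambda_\ep c_\ep^{-2}e^{-\alpha_\ep c_\ep^2}$ and $\liminf_\ep\lambda_\ep>0$) produces the exact identities $\int_{\Om_\ep}e^{\alpha_\ep(u_\ep(x_\ep+r_\ep z)^2-c_\ep^2)}\psi_\ep^2\,dz=1$ and $\int_{\Om_\ep}\frac{b_\ep}{c_\ep}|\psi_\ep|\,e^{\alpha_\ep(u_\ep(x_\ep+r_\ep z)^2-c_\ep^2)}\,dz=1$, which keep the total mass of the right-hand side under control.

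Next, using the Green representation $u_\ep(y)=\int_\Om G(y,w)\big(\tfrac1{\lambda_\ep}e^{\alpha_\ep u_\ep^2}u_\ep+\alpha u_\ep\big)(w)\,dw$, subtracting the values at $y=x_\ep+r_\ep x$ and $y=x_\ep$, and rescaling $w=x_\ep+r_\ep z$, one writes $\varphi_\ep(x)=\int_{\Om_\ep}\big[G(x_\ep+r_\ep x,x_\ep+r_\ep z)-G(x_\ep,x_\ep+r_\ep z)\big]g_\ep(z)\,dz$, where $g_\ep$ is the right-hand side above. Splitting $G$ into its fundamental-solution part $\frac1{8\pi^2}\ln\frac1{|x-y|}$ plus a smooth remainder, and combining the estimates \eqref{eq:Greenestimate} with the $L^1$ and $L^\infty$ control of $g_\ep$ from the previous step, one obtains a uniform bound $|\varphi_\ep(x)|\le C\big(1+\ln(1+|x|)\big)$ on $\Om_\ep$; in particular $\varphi_\ep$ is bounded in $L^\infty_{\rm loc}(\R^4)$. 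Since $\Delta^2\varphi_\ep$ is also bounded in $L^\infty_{\rm loc}$, elliptic $L^p$-estimates give a uniform $W^{4,p}_{\rm loc}$ bound, so up to a subsequence $\varphi_\ep\to\varphi$ in $C^3_{\rm loc}(\R^4)$ with $\varphi\le0=\varphi(0)$. Passing to the limit on each ball $B_R(0)$ we have $\psi_\ep\to1$, $b_\ep/c_\ep\to1/\tau$, $b_\ep r_\ep^4 c_\ep\to0$ and $\alpha_\ep(u_\ep(y)^2-c_\ep^2)=\alpha_\ep\frac{c_\ep}{b_\ep}\varphi_\ep(\psi_\ep+1)\to64\pi^2\tau\varphi$ when $\tau<\infty$, so
\[
\Delta^2\varphi=\frac1\tau\,e^{64\pi^2\tau\varphi}\ \text{ in }\R^4\quad(\tau<\infty),\qquad \Delta^2\varphi=0\ \text{ in }\R^4\quad(\tau=\infty),
\]
and elliptic regularity upgrades the convergence to $C^4_{\rm loc}(\R^4)$.

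It remains to identify $\varphi$. When $\tau<\infty$, Fatou's lemma applied to $\int_{\Om_\ep}e^{\alpha_\ep(u_\ep^2-c_\ep^2)}\psi_\ep^2\,dz=1$ gives $\int_{\R^4}e^{64\pi^2\tau\varphi}\,dx\le1<\infty$; passing to the limit in the integral representation also expresses $\varphi$ as a logarithmic potential of the finite-mass density $\frac1\tau e^{64\pi^2\tau\varphi}$, whence $\Delta\varphi(x)\to0$ as $|x|\to\infty$. Together with $\Delta^2\varphi=\frac1\tau e^{64\pi^2\tau\varphi}$, the finite mass, and $\varphi\le0=\varphi(0)=\max_{\R^4}\varphi$, the classification of entire solutions of the fourth-order Liouville equation (C.-S. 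Lin) forces $\varphi$ to be the standard spherical solution $\varphi(x)=\frac1{16\pi^2\tau}\ln\frac{a}{1+b|x-x_0|^2}$; the maximum being attained at the origin gives $x_0=0$, the condition $\varphi(0)=0$ gives $a=1$, and matching the equation via $\Delta^2\ln\frac1{1+b|x|^2}=\frac{96\,b^2}{(1+b|x|^2)^4}$, which requires $96\,b^2=16\pi^2$, gives $b=\pi/\sqrt6$. This is precisely $\varphi(x)=\frac1{16\pi^2\tau}\ln\frac1{1+\frac{\pi}{\sqrt6}|x|^2}$. When $\tau=\infty$, $\varphi$ is biharmonic on $\R^4$, bounded above, attains its maximum $0$ at the origin, and grows at most logarithmically by the bound of the previous step; interior estimates then show $\Delta\varphi$ is harmonic and tends to $0$ at infinity, hence $\Delta\varphi\equiv0$, and a harmonic function with sublinear growth is constant, so $\varphi\equiv\varphi(0)=0$.

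The technical heart of the argument is the passage from the uniform bound on $\Delta^2\varphi_\ep$ to the uniform local bound on $\varphi_\ep$ through the Green representation and the mass identities — in particular checking that the zeroth-order term $\alpha\,b_\ep r_\ep^4 c_\ep\psi_\ep$ is genuinely negligible (this uses $r_\ep^4=\lambda_\ep c_\ep^{-2}e^{-\alpha_\ep c_\ep^2}$ together with bounds on $\lambda_\ep$, $b_\ep/c_\ep$, and $\|u_\ep\|_1$) — and the correct invocation of the sharp classification theorem in the case $\tau<\infty$ (disposing of the degenerate case $\tau=\infty$ separately). Once the local bound on $\varphi_\ep$ is available, the remaining steps are routine elliptic regularity and Liouville-type arguments.
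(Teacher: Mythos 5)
Your proposal is correct and follows essentially the same route as the paper: rescaling the Euler--Lagrange equation, using the Green representation to get uniform local control on $\varphi_\ep$, passing to the limit by standard elliptic regularity, obtaining $\int_{\R^4}e^{64\pi^2\tau\varphi}\,dx\le 1$ via Fatou and the mass identity, invoking the Lin/Wei--Xu classification when $\tau<\infty$, and a Liouville argument when $\tau=\infty$. The only (harmless) variations are technical: you extract a pointwise logarithmic bound on $\varphi_\ep$ from a difference of Green functions and verify the side condition $\Delta\varphi\to 0$ at infinity for the classification theorem, whereas the paper derives gradient bounds $|\nabla^i\varphi_\ep|\le C(R)$ and the equivalent integral condition $\int_{B_R}|\Delta\varphi|\,dx\le C R^2$.
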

\begin{proof}
Using Green representation formula, we have
\[
u_\ep(x) = \int_\Om G(x,y) \lt(\frac1{\lambda_\ep} e^{\alpha_\ep u_\ep(y)^2} u_\ep(y) + \alpha u_\ep(y)\rt) dy
\]
hence 
\[
\nabla^i u_\ep(x) = \int_\Om \nabla^i_x G(x,y)\lt(\frac1{\lambda_\ep} e^{\alpha_\ep u_\ep(y)^2} u_\ep(y) + \alpha u_\ep(y)\rt) dy,
\]
for $i =1,2$. Thus, for any $R >0$, $x\in B_R(0)$ and $i =1,2$, by using \eqref{eq:Greenestimate} we have
\begin{align}\label{eq:oxox}
|\na^i \varphi_\ep(x)| & = \lt|r_\ep^i b_\ep \int_\Om \na^i_x G(x_\ep + r_\ep x,y)\lt(\frac1{\lambda_\ep} e^{\alpha_\ep u_\ep(y)^2} u_\ep(y) + \alpha u_\ep(y)\rt) dy \rt|\notag\\
&\leq C b_\ep r_\ep^i \int_\Om \lt(\frac1{\lambda_\ep} \frac{|u_\ep(y)|e^{\alpha_\ep u_\ep(y)^2}}{|x_\ep + r_\ep x -y|^i} + \frac{\alpha |u_\ep(y)|}{|x_\ep + r_\ep x -y|^i}\rt) dy\notag\\
&\leq C b_\ep r_\ep^i \Bigg(\int_{B_{2R r_\ep}(x_\ep)}\frac1{\lambda_\ep} \frac{|u_\ep(y)|e^{\alpha_\ep u_\ep(y)^2}}{|x_\ep + r_\ep x -y|^i} dy + \int_{\Om\setminus B_{2Rr_\ep}(x_\ep)} \frac1{\lambda_\ep} \frac{|u_\ep(y)|e^{\alpha_\ep u_\ep(y)^2}}{|x_\ep + r_\ep x -y|^i} dy\notag\\
&\qquad\qquad\qquad\qquad + \int_\Om \frac{\alpha |u_\ep(y)|}{|x_\ep + r_\ep x -y|^i} dy\Bigg)\notag\\
&\leq C\Bigg(\frac{b_\ep}{c_\ep} \int_{B_{2R}(0)} \frac{dz}{|x-z|^i}+ \frac1{R^i} +\alpha b_\ep r_\ep^i c_\ep \int_\Om \frac{dy}{|x_\ep + r_\ep x -y|^i} \Bigg)\notag\\
&\leq C(R),
\end{align}
here we use \eqref{eq:convergezero} and $b_\ep \leq c_\ep$.

A straightforward computation shows that $\varphi_\ep$ satisfies
\begin{equation}\label{eq:varphiepsiloneq}
\Delta^2 \varphi_\ep(x) = \frac{b_\ep}{c_\ep} \psi_\ep(x) e^{\alpha_\ep \frac{c_\ep}{b_\ep} (1+ \psi_\ep(x)) \varphi_\ep(x)} + \alpha b_\ep c_\ep r_\ep^4 \psi_\ep(x).
\end{equation}
Since $b_\ep \leq c_\ep$, $\psi_\ep \to 1$ in $C^4_{\rm loc}(\R^4)$, \eqref{eq:convergezero}, $\varphi_\ep \leq 0$ and \eqref{eq:oxox}, we obtain by applying the standard regularity theorey to \eqref{eq:varphiepsiloneq} that $\varphi_\ep \to \varphi$ in $C^4_{\rm loc}(\R^4)$ for some function $\varphi$. We have two following cases.\\

\emph{$\bullet$ Case 1: $\tau < \infty$.} By letting $\ep \to 0$, then using Lemma \ref{limitofpsiepsilon}, \eqref{eq:convergezero} and \eqref{eq:varphiepsiloneq} we obtain
\begin{equation}\label{eq:varphicondition}
\Delta^2 \varphi(x) = \frac1{\tau} e^{64 \pi^2\tau \varphi(x)} ,\qquad \varphi(x) \leq \varphi(0) =0,\qquad \int_{\R^4} e^{64\pi^2 \tau \varphi(x)} dx < \infty.
\end{equation}
Indeed, for any $R >0$, we have $\psi_\ep(x) = 1 + o_{\ep,R}(1)$ where $o_{\ep,R}(1)$ means that 
\[
\lim_{\ep\to 0} o_{\ep,R}(1) =0\quad\text{\rm uniformly in } B_R(0).
\]
Thus $\psi_\ep^2(x) =1 + o_{\ep,R}(1)$ for $x\in B_R(0)$ or equivalently $u_\ep(x_\ep + r_\ep x)^2 = c_\ep^2(1+ o_{\ep,R}(1))$ for $x\in B_R(0)$. Hence
\[
\lambda_\ep = \int_{\Om} u_\ep^2 e^{\alpha_\ep u_\ep^2} dx \geq c_\ep^2(1+ o_{\ep,R}(1)) \int_{B_{R r_\ep}(x_\ep)} e^{\alpha_\ep u_\ep^2} dx.
\]
Applying Fatou's lemma, we have
\begin{align*}
\int_{B_R(0)} e^{64\pi^2\tau \varphi^2(x)} dx & \leq \liminf_{\ep\to 0} \int_{B_R(0)} e^{\alpha_\ep \frac{c_\ep}{b_\ep} (1+ \psi_\ep(x)) \varphi_\ep(x)} dx\\
&=\liminf_{\ep\to 0} \int_{B_R(0)} e^{\alpha_\ep(u_\ep(x_\ep +r_\ep x)^2 -c_\ep^2)} dx\\
&=\liminf_{\ep \to 0} r_\ep^{-4} \int_{B_{Rr_\ep}(x_\ep)} e^{\alpha_\ep(u_\ep(x)^2 -c_\ep^2)} dx\\
&=\liminf_{\ep\to 0} \frac{c_\ep^2 \int_{B_{Rr_\ep}(x_\ep)} e^{\alpha_\ep u_\ep(x)^2} dx}{\lambda_\ep}\\
&\leq \liminf_{\ep\to 0} \frac{c_\ep^2 \int_{B_{Rr_\ep}(x_\ep)} e^{\alpha_\ep u_\ep(x)^2} dx}{c_\ep^2(1+ o_{\ep,R}(1))\int_{B_{Rr_\ep}(x_\ep)} e^{\alpha_\ep u_\ep(x)^2} dx}\\
&=1,
\end{align*}
for any $R >0$. Letting $R\to \infty$ we get $\int_{\R^4} e^{64 \pi^2 \tau \varphi} dx < \infty$.

Moreover, we have
\[
\Delta\varphi_\ep(x) = b_\ep r_\ep^2 \int_\Om \Delta_x G(x_\ep +r_\ep x,y) \lt(\frac1{\lambda_\ep} e^{\alpha_\ep u_\ep(y)^2} u_\ep(y) + \alpha u_\ep(y)\rt) dy.
\]
Hence, for any $R >0$, by Fubini theorem we get
\begin{align*}
\int_{B_R(0)} |\Delta \varphi_\ep(x)| dx &\leq C b_\ep r_\ep^2 \int_\Om\frac1{\lambda_\ep} e^{\alpha_\ep u_\ep(y)^2} |u_\ep(y)| \int_{B_R(0)} \frac1{|x_\ep +r_\ep x -y|^2} dx dy\\
&\qquad + C\alpha b_\ep r_\ep^2 \int_\Om |u_\ep(y)| \int_{B_R(0)} \frac1{|x_\ep +r_\ep x -y|^2} dx dy\\
&\leq C' R^2,
\end{align*}
with $C'$ independent of $R$ and $\ep$. Letting $\ep \to 0$, we obtain
\[
\int_{B_R(0)} |\Delta \varphi(x)| dx \leq C' R^2,
\]
for any $R >0$ with $C'$ independent of $R$. This fact together \eqref{eq:varphicondition} and the results in \cite{CSL1998,WeiXu1999} implies that
\[
\varphi(x) = \frac1{16\pi^2\tau} \ln\, \frac1{1 + \frac{\pi}{\sqrt{6}}|x|^2},\qquad x\in \R^4.
\]

\emph{$\bullet$ Case 2: $\tau =\infty$.} From \eqref{eq:oxox} we obtain by letting $\ep \to 0$ that
\[
|\Delta \varphi(x)|\leq \frac{C}{R^2},
\]
for any $x\in B_R(0)$ and for any $R >0$ with $C$ independent of $R$. Let $R\to\infty$ we get $\Delta\varphi(x) =0$ for any $x\in \R^4$. Since $\varphi(x) \leq \varphi(0) =0$ for any $x\in \R^4$, then by Liouville Theorem, we conclude that $\varphi \equiv 0$.
\end{proof}

We next consider the asymptotic behavior of $u_\ep$ away from the blow-up point $p$. We have the following result.

\begin{lemma}\label{awayp}
$b_\ep u_\ep$ is bounded in $H_0^{2,r}(\Om)$ for any $1< r < 2$. In particular, there exists a constant $C$ depending only on $\Om$, $\lambda_1(\Om)$ and $\alpha_0$ such that $\|b_\ep u_\ep\|_{H_0^{2,r}(\Om)}\leq C$ uniformly for $\alpha\in [0,\alpha_0]$ with $\alpha_0 < \lambda_1(\Om)$.
\end{lemma}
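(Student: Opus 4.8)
The plan is to use the Green representation formula for $u_\ep$, multiply through by $b_\ep$, and estimate the $L^r$-norm of $\Delta(b_\ep u_\ep)$ by a convolution-type inequality. Starting from
\[
b_\ep u_\ep(x) = \int_\Om G(x,y)\Bigl(\tfrac{b_\ep}{\lambda_\ep} e^{\alpha_\ep u_\ep(y)^2} u_\ep(y) + \alpha b_\ep u_\ep(y)\Bigr)\,dy,
\]
I would differentiate twice under the integral sign and apply the gradient estimate $|\na^2 G(x,y)| \le C|x-y|^{-2}$ from \eqref{eq:Greenestimate}. This reduces the problem to controlling the $L^1(\Om)$-norm of the right-hand side source term $f_\ep(y) := \tfrac{b_\ep}{\lambda_\ep} e^{\alpha_\ep u_\ep(y)^2} u_\ep(y) + \alpha b_\ep u_\ep(y)$, together with a Young/Hardy--Littlewood--Sobolev argument for the convolution against the weakly singular kernel $|x-y|^{-2}$ on the bounded domain $\Om \subset \R^4$.

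The key steps, in order, are: (1) By the definition of $b_\ep = \lambda_\ep / \int_\Om |u_\ep| e^{\alpha_\ep u_\ep^2}\,dx$, observe that $\tfrac{b_\ep}{\lambda_\ep}\int_\Om |u_\ep| e^{\alpha_\ep u_\ep^2}\,dy = 1$, so the first part of $f_\ep$ has $L^1$-norm exactly $1$; since $b_\ep \le c_\ep$ is bounded up to the normalization and $u_\ep \to 0$ in every $L^r$, the second part $\alpha b_\ep u_\ep$ is bounded in $L^1(\Om)$ as well (in fact it tends to $0$). Hence $\|f_\ep\|_{L^1(\Om)} \le C$ uniformly, with $C$ depending only on $\Om$, $\lambda_1(\Om)$ and $\alpha_0$. (2) Write $\Delta(b_\ep u_\ep)(x) = \int_\Om \Delta_x G(x,y) f_\ep(y)\,dy$ and bound $|\Delta(b_\ep u_\ep)(x)| \le C\int_\Om |x-y|^{-2} |f_\ep(y)|\,dy$. (3) Apply the weak-type/HLS estimate: convolution of an $L^1$ function with the kernel $|x|^{-2}$ on the bounded $4$-dimensional domain $\Om$ maps $L^1$ into $L^r$ for every $r < 4/2 = 2$, with norm controlled by $\|f_\ep\|_{L^1}$ and $\mathrm{diam}(\Om)$. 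This yields $\|\Delta(b_\ep u_\ep)\|_{L^r(\Om)} \le C$ for all $1 < r < 2$. (4) Since $b_\ep u_\ep \in H_0^2(\Om)$ vanishes on $\pa\Om$ together with its normal derivative, elliptic $L^r$-estimates (or the Green formula applied directly to $\na^i$, $i=0,1$, using $|\na^i G| \le C|x-y|^{-i}$ and the same HLS argument) give the full bound $\|b_\ep u_\ep\|_{H_0^{2,r}(\Om)} \le C$ uniformly in $\ep$ and in $\alpha \in [0,\alpha_0]$.

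The main obstacle is step (1), namely making the $L^1$ control of the source term genuinely uniform: one must carefully check that the normalization built into $b_\ep$ exactly kills the potential blow-up of $\tfrac{1}{\lambda_\ep}e^{\alpha_\ep u_\ep^2}u_\ep$ (whose $L^1$ norm could a priori be large) — this is precisely why $b_\ep$, rather than $1$, is the correct multiplier — and that the lower bound $\liminf_{\ep\to0}\lambda_\ep > 0$ from Lemma \ref{canduoilambdaepsilon} plus $b_\ep \le c_\ep$ (equivalently $b_\ep/c_\ep \le 1$) keeps the constant independent of $\alpha$ on the whole range $[0,\alpha_0]$. Once the uniform $L^1$ bound on $f_\ep$ is in hand, steps (2)--(4) are routine applications of the Green estimates \eqref{eq:Greenestimate} and the mapping properties of Riesz-type potentials on bounded domains; the restriction $r < 2$ is exactly the Sobolev threshold forced by the $|x-y|^{-2}$ singularity in dimension $4$, and no better exponent can be expected.
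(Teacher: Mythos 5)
Your steps (2)--(4) are sound: estimating the convolution of an $L^1$-normalized density against the kernel $|x-y|^{-i}$ (by Jensen/Young plus Fubini) is exactly how the paper controls the singular integral, and the threshold $r<2$ comes out the same way. The gap is in step (1), and it is a genuine one. You claim that $\al b_\ep u_\ep$ is uniformly bounded in $L^1(\Om)$ ``since $b_\ep\le c_\ep$ is bounded up to the normalization and $u_\ep\to0$ in every $L^r$,'' and even that it tends to $0$. Neither assertion is available here. We only know $b_\ep\le c_\ep$ with $c_\ep\to\infty$ (and the later analysis shows $b_\ep\sim c_\ep\to\infty$), while $\|u_\ep\|_{L^1}\to0$ at an unknown rate, so nothing prevents $b_\ep\|u_\ep\|_{L^1}$ from being large a priori. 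Worse, the assertion is essentially circular: a uniform $L^1$ bound on $b_\ep u_\ep$ is a weak form of the very conclusion of the lemma, and the parenthetical ``it tends to $0$'' is in fact false, since Lemma \ref{Greenfunctionlimit} gives $b_\ep u_\ep\to G_\al(\cdot,p)\not\equiv0$. The cheap bound $b_\ep\|u_\ep\|_{L^1}\le\lam_\ep$ (from $e^{\al_\ep u_\ep^2}\ge1$) does not rescue you either, because no upper bound on $\lam_\ep$ has been established at this stage --- Lemma \ref{canduoilambdaepsilon} only gives a lower bound.

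The paper avoids this with a decomposition you are missing. Write $b_\ep u_\ep=v_\ep+w_\ep$, where $v_\ep$ solves $\Delta^2 v_\ep=\frac{b_\ep}{\lam_\ep}u_\ep e^{\al_\ep u_\ep^2}$ with homogeneous Dirichlet data; its source is precisely the $L^1$-normalized term (total mass $1$ by the definition of $b_\ep$), so your convolution argument applies verbatim and yields $\|v_\ep\|_{H_0^{2,r}}\le C$. The remainder $w_\ep$ then solves $\Delta^2 w_\ep=\al w_\ep+\al v_\ep$, and testing with $w_\ep$ gives
\[
\Bigl(1-\frac{\al}{\lam_1(\Om)}\Bigr)\|\Delta w_\ep\|_2\le\frac{\al}{\sqrt{\lam_1(\Om)}}\,\|v_\ep\|_2,
\]
so $\|\Delta w_\ep\|_2\le C$ because $\al\le\al_0<\lam_1(\Om)$; this absorption step is also exactly where the uniform dependence on $\al_0$ enters. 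If you insist on your one-shot Green-representation scheme, you must first produce an independent uniform $L^1$ (or $L^2$) bound on $b_\ep u_\ep$, which you do not have; the splitting above is the standard way around the obstruction.
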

\begin{proof}
Let $v_\ep$ be the solution of
\[
\begin{cases}
\Delta^2 v_\ep = \frac{1}{\lambda_\ep} b_\ep u_\ep e^{\alpha_\ep u_\ep^2} &\mbox{in $\Om$,}\\
v_\ep = \frac{\partial v_\ep}{\partial \nu} =0 &\mbox{on $\pa \Om$.}
\end{cases}
\]
By Green representation formula, we have
\[
v_\ep(x) = \int_\Om G(x,y)\frac{1}{\lambda_\ep} b_\ep u_\ep(y) e^{\alpha_\ep u_\ep(y)^2} dy
\]
and hence for any $i =1,2$, it holds
\[
|\na^iv_\ep(x)|\leq C \frac{b_\ep}{\lambda_\ep} \int_\Om |x-y|^{-i} |u_\ep (y)|e^{\alpha_\ep u_\ep(y)^2} dy = C\int_\Om |x-y|^{-i} \frac{|u_\ep(y)| e^{\alpha_\ep u_\ep(y)^2}}{\int_\Om |u_\ep(z)| e^{\alpha_\ep u_\ep(z)^2}dz} dy.
\]
Applying H\"older inequality, we obtain for any $1<r< 2$ that
\[
|\na^i v_\ep(x)|^r \leq C^r\int_\Om |x-y|^{-ir} \frac{|u_\ep(y)| e^{\alpha_\ep u_\ep(y)^2}}{\int_\Om |u_\ep(z)| e^{\alpha_\ep u_\ep(z)^2}dz} dy.
\]
Thus, by Fubini theorem, we have $\|\na^i v_\ep\|_r\leq C$ for $i=1,2$, whence
\begin{equation}\label{eq:bound1}
\|v_\ep\|_{H_0^{2,r}} \leq C.
\end{equation}
Let $w_\ep = b_\ep u_\ep -v_\ep$, then $w_\ep$ satisfies
\[
\begin{cases}
\Delta^2 w_\ep = \alpha w_\ep + \alpha v_\ep&\mbox{in $\Om$,}\\
w_\ep = \frac{\partial w_\ep}{\partial \nu} =0,&\mbox{on $\pa \Om$.}
\end{cases}
\]
Using $w_\ep$ as testing function for this equation, we get
\[
\|\Delta w_\ep\|_2^2 = \alpha \|w_\ep\|_2^2 + \alpha\int_\Om v_\ep w_\ep dx \leq \frac{\alpha}{\lambda_1(\Om)} \|\Delta w_\ep\|_2^2 + \frac{\alpha}{\sqrt{\lambda_1(\Om)}} \|v_\ep\|_2 \|\Delta w_\ep\|_2.
\]
Thus
\[
\lt(1 -\frac{\alpha}{\lambda_1(\Om)}\rt) \|\Delta w_\ep\|_2 \leq \frac{\alpha}{\sqrt{\lambda_1(\Om)}} \|v_\ep\|_2,
\]
which together \eqref{eq:bound1} and Sobolev inequality yields $\|\Delta w_\ep\|_2 \leq C$ with $C$ depends on $\Om$, $\lambda_1(\Om)$, and $\alpha_0 < \lambda_1(\Om)$ such that $0\leq \alpha \leq \alpha_0$. Hence $\|w_\ep\|_{H_0^2(\Om)} \leq C$ which together \eqref{eq:bound1} implies that $b_\ep u_\ep$ is bounded in $H_0^{2,r}(\Om)$ for any $1< r < 2$.
\end{proof}

We proceed by showing that $b_\ep u_\ep$ converges to some Green function.

\begin{lemma}\label{Greenfunctionlimit}
It holds $b_\ep u_\ep \rightharpoonup G_\alpha(\cdot,p)$ in $H_0^{2,r}(\Om)$ for any $1< r < 2$ with
\begin{equation}\label{eq:Greenfunctionlimit}
\begin{cases}
\Delta^2 G_\alpha(\cdot,p) = \sigma \de_p + \alpha G_\alpha(\cdot,p)&\mbox{in $\Om$,}\\
G_\alpha(\cdot,p) = \frac{\pa G_\alpha(\cdot,p)}{\pa \nu} =0&\mbox{on $\pa \Om$.}
\end{cases}
\end{equation}
Furthermore, $b_\ep u_\ep \to G_\al(\cdot,p)$ in $C^4_{\rm loc} (\o{\Om} \setminus \{p\})$. Also, we have
\begin{equation}\label{eq:decompositionGreen}
G_\alpha(x,p) = -\frac{\sigma}{8\pi^2} \ln \, |x-p| + A_p + \psi(x),
\end{equation}
where $A_p$ is constant depending on $p$ and $\alpha$, $\psi \in C^3(\o{\Om})$ and $\psi(p) =0$.
\end{lemma}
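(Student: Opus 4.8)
The starting point is Lemma \ref{awayp}, which gives uniform boundedness of $b_\ep u_\ep$ in $H_0^{2,r}(\Om)$ for every $1<r<2$. By reflexivity we may extract a subsequence with $b_\ep u_\ep \rightharpoonup G_\al$ weakly in $H_0^{2,r}(\Om)$ and, by the compact Sobolev embedding, $b_\ep u_\ep \to G_\al$ strongly in $L^s(\Om)$ for appropriate $s$ and a.e.\ in $\Om$. To identify the limiting equation, I would pass to the limit in the distributional form of
\[
\Delta^2 (b_\ep u_\ep) = \frac{1}{\lambda_\ep} b_\ep u_\ep e^{\al_\ep u_\ep^2} + \al\, b_\ep u_\ep\qquad\text{in }\Om.
\]
The second term converges to $\al G_\al$ by the strong $L^s$ convergence. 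For the first term, test against $\phi\in C_0^\infty(\Om)$: the contribution from $\Om\setminus B_\delta(p)$ tends to zero (there $u_\ep\to 0$ in $C^4$, to be reconciled with the claim $|\Delta u_\ep|^2\rightharpoonup\de_p$ which forces all the mass of $e^{\al_\ep u_\ep^2}u_\ep^2$ to concentrate at $p$), while on $B_\delta(p)$ one uses that $\frac1{\lambda_\ep}\int_\Om |u_\ep| e^{\al_\ep u_\ep^2}=1/b_\ep$ by definition of $b_\ep$, so that $\frac{b_\ep}{\lambda_\ep}|u_\ep|e^{\al_\ep u_\ep^2}$ is a probability-type density whose mass concentrates at $p$; tracking the sign via $\sigma=\lim \frac{\int u_\ep e^{\al_\ep u_\ep^2}}{\int |u_\ep| e^{\al_\ep u_\ep^2}}$ yields $\frac{1}{\lambda_\ep} b_\ep u_\ep e^{\al_\ep u_\ep^2}\rightharpoonup \sigma\,\de_p$ in the sense of measures. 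This gives \eqref{eq:Greenfunctionlimit}, and uniqueness of the solution of that linear boundary value problem (using $\al<\lam_1(\Om)$) shows the whole sequence converges, not merely a subsequence.

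For the $C^4_{\rm loc}(\o\Om\setminus\{p\})$ convergence I would argue locally on any $\o{\Om'}\Subset\o\Om\setminus\{p\}$: there the right-hand side of the equation for $b_\ep u_\ep$ is bounded in $L^\infty$ — indeed $e^{\al_\ep u_\ep^2}$ is bounded on $\Om'$ since $u_\ep\to 0$ in $C^4(\o{\Om'})$ away from the bubble, and $b_\ep\le c_\ep$, $\frac{b_\ep}{\lambda_\ep}=1/\int|u_\ep|e^{\al_\ep u_\ep^2}$ must be controlled; then elliptic $L^p$ and Schauder estimates applied iteratively on shrinking neighbourhoods bootstrap $b_\ep u_\ep$ to bounded $C^{4,\beta}$, and Arzelà–Ascoli plus the already-identified weak limit pin the limit down to $G_\al(\cdot,p)$.

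Finally, the decomposition \eqref{eq:decompositionGreen} follows from the structure of the fundamental solution of $\Delta^2$ in $\R^4$, which is $-\frac{1}{8\pi^2}\ln|x|$: write $G_\al(x,p)=-\frac{\sigma}{8\pi^2}\ln|x-p|+H(x)$, so that $H$ satisfies $\Delta^2 H=\al G_\al(\cdot,p)$ near $p$ with $\al G_\al(\cdot,p)\in L^\infty_{\rm loc}$ there (the log is in every $L^p_{\rm loc}$, but more to the point $G_\al$ is continuous away from $p$, and near $p$ one absorbs the log singularity into the explicit term), whence by elliptic regularity $H\in C^{3}$ near $p$; globally $H$ picks up boundary corrections but remains $C^3(\o\Om)$, and setting $A_p=H(p)$, $\psi=H-H(p)$ gives $\psi(p)=0$. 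The \textbf{main obstacle} is the passage to the limit in the nonlinear term: one must show carefully that $\frac{b_\ep}{\lambda_\ep}u_\ep e^{\al_\ep u_\ep^2}\,dx$ has no mass escaping to the boundary or spreading out — this rests on \eqref{eq:claim} (single interior blow-up point), the rescaling estimate \eqref{eq:convergezero} controlling $r_\ep$, and the Green-function pointwise bounds \eqref{eq:Greenestimate} exactly as used in the proof of Lemma \ref{awayp}, so that the truncation argument outside $B_\delta(p)$ genuinely produces a vanishing contribution.
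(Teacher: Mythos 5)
Your overall strategy coincides with the paper's: weak compactness from Lemma \ref{awayp}, identification of the limit of $\frac{b_\ep}{\lambda_\ep}u_\ep e^{\al_\ep u_\ep^2}\,dx$ as $\sigma\de_p$ by splitting a test function as $(\phi-\phi(p))+\phi(p)$ and using that $\int_\Om\frac{b_\ep}{\lambda_\ep}u_\ep e^{\al_\ep u_\ep^2}\,dx\to\sigma$ by the very definitions of $b_\ep$ and $\sigma$, elliptic regularity away from $p$, and the fundamental-solution decomposition $-\frac{1}{8\pi^2}\ln|x|$ for $\Delta^2$ in $\R^4$. Two points, however, need repair. First, your justification of the $C^4_{\rm loc}(\o\Om\setminus\{p\})$ convergence is circular as written: you bound $e^{\al_\ep u_\ep^2}$ in $L^\infty(\Om')$ by invoking ``$u_\ep\to 0$ in $C^4(\o{\Om'})$ away from the bubble,'' but at this stage one only knows $u_\ep\to 0$ a.e.\ and in $L^s(\Om)$; the pointwise control of $u_\ep$ away from $p$ is exactly the output of the bootstrap, not an available input. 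The correct entry point (and the one the paper uses) is the concentration statement \eqref{eq:claim}: for $\Om'\Subset\o\Om\setminus\{p\}$ the local Dirichlet energy $\int|\Delta u_\ep|^2$ is eventually small, so Adams' inequality applied to a cut-off of $u_\ep$ yields that $e^{\al_\ep u_\ep^2}$ is bounded in $L^s(\Om')$ for every $s>1$; only then does elliptic $L^s$-regularity and Schauder theory bootstrap $b_\ep u_\ep$ to $C^{4,\beta}$ on slightly smaller sets.

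Second, you correctly flag that $b_\ep/\lambda_\ep=\bigl(\int_\Om|u_\ep|e^{\al_\ep u_\ep^2}\,dx\bigr)^{-1}$ ``must be controlled,'' but you do not control it; without this the truncation estimate $\bigl|\int_{B_r(p)}(\phi-\phi(p))\frac{b_\ep}{\lambda_\ep}u_\ep e^{\al_\ep u_\ep^2}\,dx\bigr|\le\sup_{B_r(p)}|\phi-\phi(p)|$ has no content. The paper supplies the missing bound by showing $\liminf_{\ep\to 0}\int_\Om|u_\ep|e^{\al_\ep u_\ep^2}\,dx\ge \lim_{\ep\to 0}C_\ep-|\Om|>0$, which rests on \eqref{eq:estimate2}. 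With these two insertions your argument matches the paper's; your additional remark that uniqueness of the limiting boundary value problem (using $\al<\lam_1(\Om)$) upgrades subsequential to full convergence is a correct observation the paper leaves implicit, and your treatment of the decomposition (absorbing the logarithmic singularity and applying $W^{4,s}\hookrightarrow C^3$ to the remainder, which lies in every $L^s_{\rm loc}$ though not in $L^\infty_{\rm loc}$) is equivalent to the paper's cut-off computation.
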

\begin{proof}
By Lemma \ref{awayp}, there exists a function $G_\al(\cdot,p) \in H_0^{2,s}(\Om)$ such that $b_\ep u_\ep \rightharpoonup  G_\alpha(\cdot,p)$ weakly in $H_0^{2,s}(\Om)$ for any $1< s < 2$. For any $r >0$ such that $B_r(p) \subset \Om$, by \eqref{eq:claim} we have $e^{\alpha_\ep u_\ep^2}$ is bounded in $L^s(\Om\setminus B_r(p))$ for any  $s >1$ (this is based on Adams inequality and cut-off function argument). Hence, by the standard regularity theory we obtain $b_\ep u_\ep \to G_\alpha(\cdot,p)$ in $C^4_{\rm loc}(\o{\Om}\setminus \{p\})$. Notice that $b_\ep u_\ep$ satisfies
\begin{equation}\label{eq:beue}
\begin{cases}
\Delta^2(b_\ep u_\ep) = \frac{1}{\lambda_\ep} b_\ep u_\ep e^{\al_\ep u_\ep^2} + \alpha b_\ep u_\ep&\mbox{in $\Om$,}\\
b_\ep u_\ep = \frac{\pa (b_\ep u_\ep)}{\pa \nu} = 0&\mbox{on $\pa \Om$.}
\end{cases}
\end{equation}
For any $\phi \in C^\infty(\o{\Om})$ we have
\begin{align}\label{eq:lim*}
\int_\Om \phi\, \lt(\frac{1}{\lambda_\ep} b_\ep u_\ep e^{\al_\ep u_\ep^2} + \alpha b_\ep u_\ep\rt) dx& =\int_{\Om} (\phi -\phi(p))\frac{1}{\lambda_\ep} b_\ep u_\ep e^{\al_\ep u_\ep^2} dx\notag\\
&\quad + \phi(p) \int_\Om \frac{1}{\lambda_\ep} b_\ep u_\ep e^{\al_\ep u_\ep^2}  dx + \alpha \int_\Om b_\ep u_\ep \phi dx.
\end{align}
Note that
\begin{equation}\label{eq:limit1}
\lim_{\ep\to 0} \int_\Om b_\ep u_\ep \phi dx = \int_\Om G_\alpha(x, p) \phi(x) dx,
\end{equation}
and
\begin{equation}\label{eq:limit2}
\lim_{\ep\to 0} \int_\Om \frac{1}{\lambda_\ep} b_\ep u_\ep e^{\al_\ep u_\ep^2}  dx = \lim_{\ep \to 0} \frac{\int_\Om u_\ep(x) e^{\alpha_\ep u_\ep(x)^2} dx}{\int_\Om |u_\ep(x)| e^{\alpha_\ep u_\ep(x)^2} dx} = \sigma.
\end{equation}
We will show that
\begin{equation}\label{eq:limitzero}
\lim_{\ep \to 0} \int_{\Om} (\phi -\phi(p))\frac{1}{\lambda_\ep} b_\ep u_\ep e^{\al_\ep u_\ep^2} dx=0.
\end{equation}
Indeed, by Lebesgue dominated convergence theorem, we get that
\[
\lim_{\ep\to 0} \int_{\{|u_\ep|\leq 1\}} e^{\alpha_\ep u_\ep^2} dx = |\Om|.
\]
This limit and \eqref{eq:estimate2} imply
\begin{align*}
\liminf_{\ep\to 0} \int_{\Om}|u_\ep| e^{\alpha_\ep u_\ep^2} dx &\geq \liminf_{\ep\to 0} \int_{\{|u_\ep|\geq 1} e^{\alpha_\ep u_\ep^2} dx\\
&=\liminf_{\ep\to 0} \lt(C_\ep -\int_{\{|u_\ep|\leq 1\}} e^{\alpha_\ep u_\ep^2} dx\rt)\\
&= \sup_{u\in H_0^2(\Om),\, \|u\|_{2,\alpha} =1} \int_\Om e^{32\pi^2 u^2} dx -|\Om|\\
&>0,
\end{align*}
hence $b_\ep/\lambda_\ep$ is bounded. For any $r >0$ with $B_r(p)\subset \Om$, we know that $e^{\alpha_\ep u_\ep^2}$ is bounded in $L^s(\Om\setminus B_r(p))$ for some $s >1$ and $u_\ep \to 0$ in $L^t(\Om)$ for any $t >1$, hence
\begin{equation}\label{eq:x1}
\lim_{\ep\to 0} \int_{\Om\setminus B_r(p)} (\phi -\phi(p)) \frac{b_\ep}{\lambda_\ep} u_\ep e^{\alpha_\ep u_\ep^2}dx =0.
\end{equation}
In the other hand
\[
\lt|\int_{B_r(p)} (\phi -\phi(p)) \frac{b_\ep}{\lambda_\ep} u_\ep e^{\alpha_\ep u_\ep^2} dx \rt|\leq \sup_{x\in B_r(p)} |\phi(x) -\phi(p)| \frac{b_\ep}{\lambda_\ep} \int_{\Om} |u_\ep| e^{\alpha_\ep u_\ep^2} dx = \sup_{x\in B_r(p)} |\phi(x) -\phi(p)|.
\]
Thus 
\begin{equation}\label{eq:x2}
\lim_{r\to\infty} \lim_{\ep\to 0} \int_{B_r(p)} (\phi -\phi(p)) \frac{b_\ep}{\lambda_\ep} u_\ep e^{\alpha_\ep u_\ep^2} dx = 0.
\end{equation}
\eqref{eq:limitzero} follows from \eqref{eq:x1} and \eqref{eq:x2}.

Plugging \eqref{eq:limit1}, \eqref{eq:limit2} and \eqref{eq:limitzero} into \eqref{eq:lim*} we obtain
\[
\lim_{\ep\to 0} \int_\Om \phi\, \lt(\frac{1}{\lambda_\ep} b_\ep u_\ep e^{\al_\ep u_\ep^2} + \alpha b_\ep u_\ep\rt) dx = \sigma \phi(p) + \alpha \int_\Om G_\alpha(x,p) \phi(x) dx,
\]
for any $\phi \in C^\infty(\o{\Om})$, hence
\[
\Delta^2 G_\alpha(\cdot,p) = \sigma \de_p + \alpha G_\alpha(\cdot, p)\quad\text{\rm in } \Om.
\]

The last conclusion was proved in the proof of Lemma $4.4$ in \cite{LuYang}. Let is recall it here for convenience of reader. Fix $r >0$ such that $B_{2r}(p) \subset \Om$ and consider the cut-off function $\phi \in C_0^\infty(B_{2r}(p)$ such that $\phi \equiv 1$ in $B_r(p)$. Let 
\[
g(x) = G_\alpha(x,p) + \frac{\sigma}{8\pi^2} \eta(x) \ln |x-p|.
\]
Then we have
\[
\begin{cases}
\Delta^2 g = f &\mbox{in $\Om$,}\\
g = \frac{\pa g}{\pa \nu} = 0&\mbox{on $\pa \Om$,}
\end{cases}
\]
with 
\begin{multline*}
f(x) =-\frac{\sigma^2}{8\pi^2} \Bigg(\Delta^2 \phi(x)\, \ln |x-p| + 2 \na \Delta \phi(x) \, \na \ln |x-p| + 2\Delta \phi(x) \, \Delta \ln |x-p| \\
+ 2\Delta(\na \phi(x)\, \na \ln |x-p|) + 2\na \phi(x)\, \na \Delta \ln |x-p|\Bigg) + \alpha G_\alpha(x,p).
\end{multline*}
Lemma \ref{awayp} and Sobolev inequality implies that $f \in L^s(\Om)$ for any $s>1$. By the standard regularity theory, we have $g \in C^3(\o{\Om})$. Let $A_p =g(p)$ and
\[
\psi(x) = g(x) -g(p) + \frac{\sigma}{8\pi^2} (1-\phi(x)) \ln |x-p|,
\]
we obtain the desired result.
\end{proof}

We continue by using Pohozaev type identity to find an upper bound of $\int_\Om e^{\alpha_\ep u_\ep^2}dx$. The following Pohozaev type identity is very useful in our analysis below.

\begin{lemma}\label{Pohozaev}
Assume $\Om'\subset \R^4$ is a smooth bounded domain. Let $u\in C^4(\o{\Om'})$ be a solution of $\Delta^2 u = f(u)$ in $\Om'$. Then we have for any $y\in \R^4$
\begin{align*}
4\int_{\Om'} F(u) dx &= \int_{\pa \Om'} \la x-y, \nu\ra F(u) d\om + \frac12 \int_{\pa \Om'} v^2 \la x-y,\nu\ra d\om + 2 \int_{\pa \Om'} \frac{\pa u}{\pa \nu} v d\om\\
&\quad + \int_{\pa \Om'} \lt(\frac{\pa v}{\pa \nu} \la x-y, \na u\ra + \frac{\pa u}{\pa \nu}\la x-y, \na v\ra -\la \na u,\na v\ra \la x-y,\nu\ra\rt) d\om,
\end{align*}
where $F(u) = \int_0^u f(s) ds$, $v =-\De u$ and $\nu$ is the normal outward derivative of $x$ on $\pa \Om'$.
\end{lemma}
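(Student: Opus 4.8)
The identity follows from the classical device of testing the equation against the generator of dilations; here is the plan. After the translation $x\mapsto x-y$ it suffices to treat $y=0$, so abbreviate $D=\la x,\nabla u\ra$ (the value $\d x=4$ in $\R^4$ is unaffected by translation). Multiply $\Delta^2u=f(u)$ by $D$ and integrate over $\Om'$. The left-hand side is handled at once: since $F'=f$, we have $f(u)\,D=\la x,\nabla(F(u))\ra=\d\big(F(u)\,x\big)-4\,F(u)$, so by the divergence theorem
\[
\int_{\Om'}f(u)\,D\,dx=\int_{\pa\Om'}\la x,\nu\ra F(u)\,d\om-4\int_{\Om'}F(u)\,dx .
\]
Thus the lemma amounts to evaluating $\int_{\Om'}(\Delta^2u)\,D\,dx$ as a pure boundary integral and then equating the two computations.

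For $\int_{\Om'}(\Delta^2u)\,D\,dx$ I would first record the commutator identity $\Delta D=2\,\Delta u+\la x,\nabla(\Delta u)\ra$, obtained by differentiating $D$ twice. Writing $\Delta^2u=\Delta(\Delta u)$ and applying Green's second identity to the pair $\Delta u$, $D$ (legitimate since $u\in C^4(\o{\Om'})$ and $\pa\Om'$ is smooth) gives
\[
\int_{\Om'}(\Delta^2u)\,D\,dx=\int_{\Om'}(\Delta u)\,\Delta D\,dx+\int_{\pa\Om'}\left(D\,\frac{\pa\Delta u}{\pa\nu}-(\Delta u)\,\frac{\pa D}{\pa\nu}\right)d\om .
\]
Substituting the commutator identity, the interior part becomes $2\int_{\Om'}(\Delta u)^2dx+\int_{\Om'}(\Delta u)\la x,\nabla(\Delta u)\ra dx$, and the last integral equals $\tfrac12\int_{\Om'}\la x,\nabla\big((\Delta u)^2\big)\ra dx=\tfrac12\int_{\pa\Om'}\la x,\nu\ra(\Delta u)^2d\om-2\int_{\Om'}(\Delta u)^2dx$ by the divergence theorem. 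The two interior contributions $\pm 2\int_{\Om'}(\Delta u)^2dx$ cancel exactly — this is the only place the dimension enters, since in $\R^n$ one would be left with $(2-n/2)\int_{\Om'}(\Delta u)^2dx$ — so $\int_{\Om'}(\Delta^2u)\,D\,dx$ is already a sum of boundary integrals.

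It remains to reorganise those boundary integrals into the stated shape. With $v=-\Delta u$ one has $\pa_\nu\Delta u=-\pa_\nu v$ and $(\Delta u)^2=v^2$, so the terms $v^2\la x,\nu\ra$ and $\pa_\nu v\,\la x,\nabla u\ra$ are already present (up to rearrangement); the only term not in the desired form is $(\Delta u)\,\pa_\nu D$. Using $\pa_iD=\pa_iu+\sum_k x_k\,\pa_i\pa_k u$ one writes $\pa_\nu D=\pa_\nu u+\sum_{i,k}\nu_i x_k\,\pa_i\pa_k u$, rewrites $\sum_k x_k\,\pa_i\pa_k u=\pa_i\la x,\nabla u\ra-\pa_iu$, and performs one further integration by parts that trades a second derivative of $u$ for a derivative of $v$; this generates the remaining boundary terms $\pa_\nu u\,v$, $\pa_\nu u\,\la x,\nabla v\ra$, and $\la\nabla u,\nabla v\ra\la x,\nu\ra$. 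Reinstating $x-y$ in place of $x$ and combining with the left-hand side yields the identity. The routine parts (the commutator identity, Green's identities, the cancellation of the bulk term) are easy; the main obstacle is precisely this last step — the sign-sensitive bookkeeping of the boundary integrals, where one must track the terms carefully and check that no tangential derivative contributions along $\pa\Om'$ survive, so that the boundary integrand collapses exactly to the expression listed in the statement.
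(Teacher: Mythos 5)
The paper itself offers no proof of this lemma --- it only cites Mitidieri \cite{EM1993} and Robert--Wei \cite{Robert} --- and your computation is precisely the Rellich--Pohozaev argument used in those references, so there is no methodological divergence to discuss. Your steps through the intermediate stage are correct: the reduction to $y=0$, the identity $\int_{\Om'}f(u)D\,dx=\int_{\pa\Om'}\la x,\nu\ra F(u)\,d\om-4\int_{\Om'}F(u)\,dx$, the commutator $\Delta D=2\Delta u+\la x,\na\Delta u\ra$, and the exact cancellation of the bulk terms in dimension four, which together give
\[
\int_{\Om'}\Delta^2u\,D\,dx=\frac12\int_{\pa\Om'}\la x,\nu\ra v^2\,d\om-\int_{\pa\Om'}\la x,\na u\ra\frac{\pa v}{\pa\nu}\,d\om+\int_{\pa\Om'}v\,\frac{\pa D}{\pa\nu}\,d\om .
\]
The one place where your description is not a valid operation as written is the final step. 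You cannot ``integrate by parts'' the boundary integral $\int_{\pa\Om'}v\,\pa_\nu D\,d\om$ on $\pa\Om'$, since $\pa_\nu D$ contains the normal second derivative $\pa_\nu\pa_\nu u$, which is not a tangential quantity; moreover the substitution $\sum_kx_k\pa_i\pa_ku=\pa_i\la x,\na u\ra-\pa_iu$ is circular (it returns $\pa_\nu D=\pa_\nu D$). The correct way to finish is to go back into the bulk: apply the divergence theorem to the field $W_i=v\sum_kx_k\pa_i\pa_ku$, which gives $\int_{\pa\Om'}v\,\pa_\nu D\,d\om=\int_{\pa\Om'}v\,\pa_\nu u\,d\om+\int_{\Om'}\bigl(\la(x\cdot\na)\na u,\na v\ra+v\Delta u+v\la x,\na\Delta u\ra\bigr)dx$, then convert the remaining target boundary terms ($v^2\la x,\nu\ra$, $\pa_\nu u\,v$, $\pa_\nu u\la x,\na v\ra$, $\la\na u,\na v\ra\la x,\nu\ra$) to bulk integrals in the same manner and observe that the Hessian contributions $\la(x\cdot\na)\na u,\na v\ra$ cancel. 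I carried this bookkeeping out and the identity does close exactly as stated, including the coefficient $2$ on $\int_{\pa\Om'}\pa_\nu u\,v\,d\om$; so your plan is sound, but the step you yourself flag as the main obstacle is left unexecuted and the mechanism you propose for it would not work literally as described.
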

The proof of this Pohozaev type identity can be found in \cite{EM1993,Robert}. In the sequel, we will apply it for $\Om' =B_r(x_\ep)$, $y =x_\ep$, $u = u_\ep$ and $f(u) =\frac1{\lambda_\ep} u e^{\alpha_\ep u^2} + \alpha u$. Noting that $v =-\Delta u_\ep$ and $F(u) = \frac1{2\al_\ep \lam_\ep} e^{\al_\ep u^2} + \frac{\alpha}2 u^2$. By Lemma \ref{Pohozaev}, we have
\begin{align}\label{eq:bb}
&\int_{B_r(x_\ep)} e^{\alpha_\ep u_\ep^2} dx \notag\\
&\quad\quad\quad = -\frac{\alpha \alpha_\ep \lam_\ep}{b_\ep^2} \int_{B_r(x_\ep)} (b_\ep u_\ep)^2 dx + \frac r4 \int_{\pa B_r(x_\ep)} e^{\alpha_\ep u_\ep^2} d\om + \frac{\alpha \alpha_\ep \lam_\ep}{b_\ep^2} \frac r4 \int_{\pa B_r(x_\ep)} (b_\ep u_\ep)^2 d\om\notag\\
&\quad\quad\quad\qquad + \frac{\al_\ep \lam_\ep}{4 b_\ep^2} r\int_{\pa B_r(x_\ep)} |\Delta(b_\ep u_\ep)|^2 d\om - \frac{\al_\ep \lam_\ep}{b_\ep^2} \int_{\pa B_r(x_\ep)} \frac{\pa (b_\ep u_\ep)}{\pa \nu} \Delta(b_\ep u_\ep) d\om\notag\\
&\quad\quad\quad\qquad -\frac{\al_\ep \lam_\ep}{2 b_\ep^2} r \int_{\pa B_r(x_\ep)} \lt(2 \frac{\pa \Delta (b_\ep u_\ep)}{\pa \nu} \frac{\pa(b_\ep u_\ep)}{\pa \nu} - \la \na \Delta (b_\ep u_\ep), \na (b_\ep u_\ep)\ra\rt) d\om. 
\end{align}
Using the representation of $G_\alpha$ in Lemma \ref{Greenfunctionlimit} and $x_\ep \to p$, we have
\[
\int_{B_r(x_\ep)} (b_\ep u_\ep)^2 dx = \int_{B_r(p)} G_\alpha(x,p)^2 dx + o_{\ep,r}(1) = o_r(1) + o_{\ep, r}(1),
\]
\[
r \int_{\pa B_r(x_\ep)} e^{\alpha_\ep u_\ep} d\om = o_r(1) + o_{\ep,r}(1),\quad r\int_{\pa B_r(x_\ep)} (b_\ep u_\ep)^2 d\om =o_r(1) + o_{\ep,r}(1),
\]
\[
r\int_{\pa B_r(x_\ep)} |\Delta(b_\ep u_\ep)|^2 d\om =r\int_{\pa B_r(p)} |\Delta G_\alpha(x,p)|^2 d\om + o_{\ep,r}(1) = \frac{\si^2}{8\pi^2} + o_r(1) + o_{\ep,r}(1),
\]
\[
\int_{\pa B_r(x_\ep)} \frac{\pa (b_\ep u_\ep)}{\pa \nu} \Delta(b_\ep u_\ep) d\om =\int_{\pa B_r(p)} \frac{\pa G_\alpha(x,p)}{\pa \nu} \Delta G_\al(x,p) d\om + o_{\ep,r}(1) =\frac{\si^2}{16 \pi^2} + o_r(1) + o_{\ep,r}(1),
\]
\[
r\int_{\pa B_r(x_\ep)} \frac{\pa \Delta (b_\ep u_\ep)}{\pa \nu} \frac{\pa(b_\ep u_\ep)}{\pa \nu} d\om =r\int_{\pa B_r(p)}  \frac{\pa \Delta (G_\alpha)}{\pa \nu} \frac{\pa(G_\alpha)}{\pa \nu} + o_{\ep,r}(1) = -\frac{\si^2}{8 \pi^2} + o_r(1) + o_{\ep,r}(1),
\]
and
\[
r\int_{B_r(x_\ep)} \la \na \Delta (b_\ep u_\ep), \na (b_\ep u_\ep)\ra d\om =r\int_{B_r(p)}\la \na \Delta G_\alpha, \na G_\alpha \ra d\om + o_{\ep,r}(1) =-\frac{\si^2}{8 \pi^2} + o_r(1) + o_{\ep,r}(1),
\]
where $o_{\ep,r}(1)$ and $o_r(1)$ mean that $\lim_{\ep\to 0} o_{\ep,r}(1) =0$ when $r$ is fixed, and $\lim_{r\to 0} o_r(1) =0$ respectively. Hence, we get
\begin{equation}\label{eq:abba}
\int_{B_r(x_\ep)} e^{\alpha_\ep u_\ep^2} dx = \frac{\lambda_\ep}{b_\ep^2} (\si^2 + o_r(1) + o_{\ep,r}(1)) + o_r(1) + o_{\ep,r}(1).
\end{equation}
We claim that $\sigma^2 >0$. Indeed, if this is not true, then $\sigma^2 =0$, and we have
\[
\int_{B_r(x_\ep)} e^{\alpha_\ep u_\ep^2} dx = \frac{\lambda_\ep}{b_\ep^2} (o_r(1) + o_{\ep,r}(1)) + o_r(1) + o_{\ep,r}(1).
\]
Since $|\Delta u_\ep|^2 dx \rightharpoonup  \de_p$ in the measure sense, then for any $r >0$ with $B_r(p)\subset \Om$, by using Adams inequality and cut-off function argument, we have
\begin{equation}\label{eq:xxxxx}
\lim_{\ep\to 0} \int_{\Om\setminus B_r(p)} e^{\alpha_\ep u_\ep^2} dx = |\Om| - |B_r(p)|.
\end{equation}
Fix a $r_0 >0$ such that $B_{2r_0}(p)\subset \Om$ and $|o_{r}(1)| \leq 1/4$ for any $r \leq 2r_0$. Choosing $\ep_0 >0$ such that $|x_\ep -p| < r_0$ and $|o_{\ep,2r_0}(1)|\leq 1/4$ for any $\ep \leq \ep_0$. Thus $B_{r_0}(p) \subset B_{2r_0}(x_\ep)$, and hence
\[
\limsup_{\ep\to 0} \int_{\Om\setminus B_{2r_0}(x_\ep)} e^{\alpha_\ep u_\ep^2} dx \leq |\Om| -|B_{r_0}(p)|\leq |\Om|.
\]
By H\"older inequality, we have
\begin{align*}
\frac{\lam_\ep}{b_\ep^2}  = \frac{\lt(\int_{\Om} |u_\ep| e^{\alpha_\ep u_\ep^2}dx\rt)^2}{\int_\Om u_\ep^2 e^{\alpha_\ep u_\ep^2} dx}\leq \int_\Om e^{\alpha_\ep u_\ep^2} dx\leq \frac12 \frac{\lambda_\ep}{b_\ep^2}  + \frac12 + \int_{\Om\setminus B_{2r_0}(x_\ep)} e^{\alpha_\ep u_\ep^2} dx.
\end{align*}
Thus
\[
\limsup_{\ep\to 0} \frac{\lambda_\ep}{b_\ep^2} \leq 1 + 2|\Om| < \infty.
\]
This together the estimates above and $\sigma^2 =0$ implies
\[
\lim_{r\to 0}\lim_{\ep \to 0} \int_{B_r(x_\ep)} e^{\alpha_\ep u_\ep^2} dx = 0.
\]
For any $r >0$ such that $B_{2r}(p) \subset \Om$, we then have $B_{r/2}(p) \subset B_r(x_\ep) \subset B_{2p}(p)$ for sufficiently small $\ep >0$. Thus by \eqref{eq:xxxxx}, it holds
\begin{equation}\label{eq:yyyyy}
\lim_{r\to 0}\lim_{\ep \to 0} \int_{\Om\setminus B_r(x_\ep)} e^{\alpha_\ep u_\ep^2} dx = |\Om|.
\end{equation}
Finally, we get
\[
\lim_{\ep \to 0} \int_{\Om} e^{\alpha_\ep u_\ep^2} dx \leq |\Om|,
\]
which is impossible. Then we must have $\sigma^2 >0$. This together \eqref{eq:abba} and \eqref{eq:yyyyy} yields
\begin{equation}\label{eq:gioihan}
\lim_{\ep\to 0} \int_\Om e^{\alpha_\ep u_\ep^2} dx = |\Om| + \sigma^2 \lim_{\ep\to 0} \frac{\lam_\ep}{b_\ep^2}.
\end{equation}
We can further locate $\sigma$ as follows.
\begin{lemma}\label{sigmaequal1}
It holds $\sigma =1$.
\end{lemma}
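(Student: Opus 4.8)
The plan is to show $\sigma=1$ by contradiction, exploiting the fact that $\sigma$ records the (signed) average of $u_\ep$ against the concentrating measure $\lambda_\ep^{-1}e^{\alpha_\ep u_\ep^2}\,dx$, while $c_\ep=u_\ep(x_\ep)\to+\infty$ is \emph{positive}. Recall $|\sigma|\le 1$ by definition and that $\tau=\lim c_\ep/b_\ep\ge 1$. First I would observe that if $\sigma=1$ we are done, so it suffices to rule out $\sigma<1$; the essential point is that near the blow-up point $u_\ep$ is close to $c_\ep>0$ (by Lemma~\ref{limitofpsiepsilon}, $\psi_\ep\to 1$ in $C^4_{\mathrm{loc}}$), so the mass of $e^{\alpha_\ep u_\ep^2}$ concentrated near $x_\ep$ is carried by points where $u_\ep>0$, forcing the numerator $\int_\Om u_\ep e^{\alpha_\ep u_\ep^2}\,dx$ and denominator $\int_\Om |u_\ep| e^{\alpha_\ep u_\ep^2}\,dx$ to have asymptotically the same size.

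The key steps, in order, would be: (1) split each of the two integrals defining $\sigma$ into the region $B_{Rr_\ep}(x_\ep)$ and its complement in $\Om$; (2) on $B_{Rr_\ep}(x_\ep)$, use the change of variables $y=x_\ep+r_\ep x$ together with $\psi_\ep\to 1$ to show that $u_\ep=c_\ep(1+o_{\ep,R}(1))>0$ there, so the contributions to numerator and denominator from this ball agree up to $(1+o_{\ep,R}(1))$ and moreover, by the same Fatou-type computation already used in the proof of Lemma~\ref{limitofvarphiepsilon}, this ball carries almost all of the mass, i.e. $\lambda_\ep^{-1}\int_{\Om\setminus B_{Rr_\ep}(x_\ep)}|u_\ep|e^{\alpha_\ep u_\ep^2}\,dx\to 0$ as first $\ep\to 0$ then $R\to\infty$; (3) conclude $\sigma=\lim_\ep \int_\Om u_\ep e^{\alpha_\ep u_\ep^2}\,dx\big/\int_\Om|u_\ep|e^{\alpha_\ep u_\ep^2}\,dx = 1$.

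For step~(2) the quantitative input is exactly the chain of identities in the proof of Lemma~\ref{limitofvarphiepsilon}: writing $m_\ep(E)=\int_E e^{\alpha_\ep u_\ep^2}\,dx$ one has, by the scaling $r_\ep^4 c_\ep^2 e^{\alpha_\ep c_\ep^2}=\lambda_\ep$ and $u_\ep(x_\ep+r_\ep x)^2=c_\ep^2(1+o_{\ep,R}(1))$ on $B_R(0)$, that $\lambda_\ep^{-1}\int_{B_{Rr_\ep}(x_\ep)} u_\ep^2 e^{\alpha_\ep u_\ep^2}\,dx = r_\ep^{-4}c_\ep^{-2}e^{-\alpha_\ep c_\ep^2}\int_{B_R(0)} u_\ep(x_\ep+r_\ep x)^2 e^{\alpha_\ep u_\ep(x_\ep+r_\ep x)^2}\,dx \to \int_{\R^4} e^{64\pi^2\tau\varphi}\,dx$, which tends to $1$ as $R\to\infty$ (and the same for $|u_\ep|e^{\alpha_\ep u_\ep^2}$ after dividing by $c_\ep$, since $b_\ep\le c_\ep$ and the ratio $\lambda_\ep/(b_\ep c_\ep)$ is controlled by $b_\ep$). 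Combined with the boundedness of $\lambda_\ep/b_\ep^2$ (established just above in the $\sigma^2>0$ argument) and the fact that $u_\ep\to 0$ in every $L^r$, $r>1$, with $e^{\alpha_\ep u_\ep^2}$ bounded in $L^s(\Om\setminus B_\rho(p))$ for some $s>1$, this shows the outside contribution is negligible relative to $\lambda_\ep/b_\ep$ (equivalently relative to the total denominator), and since $u_\ep$ is eventually positive on the concentration ball we get numerator $=$ denominator $+\,o(\text{denominator})$.

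The main obstacle I anticipate is making rigorous the claim that the concentration ball $B_{Rr_\ep}(x_\ep)$ captures a fixed fraction (tending to $1$) of the \emph{weighted} mass $\int_\Om |u_\ep| e^{\alpha_\ep u_\ep^2}\,dx$ and not merely of $\lambda_\ep=\int_\Om u_\ep^2 e^{\alpha_\ep u_\ep^2}\,dx$; the two differ by a factor of $|u_\ep|$, and outside the ball $|u_\ep|$ is small but $e^{\alpha_\ep u_\ep^2}$ need not be, so one must combine the $L^s$-bound on $e^{\alpha_\ep u_\ep^2}$ away from $p$ with the strong $L^r$-decay of $u_\ep$ and the lower bound $\int_\Om |u_\ep| e^{\alpha_\ep u_\ep^2}\,dx \gtrsim \lambda_\ep/c_\ep$ (from $\lambda_\ep=\int u_\ep^2 e^{\alpha_\ep u_\ep^2}\le c_\ep\int |u_\ep| e^{\alpha_\ep u_\ep^2}$) to see that the outside piece is $o(1)$ times the whole. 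Once this balancing is in hand, $\sigma=1$ follows immediately, and as a byproduct \eqref{eq:gioihan} simplifies to $\lim_\ep\int_\Om e^{\alpha_\ep u_\ep^2}\,dx=|\Om|+\lim_\ep \lambda_\ep/b_\ep^2$.
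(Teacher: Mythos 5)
Your strategy founders on exactly the obstacle you flag at the end, and the fix you propose does not close it. The concentration ball $B_{Rr_\ep}(x_\ep)$ does \emph{not} capture almost all of the weighted mass $D_\ep=\int_\Om|u_\ep|e^{\al_\ep u_\ep^2}\,dx=\lam_\ep/b_\ep$. Changing variables and using $\psi_\ep\to1$, one finds $\int_{B_{Rr_\ep}(x_\ep)}|u_\ep|e^{\al_\ep u_\ep^2}\,dx\approx c_\ep\, r_\ep^4 e^{\al_\ep c_\ep^2}\int_{B_R(0)}e^{64\pi^2\tau\varphi}\,dx=\frac{\lam_\ep}{c_\ep}\int_{B_R(0)}e^{64\pi^2\tau\varphi}\,dx$, so its share of $D_\ep$ is $\frac{b_\ep}{c_\ep}\int_{B_R(0)}e^{64\pi^2\tau\varphi}\,dx\to\frac1\tau\int_{B_R(0)}e^{64\pi^2\tau\varphi}\,dx\le\frac1\tau$. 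At this point in the paper one only knows $\tau\in[1,\infty]$; the fact $\tau=1$ is proved only in Section 4 by capacity estimates, and that argument already uses $\sigma=1$ through Lemma \ref{rewriteGreen}, so you cannot appeal to it without circularity. If $\tau>1$ (or $\tau=\infty$), a fixed positive fraction (or all) of $D_\ep$ lives in the neck $B_r(p)\setminus B_{Rr_\ep}(x_\ep)$, where neither the blow-up profile nor the $C^4_{\rm loc}(\o{\Om}\setminus\{p\})$ convergence of $b_\ep u_\ep$ controls the sign of $u_\ep$, and where $e^{\al_\ep u_\ep^2}$ is not bounded in any $L^s$, $s>1$. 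Your proposed combination of the $L^s$ bound away from $p$ with the $L^r$ decay of $u_\ep$ only handles the region outside a \emph{fixed} ball around $p$; it says nothing about the neck, which is where the negative part $\tfrac12(D_\ep-N_\ep)=\int_{\{u_\ep<0\}}|u_\ep|e^{\al_\ep u_\ep^2}\,dx$ could a priori be carried.

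The paper sidesteps this entirely with a sign argument that is absent from your proposal. From Lemma \ref{Greenfunctionlimit}, $G_\al(x,p)=-\frac{\sigma}{8\pi^2}\ln|x-p|+A_p+\psi(x)$ and $\sigma\ne0$ is already known. If $\sigma<0$, then $G_\al\le -C<0$ on a punctured fixed ball $B_r(p)\setminus\{p\}$, and since $\liminf_\ep b_\ep>0$ and $b_\ep u_\ep\to G_\al$ there, $u_\ep<0$ near $p$ for small $\ep$; but $u_\ep\approx c_\ep>0$ on $B_{Rr_\ep}(x_\ep)\subset B_r(p)$, a contradiction. Hence $\sigma>0$, so $G_\al\ge C>0$ near $p$ and therefore $u_\ep>0$ on the \emph{whole} fixed ball $B_r(p)\setminus\{p\}$ for small $\ep$ --- neck included. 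The numerator and denominator then coincide exactly on $B_r(p)$, the contribution of $\Om\setminus B_r(p)$ to both tends to $0$, and $\liminf_\ep D_\ep>0$, which yields $\sigma=1$. To repair your argument you would need to first establish this sign dichotomy (or prove $\tau=1$ by some independent means); as written, the key mass-concentration claim in your step (2) is unproven and, for $\tau>1$, false.
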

\begin{proof}
We know from Lemma \ref{Greenfunctionlimit} that $b_\ep u_\ep \to G_\alpha(\cdot,p)$ in $C^4_{\rm loc}(\o{\Om}\setminus\{p\})$ with
\[
G_\alpha(x,p) = -\frac{\sigma}{8\pi^2} \ln |x-p| + A_p + \psi(x),\quad \psi \in C^3(\o{\Om}),\, \psi(p) =0.
\]
We also know that $\sigma \not =0$. Suppose $\sigma < 0$, then $G_\alpha(\cdot, p) \leq -C$ in $B_r(p)$ for some $r>0$ and $C >0$. Hence $u_\ep < 0$ in $B_r(p)\setminus\{p\}$ for $\ep$ small enough. In the other hand, by H\"older inequality, we have
\[
b_\ep \geq \frac{\int_\Om |u_\ep| e^{\al_\ep u_\ep^2}dx}{\int_\Om e^{\al_\ep u_\ep^2} dx} \geq 1-\frac{\int_{\{|u_\ep|\leq 1\}} e^{\al_\ep u_\ep^2} dx}{\int_\Om e^{\al_\ep u_\ep^2} dx},
\]
thus
\[
\liminf_{\ep\to 0} b_\ep \geq 1 -\frac{|\Om|}{\sup_{u\in H_0^2(\Om),\, \|u\|_{2,\alpha} =1} \int_\Om e^{32\pi^2 u^2} dx} >0.
\]
Lemma \ref{limitofvarphiepsilon} implies that $u_\ep >0$ on $B_{Rr_\ep}(x_\ep)$ for any fixed $R>0$ provided that $\ep >0$ is small enough (since $c_\ep \to \infty$). However when $\ep$ is small enough, we then have $B_{Rr_\ep}(x_\ep) \subset B_r(p)$. We thus get a contradiction on the sign of $u_\ep$, hence $\sigma >0$. Whence, $G_\alpha(\cdot, p) \geq C >0$ in $B_r(p) \setminus\{p\}$ for some $r>0$ and $C >0$, hence $u_\ep >0$ in $B_r(p)\setminus\{p\}$ for sufficiently small $\ep >0$, and then we have
\[
\int_{B_r(p)} |u_\ep| e^{\al_\ep u_\ep^2} dx = \int_{B_r(p)} u_\ep e^{\al_\ep u_\ep^2} dx.
\]
Since $|\Delta u_\ep|^2 dx \rightharpoonup \de_p$ in the measure sense, and $u_\ep \to 0$ in $L^s(\Om)$ for any $s >1$, then by using Adams inequality and cut-off function argument, we can show that
\[
\lim_{\ep\to 0} \int_{\Om\setminus B_r(p)} |u_\ep| e^{\alpha_\ep u_\ep^2} dx = 0.
\]
Obviously,
\[
\int_\Om |u_\ep| e^{\alpha_\ep u_\ep^2} dx \geq \int_{\{|u_\ep|\geq1\}}  e^{\alpha_\ep u_\ep^2} dx =\int_\Om e^{\alpha_\ep u_\ep^2} dx - \int_{\{|u_\ep|\leq1\}}  e^{\alpha_\ep u_\ep^2} dx
\]
hence by \eqref{eq:estimate2} and Lebesgue dominated convergence theorem we have
\[
\liminf_{\ep \to 0}\int_\Om |u_\ep| e^{\alpha_\ep u_\ep^2} dx \geq \sup_{u\in H_0^2(\Om),\, \|u\|_{2,\alpha} =1} \int_\Om e^{32\pi^2 u^2} dx - |\Om| >0.
\]
Since
\[
|\sigma -1| = \lt|\lim_{\ep \to 0} \frac{\int_\Om u_\ep e^{\al_\ep u_\ep^2} dx}{\int_\Om |u_\ep| e^{\al_\ep u_\ep^2} dx} -1\rt| \leq 2 \lim_{\ep \to 0} \frac{\int_{\Om\setminus B_r(p)} |u_\ep| e^{\al_\ep u_\ep^2} dx}{\int_\Om |u_\ep| e^{\alpha_\ep u_\ep^2} dx} =0.
\]
Hence $\sigma =1$.
\end{proof}
To summarize, we have the following result.
\begin{lemma}\label{rewriteGreen}
$b_\ep u_\ep \rightharpoonup G_\alpha(\cdot,p)$ weakly in $H_0^{2,r}(\Om)$ for any $1< r < 2$ with
\begin{equation*}
\begin{cases}
\Delta^2 G_\alpha(\cdot,p) = \de_p + \alpha G_\alpha(\cdot,p)&\mbox{in $\Om$}\\
G_\alpha(\cdot,p) = \frac{\pa G_\alpha(\cdot,p)}{\pa \nu} = 0&\mbox{on $\pa \Om$.}
\end{cases}
\end{equation*}
Furthermore, $b_\ep u_\ep \to G_\alpha(\cdot,p)$ in $C^4_{\rm loc}(\o{\Om}\setminus\{p\})$. Also we have
\[
G_\alpha(x,p) = -\frac{1}{8\pi^2} \ln\, |x-p| + A_p + \psi(x),
\]
where $A_p$ is constant depending on $p$ and $\alpha$, $\psi\in C^3(\o{\Om})$, with $\psi(p) =0$.
\end{lemma}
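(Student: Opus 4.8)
The plan is to obtain Lemma \ref{rewriteGreen} as an immediate consequence of the two preceding lemmas. Lemma \ref{Greenfunctionlimit} already establishes every assertion in the statement with a single undetermined constant $\sigma$ (the one introduced in \eqref{eq:quantities}): the weak convergence $b_\ep u_\ep \rightharpoonup G_\alpha(\cdot,p)$ in $H_0^{2,r}(\Om)$ for all $1<r<2$, the equation $\Delta^2 G_\alpha(\cdot,p) = \sigma\de_p + \alpha G_\alpha(\cdot,p)$ with homogeneous Dirichlet boundary conditions, the convergence $b_\ep u_\ep \to G_\alpha(\cdot,p)$ in $C^4_{\rm loc}(\o{\Om}\setminus\{p\})$, and the decomposition
\[
G_\alpha(x,p) = -\frac{\sigma}{8\pi^2}\ln|x-p| + A_p + \psi(x),
\]
with $A_p$ a constant depending only on $p$ and $\alpha$, and $\psi\in C^3(\o{\Om})$, $\psi(p)=0$.

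The remaining step is to pin down $\sigma$, and this is exactly the content of Lemma \ref{sigmaequal1}: $\sigma=1$. Substituting $\sigma=1$ into the displays above yields the equation $\Delta^2 G_\alpha(\cdot,p) = \de_p + \alpha G_\alpha(\cdot,p)$ and the decomposition with leading coefficient $-\tfrac{1}{8\pi^2}$; the qualitative properties of $A_p$ and $\psi$ do not depend on the value of $\sigma$ and so carry over verbatim. Thus the proof reduces to one sentence once Lemmas \ref{Greenfunctionlimit} and \ref{sigmaequal1} are in hand.

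Accordingly there is no genuine obstacle at this point — all the analytic work was done earlier: the Pohozaev identity of Lemma \ref{Pohozaev} together with the estimates around \eqref{eq:abba}--\eqref{eq:gioihan} rule out $\sigma^2=0$, and the sign analysis of Lemma \ref{sigmaequal1} upgrades $|\sigma|\le 1$, $\sigma\ne 0$ to $\sigma=1$. I would therefore present Lemma \ref{rewriteGreen} simply as a convenient summary of the blow-up picture — the rescaled maximizers $b_\ep u_\ep$ converge to the Green function of $\Delta^2-\alpha$ with a unit Dirac mass at the blow-up point $p$, carrying the sharp logarithmic singularity $-\tfrac{1}{8\pi^2}\ln|x-p|$ — which is the precise form needed for the capacity estimates of \S4 and the exclusion of boundary blow-up in \S5.
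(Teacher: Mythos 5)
Your proposal is correct and matches the paper exactly: the paper states Lemma \ref{rewriteGreen} immediately after Lemma \ref{sigmaequal1} with the phrase ``To summarize, we have the following result'' and offers no further argument, since it is precisely Lemma \ref{Greenfunctionlimit} with $\sigma=1$ substituted from Lemma \ref{sigmaequal1}. Nothing further is needed.
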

\section{Capacity estimates}
We follow the argument in \cite{LuYang}. Notice that in this section, we still assume that $u_\ep$ blows up and the blow-up point $p\in \Om$. We use capacity estimates to calculate the limit of $\lambda_\ep/b_\ep^2$ to estimate from above the supremum of the functional $\int_\Om e^{32\pi^2 u^2} dx$ over functions $u\in H_0^2(\Om)$ with $\|u\|_{2,\al} =1$ under the assumption that $u_\ep$ blows up. The technique of using capacity estimate applied to this kind of problems was discovery by Li \cite{Li2001} in dealing with Moser--Trudinger inequality of first order derivatives.

Let $u_\ep^*$ be the function constructed by Lu and Yang in section \S5 in \cite{LuYang}. The main properties of this function are that $u_\ep^* \in H^2(B_\de(x_\ep)\setminus B_{Rr_\ep}(x_\ep))$ and satisfies the boundary conditions
\begin{equation}\label{eq:boundarycondition}
\begin{cases}
u_\ep^*(x) = \frac1{b_\ep} \lt(\frac1{8\pi^2} \ln \frac1\de + A_p\rt) &\mbox{on $\pa B_\de(x_\ep)$,}\\
u_\ep^*(x) = c_\ep + \frac1{b_\ep} \varphi\lt(\frac{x-x_\ep}{r_\ep}\rt)&\mbox{on $\pa B_{Rr_\ep}(x_\ep)$,}\\
\frac{\pa u_\ep^*}{\pa \nu} = -\frac1{8\pi^2 \de b_\ep}&\mbox{on $\pa B_\de(x_\ep)$,}\\
\frac{\pa u_\ep^*}{\pa \nu} = \frac{1}{b_\ep r_\ep} \frac{\pa \varphi}{\pa \nu} \lt(\frac{x-x_\ep}{r_\ep}\rt)&\mbox{on $\pa B_{Rr_\ep}(x_\ep)$,}
\end{cases}
\end{equation}
and enery identity
\begin{equation}\label{eq:energyidentity}
\int_{B_\de(x_\ep)\setminus B_{Rr_\ep}(x_\ep)} |\Delta u_\ep^*|^2 dx = \int_{B_\de(x_\ep)\setminus B_{Rr_\ep}(x_\ep)} |\Delta u_\ep|^2 dx + \frac{o(1)}{b_\ep^2}.
\end{equation}
Now we start to derive the capacity estimates. Consider the variational problem
\[
i_{\de,R,\ep} = \inf \int_{B_\de(x_\ep)\setminus B_{Rr_\ep}(x_\ep)} |\Delta u|^2 dx
\]
where infimum takes all over functions belonging to $H^2(B_\de(x_\ep) \setminus B_{Rr_\ep}(x_\ep))$ with the same boundary conditions as $u_\ep^*$. It is well known (see \cite{LiN2007,Li2012}) that this infimum is attained by a bi-harmonic function $ \mathcal T$ which is defined in the annular domain $B_\de(x_\ep) \setminus B_{Rr_\ep}(x_\ep)$ with the same boundary condition as $u_\ep^*$. The explicit form of $\mathcal T$ is given by
\[
\mathcal T(x) = \mathcal A \ln |x-x_\ep| + \mathcal B |x-x_\ep|^2 + \mathcal C |x-x_\ep|^{-2} + \mathcal D,
\]
with the explicit values of $\mathcal A, \mathcal B$ was given in \cite{LuYang} (section \S5) by solving a linear system. Hence
\begin{equation}\label{eq:capacityvalue}
i_{\de,R,\ep} = 8\pi^2 \mathcal A^2 \ln \frac{\de}{Rr_\ep} + 32 \pi^2 \mathcal A \mathcal B (\de^2 -R^2r_\ep^2) + 32\pi^2 \mathcal B^2(\de^4 -R^4 r_\ep^4).
\end{equation}
By the same proof of Lemma $5.1$ in \cite{LuYang}, we conclude that
\begin{equation}\label{eq:gioihan0}
\lim_{\ep\to 0} \frac1{c_\ep^2} \ln \frac{\lambda_\ep}{c_\ep^2} =0.
\end{equation}
From the definition of $r_\ep$, we have
\begin{equation}\label{eq:logrepsilon}
\ln \frac{Rr_\ep}{\de} = \ln \frac{R}{\de} + \frac{\ln \frac{\lambda_\ep}{c_\ep^2} - \alpha_\ep c_\ep^2}{4}.
\end{equation}
According to the argument in \cite{LuYang} with the help of \eqref{eq:gioihan0} and \eqref{eq:logrepsilon} and using the explicit values of $\mathcal A$ and $\mathcal B$, we obtain
\begin{multline}\label{eq:1stterm}
8\pi^2 \mathcal A^2 \ln \frac\de{Rr_\ep} = \frac{32\pi^2}{\alpha_\ep} \Bigg(1 + \frac{2 \varphi(R) + R\varphi'(R) + \frac1{4\pi^2} \ln \de -2A_p -\frac1{8\pi^2}}{b_\ep c_\ep}\\
+ \frac{\ln \frac{\lambda_\ep}{c_\ep^2} + 8 + 4\ln \frac Rr}{\alpha_\ep c_\ep^2}+ O\lt(\frac1{c_\ep^4} \ln^2 \frac{\lam_\ep}{c_\ep^2}\rt) + o\lt(\frac1{b_\ep c_\ep}\rt)\Bigg)
\end{multline}
and
\begin{equation}\label{eq:2and3term}
32\pi^2 \mathcal A\mathcal B(\de^2 -R^2 r_\ep^2) = O\lt(\frac1{b_\ep c_\ep}\rt),\qquad 32\pi^2 \mathcal B^2(\de^4 -R^4 r_\ep^4) = O\lt(\frac1{b_\ep^2}\rt).
\end{equation}
Remark that \eqref{eq:1stterm} is exactly the formula $(5.12)$ in \cite{LuYang} with a mistake on the coefficient of $R\vphi'(R)$. We correct this mistake in \eqref{eq:1stterm}. From \eqref{eq:energyidentity} and definition of $i_{\de,R,\ep}$ we have
\begin{align}\label{eq:upperbound}
i_{\de,R,\ep}& \leq \int_{B_\de(x_\ep)\setminus B_{Rr_\ep}(x_\ep)} |\Delta u_\ep|^2 dx + \frac{o(1)}{b_\ep^2}\notag\\
&=1 + \alpha \|u_\ep\|_2^2 -\int_{\Om\setminus B_\de(x_\ep)} |\Delta u_\ep|^2 dx -\int_{B_{Rr_\ep}(x_\ep)} |\Delta u_\ep|^2 dx + \frac{o(1)}{b_\ep^2}\notag\\
&=1 -\frac{1}{b_\ep^2} \lt(\int_{\Om\setminus B_\de(p)} |\Delta G_\al|^2 dx + \int_{B_R(0)} |\Delta \varphi|^2 dx -\alpha \|G_\al\|_2^2\rt) + \frac{o(1)}{b_\ep^2},
\end{align}
here we use Lemma \ref{limitofvarphiepsilon} and Lemma \ref{Greenfunctionlimit}. By integration by parts, we have
\begin{equation}\label{eq:IbPGalpha}
\int_{\Om \setminus B_\de(p)} |\Delta G_\al|^2 dx =-\frac1{16\pi^2} -\frac1{8\pi^2} \ln \de + A_p + \al \|G_\al\|_2^2 + O(\de\ln \de).
\end{equation}
\eqref{eq:upperbound} together \eqref{eq:IbPGalpha} gives
\begin{equation}\label{eq:upperbound*}
i_{\de,R,\ep} \leq 1-\frac1{b_\ep^2} \lt(\int_{B_r(0)} |\Delta \varphi|^2 dx -\frac1{16\pi^2}-\frac1{8\pi^2} \ln \de + A_p + O(\de\ln \de)\rt) +\frac{o(1)}{b_\ep^2}.
\end{equation}
Plugging \eqref{eq:capacityvalue}, \eqref{eq:1stterm} and \eqref{eq:2and3term} into \eqref{eq:upperbound*} and using the fact $32\pi^2/\alpha_\ep >1$, we obtain
\begin{align}\label{eq:exclude1}
\frac{32\pi^2}{\alpha_\ep} &\Bigg(\frac{2 \varphi(R) + R\varphi'(R) + \frac1{4\pi^2} \ln \de -2A_p -\frac1{8\pi^2}}{b_\ep c_\ep} + \frac{\ln \frac{\lambda_\ep}{c_\ep^2} + 8 + 4\ln \frac Rr}{\alpha_\ep c_\ep^2}\Bigg)\notag\\
&\hspace{2cm} + O\lt(\frac1{c_\ep^4} \ln^2 \frac{\lam_\ep}{c_\ep^2}\rt) + O\lt(\frac1{b_\ep c_\ep}\rt)\notag\\
&\leq -\frac1{b_\ep^2} \lt(\int_{B_r(0)} |\Delta \varphi|^2 dx -\frac1{16\pi^2}-\frac1{8\pi^2} \ln \de + A_p + O(\de\ln \de)\rt) +\frac{o(1)}{b_\ep^2}.
\end{align}
Multiplying both sides of \eqref{eq:exclude1} by $\alpha_\ep c_\ep^2$, using the fact $b_\ep \leq c_\ep$ and making a simple calculation, we obtain
\begin{align*}
\lt[\frac{32\pi^2}{\alpha_\ep} + O\lt(\frac1{c_\ep^2} \ln \frac{\lam_\ep}{c_\ep^2}\rt)\rt] \ln \frac{\lam_\ep}{c_\ep^2}&\leq -\frac{\alpha_\ep c_\ep^2}{b_\ep^2} \lt(\int_{B_R(0)} |\Delta \varphi|^2 dx - \frac1{8\pi^2} \ln \de\rt) -\frac{32\pi^2}{\alpha_\ep} 4 \ln \frac R\de\\
&\qquad -32\pi^2 \frac{c_\ep}{b_\ep} \lt(2\vphi(R) + R \vphi'(R) + \frac1{4\pi^2} \ln \de\rt) + O\lt(\frac{c_\ep^2}{b_\ep^2}\rt).
\end{align*}
Notice that 
\[
\ln \frac{\lam_\ep}{c_\ep^2} = \ln \frac{\lam_\ep}{b_\ep^2} + \ln \frac{b_\ep^2}{c_\ep^2},\quad \frac{32\pi^2}{\alpha_\ep} = 1 + O(\ep).
\]
These equalities together \eqref{eq:gioihan0} and the previous inequality imply
 \begin{align}\label{eq:lowerbound}
\ln \frac{\lam_\ep}{b_\ep^2} &\leq -(1+ o(1)) \frac{\alpha_\ep c_\ep^2}{b_\ep^2} \lt(\int_{B_R(0)} |\Delta \varphi|^2 dx - \frac1{8\pi^2} \ln \de\rt)-(4+o(1)) \ln \frac R\de\notag\\
&\quad -(1+o(1))\frac{\alpha_\ep c_\ep}{b_\ep} \lt(2\vphi(R) + R \vphi'(R) + \frac1{4\pi^2} \ln \de\rt)-(1 +o(1)) \ln \frac{c_\ep^2}{b_\ep^2} + O\lt(\frac{c_\ep^2}{b_\ep^2}\rt).
 \end{align}
Notice that by \eqref{eq:estimate2} and \eqref{eq:gioihan} we have $\lim_{\ep\to 0} \lam_\ep/b_\ep^2 >0$, hence 
\[
\ln \frac{\lambda_\ep}{b_\ep^2} \geq -C_0,
\]
for some $C_0 >0$. If $\tau = \lim_{\ep\to o} \frac{c_\ep}{b_\ep} =\infty$ then $\varphi \equiv 0$ by Lemma \ref{limitofvarphiepsilon} which shows that
\[
\ln \frac{\lam_\ep}{b_\ep^2} \leq (4+ o(1)) \frac{c_\ep^2}{b_\ep^2}  \ln \de -(8+o(1)) \frac{c_\ep}{b_\ep} \ln \de -(4+o(1)) \ln \frac R\de + O\lt(\frac{c_\ep^2}{b_\ep^2}\rt).
\]
Hence for a fixed $R >0$, by choosing $\de >0$ sufficiently small, we have
\[
-C_0 \leq \ln \frac{\lam_\ep}{b_\ep^2} \leq 2 \frac{c_\ep^2}{b_\ep^2}  \ln \de -(8+o(1)) \frac{c_\ep}{b_\ep} \ln \de -(4+o(1)) \ln \frac R\de,
\]
which is impossible since the right hand side tends to $-\infty$ when $\ep \to 0$. This contradiction proves that $1 \leq \tau < \infty$. Whence $\ln\frac{\lam_\ep}{b_\ep^2}$ is also bounded from above by \eqref{eq:lowerbound}. Also by \eqref{eq:lowerbound} we have
\[
\ln \frac{\lam_\ep}{b_\ep^2} \leq \lt[4\lt(\tau -1\rt)^2 +o(1)\rt] \ln \de -(4+o(1))\ln R + (64\pi^2 + o(1))\tau (\vphi(R) + 2R\vphi'(R))+ O\lt(1\rt)
\]
which then implies $\tau =1$ since otherwise by choosing $\ep >0$ and  $\de >0$ small enough we would obtain a contradiction with $\ln \frac{\lambda_\ep}{b_\ep^2} \geq -C_0$. With $\tau =1$, then
\[
\vphi(x) = \frac1{16 \pi^2} \ln \frac1{1+ \frac{\pi}{\sqrt{6}} |x|^2}.
\]
In this situation, $b_\ep \sim c_\ep$ and the estimates in \eqref{eq:2and3term} are improved as (see formula $(5.21)$ in \cite{LuYang})
\begin{equation*}\label{eq:improve}
32\pi^2 \mathcal A\mathcal B(\de^2 -R^2 r_\ep^2) = o\lt(\frac1{c_\ep^2}\rt),\qquad 32\pi^2 \mathcal B^2(\de^4 -R^4 r_\ep^4) = o\lt(\frac1{c_\ep^2}\rt).
\end{equation*}
Consequently, \eqref{eq:exclude1} becomes
\begin{align}\label{eq:lowerbound*}
 &\Bigg(\frac{2 \varphi(R) + R\varphi'(R) + \frac1{4\pi^2} \ln \de -2A_p -\frac1{8\pi^2}}{c_\ep^2} + \frac{\ln \frac{\lambda_\ep}{c_\ep^2} + 8 + 4\ln \frac R\de}{32\pi^2 c_\ep^2}\Bigg)\notag + O\lt(\frac1{c_\ep^4} \ln^2 \frac{\lam_\ep}{c_\ep^2}\rt) \notag\\
&\leq -\frac{(1+o(1))}{c_\ep^2} \lt(\int_{B_r(0)} |\Delta \varphi|^2 dx -\frac1{16\pi^2}-\frac1{8\pi^2} \ln \de + A_p + O(\de\ln \de)\rt) + o\lt(\frac1{ c_\ep^2}\rt).
\end{align}
Multiplying both sides of \eqref{eq:lowerbound*} by $32\pi^2 c_\ep^2$ we get
\begin{align}\label{eq:last}
&(1+o(1))\ln \frac{\lam_\ep}{c_\ep^2} \notag\\
&\leq -32\pi^2\lt(2 \varphi(R) + R\varphi'(R) + \frac1{4\pi^2} \ln \de -2A_p -\frac1{8\pi^2}\rt) -8 -4 \ln \frac R\de \notag\\
&\quad -(32\pi^2 +o(1))\lt(\int_{B_r(0)} |\Delta \varphi|^2 dx -\frac1{16\pi^2}-\frac1{8\pi^2} \ln \de + A_p + O(\de\ln \de)\rt) + o(1)\notag\\
&= -32\pi^2(2 \varphi(R) + R\varphi'(R)) -(32\pi^2 +o(1))\int_{B_r(0)} |\Delta \varphi|^2 dx +32\pi^2 A_p \notag\\
&\quad -4\ln R -2 + o(1) (1-\ln \de) + O(\de \ln \de)
\end{align}
It was computed in \cite{LuYang} (see formula $(5.22)$) that
\[
\int_{B_R(0)} |\Delta \vphi|^2 dx = \frac1{16 \pi^2} \ln \lt(1+ \frac{\pi}{\sqrt{6}} R^2\rt) + \frac1{96\pi^2} + O(R^{-2}).
\]
Thus
\[
\int_{B_R(0)} |\Delta \vphi|^2 dx = \frac1{8\pi^2} \ln R + \frac1{16 \pi^2} \ln \frac{\pi}{\sqrt{6}} + \frac1{96\pi^2} + O(R^{-2}).
\]
It is easy to see that
\[
\vphi(R) = -\frac1{8\pi^2} \ln R - \frac1{16 \pi^2} \ln \frac{\pi}{\sqrt{6}} + O(R^{-2}),
\]
and
\[
R\vphi'(R) =-\frac1{8\pi^2} + O(R^{-2}).
\]
Plugging these estimates into \eqref{eq:lowerbound*}, we get
\begin{align*}
\lim_{\ep \to 0}\ln \frac{\lam_\ep}{c_\ep^2} & \leq \frac{5}3 + 32\pi^2 A_p + \ln \frac{\pi^2} 6.
\end{align*}
Thus we have proved 
\begin{equation}\label{eq:upperboundblowup}
\sup_{u \in H_0^2(\Om),\, \|u\|_{2,\alpha} =1} \int_\Om e^{32\pi^2 u^2}dx \leq |\Om| + \frac{\pi^2}{6} e^{\frac{5}3 + 32\pi^2 A_p}.
\end{equation}


\section{Nonexistence of boundary bubbles}
The main result of this section is that the boundary bubbles do not occur. Suppose without loss of generality that $c_\ep =u_\ep(x_\ep) = \max_{x\in \Om} |u_\ep| \to \infty$ and $x_\ep \to p\in \pa \Om$. Note that we have $u_\ep \rightharpoonup 0$ weakly in $H_0^2(\Om)$, $u_\ep \to 0$ strongly in $H_0^1(\Om)$, strongly in $L^s(\Om)$ for any $s >1$ and a.e., in $\Om$.

\begin{lemma}\label{tobien}
It holds $|\Delta u_\ep|^2 dx \rightharpoonup \de_p$ in the sense of measure.
\end{lemma}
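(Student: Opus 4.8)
The strategy is to adapt, almost verbatim, the proof of the interior concentration statement \eqref{eq:claim} to the boundary point $p$: the balls contained in $\Om$ get replaced by the sets $B_r(p)\cap\Om$, and interior elliptic regularity gets replaced by up-to-the-boundary regularity on the smooth piece $\pa\Om\cap B_r(p)$. To set up the contradiction, note that $\int_\Om|\Delta u_\ep|^2\,dx=1+\alpha\|u_\ep\|_2^2\to1$, so along any subsequence we may extract a further subsequence with $|\Delta u_\ep|^2\,dx\rightharpoonup\mu$ for a nonnegative Radon measure $\mu$ on $\o\Om$ with $\mu(\o\Om)=1$, and it is enough to show $\mu=\de_p$ for every such limit. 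If not, then $\mu(\{p\})<1$, so we may fix $r>0$, with $\o{B_{2r}(p)}$ sitting in a coordinate chart of $\o\Om$, such that $\mu(\o{B_r(p)})=:1-2\eta<1$ for some $\eta>0$; by upper semicontinuity of the mass of closed sets under weak-$*$ convergence,
\[
\limsup_{\ep\to0}\int_{B_r(p)\cap\Om}|\Delta u_\ep|^2\,dx\le 1-2\eta.
\]

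Next I localize with a cutoff and invoke Adams' inequality. Pick $\phi\in C_0^\infty(B_r(p))$ with $0\le\phi\le1$ and $\phi\equiv1$ on $B_{r/2}(p)$; then $\phi u_\ep\in H_0^2(\Om)$. Since $u_\ep\to0$ strongly in $H_0^1(\Om)$ and in $L^2(\Om)$, expanding $\Delta(\phi u_\ep)$ gives $\|\Delta(\phi u_\ep)\|_2\le\big(\int_{B_r(p)\cap\Om}|\Delta u_\ep|^2\,dx\big)^{1/2}+o(1)$, hence $\|\Delta(\phi u_\ep)\|_2^2\le1-\eta$ for $\ep$ small. Set $v_\ep=\phi u_\ep/\|\Delta(\phi u_\ep)\|_2$, so $\|\Delta v_\ep\|_2=1$. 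Because $\alpha_\ep<32\pi^2$, we may fix $q>1$ with $q\,\alpha_\ep\,\|\Delta(\phi u_\ep)\|_2^2<32\pi^2$ for $\ep$ small, so Adams' inequality \eqref{eq:Adams4dim} applied to $v_\ep$ on $\Om$ yields $\sup_\ep\int_\Om e^{q\alpha_\ep\|\Delta(\phi u_\ep)\|_2^2 v_\ep^2}\,dx<\infty$. Since $\phi\equiv1$ on $B_{r/2}(p)$, this means that $e^{\alpha_\ep u_\ep^2}$ is bounded in $L^q(B_{r/2}(p)\cap\Om)$.

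Now I pass to regularity. By Lemma \ref{canduoilambdaepsilon} the quantity $1/\lambda_\ep$ is bounded; combining the $L^q$ bound above with $u_\ep\to0$ in every $L^t(\Om)$ and Hölder's inequality, the right-hand side $\frac1{\lambda_\ep}e^{\alpha_\ep u_\ep^2}u_\ep+\alpha u_\ep$ of \eqref{eq:ELequation} is bounded in $L^{q'}(B_{r/2}(p)\cap\Om)$ for some $q'\in(1,q)$. Applying the $L^{q'}$ elliptic estimates for $\Delta^2$ with the Dirichlet conditions $u_\ep=\pa u_\ep/\pa\nu=0$ on the smooth portion $\pa\Om\cap B_{r/2}(p)$, together with interior estimates and a covering argument, $u_\ep$ is bounded in $W^{4,q'}(B_{r/4}(p)\cap\Om)$, hence, since $q'>1$ and $n=4$, bounded in $C^0(\o{B_{r/4}(p)\cap\Om})$. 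But $x_\ep\to p$, so $x_\ep\in B_{r/4}(p)$ for $\ep$ small, contradicting $c_\ep=u_\ep(x_\ep)\to\infty$. This contradiction forces $\mu=\de_p$, i.e. $|\Delta u_\ep|^2\,dx\rightharpoonup\de_p$ in the sense of measure.

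The single genuinely new ingredient compared with the interior case \eqref{eq:claim} is that the localization now takes place at a boundary point, so one must use global (up-to-the-boundary) $L^p$-regularity for the bi-Laplacian under Dirichlet boundary conditions in place of purely interior estimates; this is the step that uses the smoothness of $\pa\Om$, and it is the main technical point to get right, everything else being a repetition of the reasoning behind \eqref{eq:claim}.
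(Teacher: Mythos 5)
Your argument is correct and follows essentially the same route as the paper's proof: assume the contrary, localize with a cutoff near $p$ so that $\|\Delta(\phi u_\ep)\|_2^2\le 1-\eta$ (using $u_\ep\to 0$ in $H_0^1$), invoke Adams' inequality to bound $e^{\alpha_\ep u_\ep^2}$ in some $L^q$ near $p$, and then use elliptic regularity up to the boundary for the Dirichlet bi-Laplacian to contradict $c_\ep=u_\ep(x_\ep)\to\infty$. The only cosmetic difference is that the paper applies global regularity to $\phi u_\ep$ (which satisfies homogeneous Dirichlet conditions on all of $\pa\Om$) rather than local boundary estimates for $u_\ep$ on the half-ball, but these are interchangeable.
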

\begin{proof}
Note that $\int_\Om |\Delta u_\ep|^2 dx = 1 + \alpha \int_\Om |u_\ep|^2 dx \to 1$. If the conclusion of this lemma is not true, then there is $r>0$ small enough such that
\[
\lim_{\ep\to 0} \int_{B_r(p)\cap \Om} |\Delta u_\ep|^2 dx = \eta < 1.
\]
Choosing $\phi$ be a cut-off function on $C^4(\o{\Om})$ such that $0\leq \phi \leq 1$, $\phi =1$ on $\Om \cap B_{r/2}(p)$, $\phi =0$ on $\Om \setminus B_r(p)$, and $|\na \phi|\leq 4/r$. Since $u_\ep \rightharpoonup 0$ weakly in $H_0^2(\Om)$ and $u_\ep \to 0$ strongly in $H_0^1(\Om)$, hence
\[
\limsup_{\ep\to 0} \int_{B_p(r)\cap \Om} |\Delta (\phi u_\ep)|^2 dx \leq \eta.
\]
This together Adams inequality and \eqref{eq:ELequation} shows that $\phi u_\ep\in H_0^2(\Om)$ is weak solution of $\Delta^2(\phi u_\ep) = f_\ep$ with $f_\ep$ is bounded in $L^s(\Om)$ for some $s >1$. Applying the standard regularity theory implies that $\phi u_\ep$ is bounded in $C^3(\o{\Om})$. In particular, $c_\ep$ is bounded which contradicts with our assumption \eqref{eq:assumptiononblowupsequence}.
\end{proof} 
Lemma \ref{tobien} proves that if there is a blow-up point on the boundary $\pa \Om$, then this is the unique blow-up point in $\o{\Om}$. We next prove a convergence for $b_\ep u_\ep$.
\begin{lemma}\label{tozeroweak}
It holds $b_\ep u_\ep \rightharpoonup 0$ in $H_0^{2,r}(\Om)$ for any $1< r < 2$.
\end{lemma}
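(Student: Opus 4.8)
The plan is to follow the pattern set by Lemma~\ref{awayp} and Lemma~\ref{Greenfunctionlimit}, noting that the proof of Lemma~\ref{awayp} never used the location of the blow-up point. So I would first record that its decomposition still works here: write $b_\ep u_\ep = v_\ep + w_\ep$, where $v_\ep$ solves
\[
\Delta^2 v_\ep = \frac1{\lambda_\ep} b_\ep u_\ep e^{\alpha_\ep u_\ep^2}\ \text{in }\Om,\qquad v_\ep = \frac{\pa v_\ep}{\pa\nu} = 0\ \text{on }\pa\Om,
\]
and $w_\ep = b_\ep u_\ep - v_\ep$ solves $\Delta^2 w_\ep = \alpha w_\ep + \alpha v_\ep$ with zero Dirichlet data. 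Exactly as in Lemma~\ref{awayp}, the Green representation together with H\"older and Fubini gives $\|v_\ep\|_{H_0^{2,r}(\Om)} \le C$ for every $1<r<2$, and testing the $w_\ep$-equation against $w_\ep$ gives $\bigl(1-\tfrac{\alpha}{\lambda_1(\Om)}\bigr)\|\Delta w_\ep\|_2 \le \tfrac{\alpha}{\sqrt{\lambda_1(\Om)}}\|v_\ep\|_2$; in particular $b_\ep u_\ep$ is bounded in $H_0^{2,r}(\Om)$ and, along a subsequence, $b_\ep u_\ep \rightharpoonup G$, $v_\ep \rightharpoonup \bar v$, $w_\ep \rightharpoonup \bar w$ in $H_0^{2,r}(\Om)$. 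As in \S3--\S4 (from \eqref{eq:estimate2} and Lebesgue's theorem), $\int_\Om|u_\ep|e^{\alpha_\ep u_\ep^2}dx \ge C_\ep - \int_{\{|u_\ep|\le 1\}}e^{\alpha_\ep u_\ep^2}dx$ has positive $\liminf$, so $b_\ep/\lambda_\ep$ stays bounded and the densities $\mu_\ep := \lambda_\ep^{-1} b_\ep u_\ep e^{\alpha_\ep u_\ep^2}$ satisfy $\int_\Om|\mu_\ep|\,dx = 1$.

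The heart of the matter is to show $\bar v \equiv 0$, and for this I would prove that $v_\ep(x) \to 0$ for every $x \in \Om\setminus\{p\}$. From the Green representation $v_\ep(x) = \int_\Om G(x,y)\mu_\ep(y)\,dy$ (with $G$ the clamped-plate Green function of \eqref{eq:Greenfunction}), split the integral over $B_r(p)$ and $\Om\setminus B_r(p)$. On $\Om\setminus B_r(p)$, Lemma~\ref{tobien} together with Adams' inequality and the usual cut-off argument gives that $e^{\alpha_\ep u_\ep^2}$ is bounded in $L^s$ for some $s>1$; since $u_\ep\to 0$ in every $L^t(\Om)$ and $|G(x,\cdot)|\le C\ln(2+|x-\cdot|^{-1})$ lies in every $L^q(\Om)$, a H\"older estimate makes $\int_{\Om\setminus B_r(p)}|G(x,y)|\,|u_\ep(y)|e^{\alpha_\ep u_\ep(y)^2}dy \to 0$. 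On $B_r(p)$, since $x\ne p$ and $y\mapsto G(x,y)$ extends continuously to $\o{\Om}\setminus\{x\}$ and vanishes on $\pa\Om$, we have $\sup_{B_r(p)}|G(x,\cdot)| = o_r(1)$, hence $\int_{B_r(p)}|G(x,y)|\,|\mu_\ep(y)|\,dy \le o_r(1)\int_\Om|\mu_\ep|\,dy = o_r(1)$. Letting $\ep\to 0$ and then $r\to 0$ gives $v_\ep(x)\to 0$. Together with the uniform $H_0^{2,r}$-bound this forces $v_\ep\rightharpoonup 0$ in $H_0^{2,r}(\Om)$, and the compact embedding $H_0^{2,r}(\Om)\hookrightarrow\hookrightarrow L^2(\Om)$ (valid for $1<r<2$ in dimension four) then yields $v_\ep\to 0$ strongly in $L^2(\Om)$.

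Finally, plugging $\|v_\ep\|_2\to 0$ into the estimate $\bigl(1-\tfrac{\alpha}{\lambda_1(\Om)}\bigr)\|\Delta w_\ep\|_2\le \tfrac{\alpha}{\sqrt{\lambda_1(\Om)}}\|v_\ep\|_2$ gives $\|\Delta w_\ep\|_2\to 0$, i.e. $w_\ep\to 0$ strongly in $H_0^2(\Om)$. Hence $b_\ep u_\ep = v_\ep + w_\ep \rightharpoonup 0$ in $H_0^{2,r}(\Om)$ for every $1<r<2$, and since the limit is independent of the subsequence the whole sequence converges. I expect the only real difficulty to be the identification $\bar v\equiv 0$: one must make precise that a biharmonic potential whose source mass concentrates at a \emph{boundary} point carries no mass in the limit, the crucial input being that the clamped-plate Green function of the smooth domain $\Om$ is continuous up to $\pa\Om$ in its second variable away from the pole and vanishes there. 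Everything else is bookkeeping already performed for the interior blow-up in \S3.
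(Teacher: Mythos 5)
Your proof is correct, and the overall skeleton (reuse the decomposition $b_\ep u_\ep=v_\ep+w_\ep$ from Lemma \ref{awayp}, get the uniform $H_0^{2,r}$ bound, identify the weak limit as zero) matches the paper's. Where you genuinely diverge is in the identification step. The paper extracts a weak limit $F$ of $b_\ep u_\ep$, observes via the argument of Lemma \ref{Greenfunctionlimit} that the concentrated source is invisible to test functions supported in the open set $\Om$ (so $F$ solves the homogeneous problem $\Delta^2 F=\alpha F$ with clamped boundary data), upgrades $F$ to $C^3(\o\Om)$ by regularity, and kills it with the spectral gap $\alpha<\lambda_1(\Om)$. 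You instead kill the potential part $v_\ep$ directly and pointwise, using the fact that the clamped-plate Green function $G(x,\cdot)$ is continuous up to $\pa\Om$ away from its pole and vanishes there, so a unit mass concentrating at the boundary point $p$ produces a vanishing potential; you then dispatch $w_\ep$ with the same coercivity estimate $(1-\alpha/\lambda_1(\Om))\|\Delta w_\ep\|_2\le C\|v_\ep\|_2$ already used in Lemma \ref{awayp}. Both routes ultimately rest on the spectral gap, but your geometric input (boundary vanishing of $G$) replaces the paper's distributional one ($\de_p=0$ as a distribution on $\Om$ when $p\in\pa\Om$), and your version yields slightly more: strong $L^2$ convergence of $v_\ep$ and strong $H_0^2$ convergence of $w_\ep$, at the cost of a longer argument. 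All the supporting facts you invoke (boundedness of $b_\ep/\lambda_\ep$, the normalization $\int_\Om|\mu_\ep|\,dx=1$, the $L^s$ bound on $e^{\alpha_\ep u_\ep^2}$ away from $p$ via Lemma \ref{tobien}, and the compactness of $H_0^{2,r}(\Om)\hookrightarrow L^2(\Om)$ for $1<r<2$ in dimension four) check out.
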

\begin{proof}
By the same proof of Lemma \ref{awayp}, $b_\ep u_\ep$ is bounded in $H_0^{2,r}(\Om)$ for any $1< r < 2$. Hence there is $F\in H_0^{2,r}(\Om)$ such that $b_\ep u_\ep \rightharpoonup F$ in $H_0^{2,r}(\Om)$ and $b_\ep u_\ep \to H$ in $H_0^1(\Om)$. Using the same method in the proof of Lemma \ref{Greenfunctionlimit}, we get that $F$ solves $\Delta^2 F = \alpha F$ in $\Om$. Since $F \in H_0^{2,r}(\Om)$ for any $1< r< 2$, by the standard regularity theory, we have $F \in C^3(\o{\Om})$. However, $\alpha < \lambda_1(\Om)$, we must have $F\equiv 0$.
\end{proof}

Applying Pohozaev type identity (Lemma \ref{Pohozaev}) to equation \eqref{eq:ELequation} on the domain $\Om \cap B_r(p)$, we obtain by the same way in the estimates for $\sigma^2$ that 
\[
\lim_{\ep\to 0} \int_\Om e^{\alpha_\ep u_\ep^2} dx = |\Om|
\]
which contradicts with \eqref{eq:estimate2}. Therefore, the blow-up point $p$ can not lie on $\pa \Om$.

\section{Proof of Theorem \ref{Main1}}
Let $c_\ep, x_\ep, p$ and $A_p$ as before. We have shown in section \S3 that if blow-up occurs, i.e., $c_\ep \to \infty$ then the blow-up point $p$ lies in the interior of  $\Om$, and the supremum
\begin{equation}\label{eq:blowupcase}
\sup_{u \in H_0^2(\Om),\, \|u\|_{2,\alpha} =1} \int_\Om e^{32\pi^2 u^2}dx \leq |\Om| + \frac{\pi^2}{6} e^{\frac{5}3 + 32\pi^2 A_p}.
\end{equation}
We are in position to prove Theorem
\begin{proof}[Proof of Theorem]
If there exists a function $u_0\in H_0^2(\Om)$ such that $\|u_0\|_{2,\alpha} =1$ and
\[
\int_\Om e^{32\pi^2 u_0^2}dx =\sup_{u \in H_0^2(\Om),\, \|u\|_{2,\alpha} =1} \int_\Om e^{32\pi^2 u^2}dx,
\]
then our proof is finished. Otherwise, the blow-up case occurs, hence Theorem follows from \eqref{eq:blowupcase}.
\end{proof}
We finish this section by give a proof of Proposition \ref{imply} which shows that our inequality \eqref{eq:Maininequality} implies the one of Lu and Yang \eqref{eq:LuYang}.

\begin{proof}[Proof of Proposition \ref{imply}]
Let $a_1, a_2, \ldots,a_k$, $k\geq 1$ be the number such that $0\leq a_1 < \lam_1(\Om)$, $0\leq a_2 \leq \lam_1(\Om) a_1, \ldots, a_k\leq \lam_1(\Om) a_{k-1}$. It is easy to see that
\[
q(t) \leq 1+ a_1t + a_1 \lam_1(\Om) t^2 + \cdots + a_1 \lam_1(\Om)^{k-1} t^k.
\]
Denote $a = a_1/\lam_1(\Om) < 1$ and $p(t) = 1 + at+\cdots at^k$  then 
\[
q(t) \leq p(\lam_1(\Om) t).
\]
We claim that there exist $b \in (a,1)$ such that 
\begin{equation}\label{eq:claim**}
p(t) \leq \frac 1{1 -bt},\qquad \forall\, t\in [0,1].
\end{equation}
Indeed, this claim is equivalent to
\[
\frac{1-bt}b(1 +t +\cdots+ t^{k-1}) \leq \frac1a,\qquad \forall \, t\in [0,1].
\]
Note that
\[
\frac{1-bt}b(1 +t +\cdots+ t^{k-1}) = \frac{1-b}{b}(1+ t+ \cdots + t^{k-1}) + 1-t^k \leq k\frac{1-b}b + 1.
\]
Since $a < 1$, hence we can choose $b\in (a,1)$ such that \eqref{eq:claim**} holds.

Denote $\alpha = b\lambda_1(\Om)$ with $b$ is given in \eqref{eq:claim**}. For any $u\in H_0^2(\Om)$ such that $\|\Delta u\|_2 \leq 1$, then $\lambda_1(\Om) \|u\|_2^2 \leq 1$. By our claim \eqref{eq:claim**}, we have
\[
q(\|u\|_2^2) \leq p(\lambda_1(\Om) \|u\|_2^2) \leq \frac1{1 -\alpha \|u\|_2^2}.
\]
Let 
\[
v = \frac u{(1 -\alpha \|u\|_2^2)^{1/2}},
\]
then $\|v\|_{2,\al} \leq 1$ and $v^2 \geq q(\|u\|_2^2) u^2$. This together \eqref{eq:Maininequality} implies \eqref{eq:LuYang}.
\end{proof}

\section{Proof of Theorem \ref{Existence}}
In this section, we construct functions $\phi_\ep\in H_0^2(\Om)$ such that $\|\phi_\ep\|_{2,\alpha} =1$ and
\[
\int_{\Om} e^{32\pi^2 \phi_\ep^2} dx > |\Om| + \frac{\pi^2}{6} e^{\frac{5}3 + 32\pi^2 A_p}.
\]
This fact together \eqref{eq:upperboundblowup} shows that the blow-up case can not occur, and hence proves our Theorem.

Denote $r =|x-p|$. Recall that 
\[
G_\alpha(x,p) = -\frac1{8\pi^2} \ln r + A_p + \psi(x),\quad \psi \in C^3(\o{\Om}),\, \psi(p) =0.
\]
Following the construction in \cite{LuYang} (section \S7), let us define
\begin{equation}\label{eq:testfunction*}
\phi_\ep = 
\begin{cases}
c + \frac{a - \frac1{16\pi^2} \ln\lt(1 + \frac{\pi}{\sqrt{6}} \frac{r^2}{\ep^2}\rt)}c + \frac{A_p +\psi}c + \frac bc r^2,&\mbox{if $r \leq R\ep$,}\\
\frac1c G_\alpha&\mbox{if $r > R\ep$,}
\end{cases}
\end{equation}
where $a,b,c$ are constants determined later such that $\phi_\ep \in H_0^2(\Om)$ and $\|\phi_\ep\|_{2,\al} =1$. 

We choose $R = -\ln \ep$. To ensure that $\phi_\ep \in H_0^2(\Om)$, we choose $a,b,c$ such that
\[
\lim_{r\uparrow  R\ep} \phi_\ep = \lim_{r\downarrow R\ep} \phi_\ep,\quad \lim_{r\uparrow R\ep} \na \phi_\ep = \lim_{r\downarrow R\ep} \na \phi_\ep.
\]
The simple computation shows that
\begin{equation}\label{eq:abvalue}
\begin{cases}
a =-c^2+ \frac1{16\pi^2} \ln\lt(1 + \frac{\pi}{\sqrt{6}} R^2\rt) -\frac{\ln (R\ep)}{8\pi^2} -bR^2\ep^2, \\
b = -\frac1{16\pi^2 R^2 \ep^2\lt(1 + \frac{\pi}{\sqrt{6}} R^2\rt)}.\\
\end{cases}
\end{equation}
It was computed in \cite{LuYang} that
\[
\|\Delta \phi_\ep\|_2^2 = \frac1{16 \pi^2 c^2} \lt(\ln \frac\pi{\sqrt{6}\ep^2} + 16\pi^2 A_p -\frac 56\rt) + \frac{\al}{c^2} \|G_\al\|_2^2 + O\lt(\frac1{c^2 \ln^2 \ep}\rt).
\]
We have
\begin{align*}
\int\limits_{\Om\setminus B_{Rr_\ep}(p)} \phi_\ep^2 dx &=\frac1{c^2} \int\limits_{\Om\setminus B_{Rr_\ep}(p)} G_\alpha^2 dx = \frac1{c^2} \|G_\alpha\|_2^2 -\frac1{c^2}\int\limits_{B_{Rr_\ep}(p)} G_\al^2 dx = \frac1{c^2} \|G_\alpha\|_2^2 + \frac{O(\ep^4 \ln^6(\ep))}{c^2}.
\end{align*}
On $B_{R\ep}(p)$ we have
\begin{align*}
\phi_\ep(x) &= \frac1{16\pi^2c}\lt(\ln\lt(1 + \frac{\pi}{\sqrt{6}} R^2\rt) -\ln\lt(1 + \frac{\pi}{\sqrt{6}} \frac{r^2}{\ep^2}\rt)\rt) -\frac{\ln(R\ep)}{8\pi^2 c}\\
&\qquad\qquad + \frac{A_p +\psi}c -\frac bc R^2\ep^2\lt(1 -\frac{r^2}{R^2\ep^2}\rt),
\end{align*}
hence
\[
\int_{B_{R\ep}(p)} \phi_\ep^2 dx = \frac1{c^2} O(\ep^4 \ln^6 \ep).
\]
Combining all these estimates together, we get
\[
\|\phi_\ep\|_{2,\alpha}^2 = \frac1{16 \pi^2 c^2} \lt(\ln \frac\pi{\sqrt{6}\ep^2} + 16\pi^2 A_p -\frac 56\rt) + O\lt(\frac1{c^2 \ln^2 \ep}\rt).
\]
Thus we can choose $c$ such that $\|\phi_\ep\|_{2,\al} =1$ for $\ep$ small enough. Moreover, we have
\begin{equation}\label{eq:cvalue}
c^2 = \frac1{16 \pi^2} \lt(\ln \frac\pi{\sqrt{6}\ep^2} + 16\pi^2 A_p -\frac 56\rt) + O\lt(\frac1{\ln^2 \ep}\rt).
\end{equation}
We next compute $\int_\Om e^{32\pi^2 \phi_\ep^2}dx$. On $\Om\setminus B_{R\ep}(p)$ we have
\begin{align}\label{eq:AAA}
\int_{\Om\setminus B_{R\ep}(p)} e^{32\pi^2 \phi_\ep^2}dx &\geq \int_{\Om\setminus B_{R\ep}(p)} \lt(1 + \frac{32\pi^2}{c^2} G_\alpha^2\rt) dx\notag\\
&=|\Om| + \frac{32\pi^2}{c^2} \|G_\al\|_2^2 + O\lt(\frac1{\ln^2 \ep}\rt).
\end{align}
On $B_{R\ep}(p)$, using \eqref{eq:abvalue} and \eqref{eq:cvalue} we have
\begin{align*}
\phi_\ep^2 &\geq c^2 + 2\lt(a - \frac1{16\pi^2} \ln\lt(1 + \frac{\pi}{\sqrt{6}} \frac{r^2}{\ep^2}\rt) + A_p +\psi+ br^2\rt)\\
&=-c^2 + 2(a+c^2) -\frac1{8\pi^2} \ln\lt(1 + \frac{\pi}{\sqrt{6}} \frac{r^2}{\ep^2}\rt) + 2A_p + 2\psi + 2b r^2\\
&= -\frac1{16 \pi^2} \ln \frac\pi{\sqrt{6}\ep^2} +\frac 5{96\pi^2} +\frac1{8\pi^2} \ln\lt(1 + \frac{\pi}{\sqrt{6}} R^2\rt) -\frac{\ln (R\ep)}{4\pi^2}\\
&\qquad -\frac1{8\pi^2} \ln\lt(1 + \frac{\pi}{\sqrt{6}} \frac{r^2}{\ep^2}\rt) + A_p + O\lt(\frac1{\ln^2 \ep}\rt),
\end{align*}
here we use the fact $\psi (p) =0$, hence $\psi = O\lt(\frac1{\ln^2 \ep}\rt)$ on $B_{R\ep}(p)$ since $R = -\ln \ep$ and also $br^2 = O\lt(\frac1{\ln^2 \ep}\rt)$ on $B_{R\ep}(p)$. Hence, on $B_{R\ep}(p)$, we have
\begin{align*}
e^{32\pi^2 \phi_\ep^2} & \geq \lt(\frac{\pi^2}{6\ep^4}\rt)^{-1}e^{\frac 53 + 32\pi^2 A_p}\lt(1 + \frac{\pi}{\sqrt{6}} R^2\rt)^4 (R\ep)^{-8}\lt(1 + \frac{\pi}{\sqrt{6}} \frac{r^2}{\ep^2}\rt)^{-4} \lt(1+ O\lt(\frac1{\ln^2 \ep}\rt)\rt)\\
&= \frac{\pi^2}{6} e^{\frac 53 + 32\pi^2 A_p} \ep^{-4} \lt(1 + \frac{\pi}{\sqrt{6}} \frac{r^2}{\ep^2}\rt)^{-4}\lt(1+ O\lt(\frac1{\ln^2 \ep}\rt)\rt),
\end{align*}
since $R = -\ln \ep$. Integrating on $B_{R\ep}(p)$ and using a suitable change of variable, we get
\begin{align*}
\int_{B_{R\ep}(p)} e^{32\pi^2 \phi_\ep^2} dx &\geq \lt(1+ O\lt(\frac1{\ln^2 \ep}\rt)\rt)e^{\frac 53 + 32\pi^2 A_p} \int_{B_{\o{R}}(0)} (1 + |x|^2)^{-4} dx.
\end{align*}
with $\o R = \pi^{1/2} R/ 6^{1/4}$. Using polar coordinate we get
\begin{align*}
\int_{B_{\o{R}}(0)} (1 + |x|^2)^{-4} dx &= 2\pi^2 \int_0^{\o R} \frac{r^3}{(1+ r^2)^4} dr\\
&=\pi^2 \int_0^{\o R^2} \frac{r}{(1+r)^4} dr\\
&=\pi^2 \lt(\frac16 -\frac{1}{2(1+ \o R^2)^2} + \frac1{3(1+\o R^2)^3}\rt)\\
&=\frac{\pi^2}6 \lt(1 + O\lt(\frac1{\ln^4 \ep}\rt)\rt).
\end{align*}
Finally, we have
\begin{equation}\label{eq:AAAA}
\int_{B_{R\ep}(p)} e^{32\pi^2 \phi_\ep^2} dx \geq \frac{\pi^2}6 e^{\frac 53 + 32\pi^2 A_p} + O\lt(\frac1{\ln^2 \ep}\rt).
\end{equation}
Combining \eqref{eq:AAA} together \eqref{eq:AAAA} we obtain
\[
\int_\Om e^{32\pi^2 \phi_\ep^2} dx \geq |\Om| + \frac{\pi^2}6 e^{\frac 53 + 32\pi^2 A_p} + \frac{32\pi^2}{c^2} \|G_\al\|_2^2 + O\lt(\frac1{\ln^2 \ep}\rt).
\]
This together \eqref{eq:cvalue} imply that for $\ep$ is sufficiently small 
\[
\int_\Om e^{32\pi^2 \phi_\ep^2} dx > |\Om| + \frac{\pi^2}6 e^{\frac 53 + 32\pi^2 A_p},
\]
as our desire.

\section*{Acknowledgments}
This work is supported by CIMI postdoctoral research fellowship.

\end{document}